\newcommand{\al}{\alpha}
\newcommand{\be}{\beta}
\newcommand{\ga}{\gamma}
\newcommand{\de}{\delta}
\newcommand{\eps}{\varepsilon}
\newcommand{\bx}{\bar x}
\newcommand{\by}{\bar y}
\newcommand{\iv}{^{-1} }
\newcommand{\Real}{\mathbb R}
\newcommand {\R} {\mathbb R}
\newcommand {\N} {\mathbb N}
\newcommand {\B} {\mathbb B}
\newcommand {\Sp} {\mathbb S}
\newcommand {\gph} {{\rm gph}\,}%Graph
\newcommand {\dom} {{\rm dom}\,}
\newcommand {\Er} {{\rm Er}\,}
\newcommand {\sd} {\partial}
\renewcommand{\iff}{$ \Leftrightarrow\ $}%iff
\newcommand{\folgt}{$ \Rightarrow\ $}
\newcommand{\ds}{\displaystyle}
\newcommand{\lsc}{lower semicontinuous}
\newcommand{\RHS}{right-hand side}
\newcommand{\SVM}{set-valued mapping}
\newcommand{\norm}[1]{\left\Vert#1\right\Vert}
\newcommand{\set}[1]{\left\{#1\right\}}
\newcounter{mycount}
\newcommand{\cnta}{\setcounter{mycount}{\value{enumi}}}
\newcommand{\cntb}{\setcounter{enumi}{\value{mycount}}}
\newcommand{\sr}{{}^sr}
\newcommand{\smartqedtr}{\newcommand{\qedtr}{\ifmmode\triangle\else{\unskip\nobreak\hfil
\penalty50\hskip1em\null\nobreak\hfil$\triangle$
\parfillskip=0pt\finalhyphendemerits=0\endgraf}\fi}}
\renewcommand {\theenumi} {\roman{enumi}}
\begin{document}

\title{Error Bounds and H\"older Metric Subregularity
\thanks{The research was supported by the Australian Research Council, project DP110102011.}}

\author{Alexander Y. Kruger}
\institute{A. Y.~Kruger \at
Centre for Informatics and Applied Optimization, Faculty of Science and Technology, Federation University Australia, POB 663, Ballarat, Vic, 3350, Australia\\
\email{a.kruger@federation.edu.au} }

\date{Received: date / Accepted: date}

%\journalname{J Optim Theory Appl}

\dedication{Dedicated to Professor Lionel Thibault}
\maketitle

% ----------------------------------------------------------------
\begin{abstract}
The H\"older setting of the metric subregularity property of set-valued mappings between general metric or Banach/Asplund spaces is investigated in the framework of the theory of error bounds for extended real-valued functions of two variables.
A classification scheme for the general H\"older metric subregularity criteria is presented.
The criteria are formulated in terms of several kinds of primal and subdifferential slopes.

\keywords{
error bounds; slope; metric regularity; metric subregularity; H\"older metric subregularity}

\subclass{
49J52; 49J53; 58C06; 47H04; 54C60}
\end{abstract}
%\thispagestyle{empty}

% ----------------------------------------------------------------
%\tableofcontents
\section{Introduction}

In this article, we investigate the H\"older setting of the metric subregularity property of set-valued mappings between general metric or Banach/Asplund spaces in the framework of the theory of error bounds for extended real-valued functions of two variables developed in \cite{Kru15}.
Several primal and dual space local quantitative and qualitative criteria of H\"older metric subregularity are formulated.
The relationships between the criteria are established and illustrated.

Recall that an extended-real-valued function $f:X\to\Real_\infty:=\R\cup\{+\infty\}$ on a metric space $X$ is said to have a local \emph{error bound} (cf., e.g., \cite{Aze03,AzeCor04,FabHenKruOut10,Iof79}) with constant $\tau>0$ at a point $\bar{x}\in X$ with $f(\bx)=0$ if there exists a neighbourhood $U$ of $\bx$ such that
\begin{equation*}\label{el+}
{\tau}d(x,S(f)) \le f_+(x)\quad \mbox{ for all } x \in U.
\end{equation*}
Here $S(f)$ stands for the lower $0$-level set $\{x\in X\mid f(x)\le0\}$ and $f_+(x):=\max\{f(x),0\}$.

A set-valued mapping
$F:X\rightrightarrows Y$ is a mapping which
assigns to every $x\in X$ a subset (possibly empty) $F(x)$ of $Y$.
We use the notation
$$\gph F:=\{(x,y)\in X\times Y\mid
y\in F(x)\}$$
for the graph of $F$ and $F\iv : Y\rightrightarrows
X$ for the inverse of $F$.
This inverse (which always exists) is defined by
$$F\iv(y) :=\{x\in X\mid y\in F(x)\},\quad y\in Y,$$
and satisfies
$$(x,y)\in\gph F \quad\Leftrightarrow\quad (y,x)\in\gph F\iv.$$

A set-valued mapping $F:X\rightrightarrows Y$ between metric spaces is called (locally) \emph{metrically subregular} (cf., e.g., \cite{Mor06.1,RocWet98,DonRoc09,Pen13}) at a point $(\bx,\by)\in\gph F$ with constant $\tau>0$ if there exist neighbourhoods $U$ of $\bx$ and $V$ of $\by$ such that
\begin{equation*}
\tau d(x,F^{-1}(\by))\le d(\by,F(x)\cap V) \quad \mbox{ for all } x \in U.
\end{equation*}

This property represents a weaker version of the more robust \emph{metric regularity} property which corresponds to replacing $\by$ in the above inequality by an arbitrary (not fixed!) $y\in V$.

It is well known that one can always take $V=Y$ in the definition of metric subregularity, which is, thus, equivalent to the existence of a neighbourhood $U$ of $\bx$
such that
\begin{equation}\label{MR0++}
\tau d(x,F^{-1}(\by))\le d(\by,F(x)) \quad \mbox{ for all } x \in U.
\end{equation}

The metric subregularity (and closely related to it \emph{calmness}) property plays an important role in both theory and applications.
The amount of publications devoted to (mostly sufficient) criteria of metric subregularity is huge.
The interested reader is referred to \cite{HenOut01,HenJou02,HenJouOut02,DonRoc04, ZheNg07,IofOut08,DonRoc09, Lev09,Pen10,ZheNg10,Gfr11,Gfr13,Gfr13.2,ZheOuy11,ZheNg12, ApeDurStr13,NgaiTin}.

In many important situations, the standard metric (sub)regularity property is not satisfied, and more subtle H\"older type estimates come into play.

If instead of \eqref{MR0++} one uses the following more general condition:
\begin{equation}\label{02}
\tau d(x,F^{-1}(\by))\le (d(\by,F(x)))^q\quad \mbox{for all } x\in U,
\end{equation}
where $q\in(0,1]$, then the corresponding property is usually referred to as \emph{H\"older metric subregularity} of order $q$ at $(\bx,\by)$ with constant $\tau$.
The case $q=1$ corresponds to the conventional metric subregularity.
If $q_1<q_2\le1$, then H\"older metric subregularity of order $q_2$ implies that of order $q_2$.

If fixed $\by$ in the above inequality is replaced by an arbitrary $y$ and the inequality is required to hold uniformly over all $y$ near $\by$, then we arrive at the definition of \emph{H\"older metric regularity} of order $q$.
The latter property and even more general nonlinear regularity models have been studied since 1980s; cf. \cite{FraQui12,YenYaoKie08,BorZhu88, Fra87,Fra89,Pen89,Iof13}.

The history of the H\"older metric subregularity property seems to be significantly shorter with most work done in the last few years; cf. \cite{GayGeoJea11,MorOuy,NgaiTronThe3, KlaKruKum12,Kum09,Kla94,LiMor12}.
Note the only attempt so far to consider the case $q>1$ in \cite{MorOuy}.
Nevertheless, the H\"older or more general nonlinear estimates of metric subregularity/error bounds have proved to be important in sensitivity analysis and quantifying linear/sublinear convergence rates for the proximal point and alternating projection
algorithms in optimization and variational inequalities \cite{AttBol09,LiNg09, AttBolRedSou10,LiTanWei10,LiMor12,BorLiYao14,DruIofLew14, NolRon}.

One can easily see that H\"older metric subregularity property \eqref{02} is equivalent to the local error bound property of the extended real-valued function $x\mapsto(d(\by,F(x)))^q$ at $\bx$ (with the same constant).
So one might want to apply to this model the well developed theory of error bounds; cf. \cite{Aze03,AzeCor04,CorMot08,IofOut08, NgaiThe04,NgaiThe08,Pang97, FabHenKruOut10,FabHenKruOut12}.
However, most of the error bound criteria are formulated for \lsc\ functions (cf. Section~\ref{S1}), while the function $x\mapsto(d(\by,F(x)))^q$ can fail to be \lsc\ even when $\gph F$ is closed.

Another helpful observation is that property \eqref{02} can be rewritten equivalently as
\begin{equation*}
\tau d(x,F^{-1}(\by))\le (d(y,\by))^q\quad \mbox{for all } x\in U,\; y\in F(x).
\end{equation*}
This allows one to apply to it the extension of the theory of local error bounds to functions of two variables developed in \cite{Kru15} and used there for characterizing the conventional metric subregularity (case $q=1$).

Following the standard trend initiated by Ioffe \cite{Iof00_}, criteria for error bounds and H\"older metric subregularity of set-valued mappings in metric spaces are formulated in terms of (strong) \emph{slopes} \cite{DMT}.
To simplify the statements in metric and also Banach/Asplund spaces, several other kinds of primal and dual space slopes for real-valued functions and set-valued mappings are discussed in this article and the relationships between them are formulated.
These relationships lead to a simple hierarchy of the error bound and H\"older metric subregularity criteria.

Some statements in the article look rather long because each of them contains an almost complete list of criteria applicable in the situation under consideration.
The reader is not expected to read through the whole list.
Instead, they can select a particular criterion or a group of criteria corresponding to the setting of interest to them (e.g., in metric or Banach/Asplund/smooth spaces, in the convex case, etc.)

The H\"older metric subregularity criteria in terms of primal and dual space limiting derivative-like objects (slopes), while providing a comprehensive theoretical picture of the subject, are not easy to verify in practical problems.
The next natural step (although beyond the scope of the current article) would be to consider classes of mappings corresponding to problems with special structure (semi-algebraic, sub-analytic, etc.) for which computation of the corresponding slopes can be relatively simple.
Such classes can still be sufficiently general to cover important problems arising in applications.
Their regularity properties have attracted recently considerable attention of researchers; cf. \cite{BolDanLew09,Iof09,LiNg09,AttBolSva13,BorLiYao14, DruIofLew14,DruIofLew15,LiMorPha,NolRon}.

The theory of local error bounds of functions of two variables \cite{Kru15}, which is the main tool in the current study, is applicable to more general than H\"older metric subregularity nonlinear models.
This topic is going to be covered in the subsequent article \cite{Kru15.3}.
The author believes that the model developed in \cite{Kru15} can be extended to cover more subtle properties of directional error bounds and directional (H\"older) metric subregularity; cf. \cite{Gfr13,Gfr13.2,NgaiTronThe4,NgaiTronPha}.

Our basic notation is standard, cf. \cite{Mor06.1,RocWet98,DonRoc09,Pen13}.
Depending on the context, $X$ and $Y$ are either metric or normed
spaces.
Metrics in all spaces are denoted by the same symbol
$d(\cdot,\cdot)$;
$d(x,A):=\inf_{a\in{A}}d(x,a)$ is the point-to-set distance from
$x$ to $A$.
$B_\de(x)$ denotes the closed ball with radius $\de$ and centre $x$.
If not specified otherwise, the
product of metric/normed spaces is assumed equipped with the distance/norm given by the maximum of the distances/norms.

If $X$ and $Y$ are normed spaces, their topological duals are denoted $X^*$ and $Y^*$, respectively, while $\langle\cdot,\cdot\rangle$ denotes the bilinear form defining the pairing between the spaces.
The closed unit balls in a normed space and its dual are denoted by $\B$ and $\B^*$, respectively, while $\Sp$ and $\Sp^*$ stand for the unit spheres.

We say that a subset $\Omega$ of a metric space is locally closed near $\bar{x}\in\Omega$ if $\Omega\cap{U}$ is closed for some closed neighbourhood $U$ of $\bar{x}$.
Given an $\al\in\R_\infty$, $\alpha_+$ denotes its ``positive'' part:
$\alpha_+:=\max\{\alpha,0\}$.

If $X$ is a normed linear space, $f:X\to\Real_\infty$, $x\in{X}$, and $f(x)<\infty$, then
\begin{gather}
\partial{f}(x) := \left\{x^\ast\in X^\ast\bigl|\bigr.\;
\liminf\limits_{u\to x,\,u\ne x}
\frac{f(u)-f(x)-\langle{x}^\ast,u-x\rangle}
{\norm{u-x}} \ge 0 \right\}\label{Frsd}
\end{gather}
is the \emph{Fr\'echet subdifferential} of $f$ at $x$.
Similarly, if $x\in\Omega\subset X$, then
\begin{gather}
N_{\Omega}(x) := \left\{x^\ast\in X^\ast\bigl|\bigr.\
\limsup_{u\to x,\,u\in\Omega\setminus\{x\}} \frac {\langle x^\ast,u-x\rangle}
{\|u-x\|} \le 0 \right\}\label{Fr}
\end{gather}
is the \emph{Fr\'echet normal cone} to $\Omega$ at $x$.
In the convex case, sets \eqref{Frsd} and \eqref{Fr} reduce to the subdifferential and normal cone in the sense of convex analysis, respectively.
If $f(x)=\infty$ or $x\notin\Omega$, we set, respectively, $\partial{f}(x)=\emptyset$ or $N_{\Omega}(x)=\emptyset$.
Observe that definitions \eqref{Frsd} and \eqref{Fr} are invariant on the renorming of the space (replacing the norm by an equivalent one).

If $F:X\rightrightarrows Y$ is a set-valued mapping between normed linear spaces and $(x,y)\in\gph F$, then
 \[
D^{*}F(x,y)(y^{*}):=\set{x^{*}\in X^*\mid (x^{*},-y^{*})\in N_{\gph F}(x,y)},\quad y^{*}\in X^*
 \]
is the {\em Fr\'echet coderivative} of $F$ at $(x,y)$.

Several kinds of \emph{subdifferential sum rules} are employed in the article.
Below we provide these rules for completeness.

\begin{lemma}[Subdifferential sum rules] \label{l02}
Suppose $X$ is a normed linear space, $f_1,f_2:X\to\R_\infty$, and $\bx\in\dom f_1\cap\dom f_2$.

{\rm (i) \bf Fuzzy sum rule}. Suppose $X$ is Asplund,
$f_1$ is Lipschitz continuous and
$f_2$
is lower semicontinuous in a neighbourhood of $\bar x$.
Then, for any $\varepsilon>0$, there exist $x_1,x_2\in X$ with $\|x_i-\bar x\|<\varepsilon$, $|f_i(x_i)-f_i(\bar x)|<\varepsilon$ $(i=1,2)$ such that
$$
\partial (f_1+f_2) (\bar x) \subset \partial f_1(x_1) +\partial f_2(x_2) + \varepsilon\B^\ast.
$$

{\rm (ii) \bf Differentiable sum rule}. Suppose
$f_1$ is Fr\'echet differentiable at $\bx$.
Then,
$$
\partial (f_1+f_2) (\bar x) = \nabla f_1(\bx) +\partial f_2(\bx).
$$

{\rm (iii) \bf Convex sum rule}. Suppose
$f_1$ and $f_2$ are convex and $f_1$ is continuous at a point in $\dom f_2$.
Then,
$$
\partial (f_1+f_2) (\bar x) = \sd f_1(\bx) +\partial f_2(\bx).
$$
\end{lemma}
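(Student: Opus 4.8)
All three rules are classical; the plan is to reduce each to a familiar tool and, since full details are available in the standard references, to give only sketches here. For part~(ii) I would argue directly from definition~\eqref{Frsd}: Fr\'echet differentiability of $f_1$ at $\bx$ yields $f_1(u)-f_1(\bx)-\ang{\nabla f_1(\bx),u-\bx}=o(\norm{u-\bx})$ as $u\to\bx$, so inserting this into the difference quotient defining $\partial(f_1+f_2)(\bx)$ shows, for every $x^*\in X^*$, that $x^*\in\partial(f_1+f_2)(\bx)$ if and only if $x^*-\nabla f_1(\bx)\in\partial f_2(\bx)$, which is the claimed equality; there is no obstacle here. For part~(iii), the Moreau--Rockafellar theorem (see, e.g., \cite{RocWet98}), the inclusion ``$\supset$'' is immediate from the two subgradient inequalities, and for ``$\subset$'' I would fix $x^*\in\partial(f_1+f_2)(\bx)$, subtract the affine function $u\mapsto f_1(\bx)+\ang{x^*,u-\bx}$ from $f_1$ to reduce to the case $x^*=0$, $f_1(\bx)=0$ (so that $f_1\ge f_2(\bx)-f_2$ on $X$), and separate by Hahn--Banach the convex sets $\epi f_1$ and $\set{(u,t)\in X\times\R\mid t\le f_2(\bx)-f_2(u)}$; continuity of $f_1$ at a point of $\dom f_2$ makes $\Int\epi f_1$ nonempty and keeps the two interiors disjoint, and the separating functional gives the decomposition $0=x_1^*+x_2^*$ with $x_i^*\in\partial f_i(\bx)$.

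The substantive item is~(i), the Fr\'echet fuzzy sum rule in Asplund spaces, for which the plan is the now-standard argument based on the Borwein--Preiss smooth variational principle (invoked after separable reduction); see \cite{Mor06.1}. After a translation one may assume $0\in\partial(f_1+f_2)(\bx)$, so that $\bx$ locally minimizes $x\mapsto(f_1+f_2)(x)+o(\norm{x-\bx})$. Given $\eps>0$, I would pass to the product space $X\times X$ (which is again Asplund), restrict to a small closed ball around $(\bx,\bx)$, and add to the decoupled function $(u,v)\mapsto f_1(u)+f_2(v)$ a small perturbation --- smooth with respect to a suitable gauge --- that both ties $u$ to $v$ and makes $(\bx,\bx)$ a strict approximate minimum; the variational principle then produces an exact local minimizer $(x_1,x_2)$ of the perturbed function, close to $(\bx,\bx)$. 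At that point, applying the smooth sum rule of part~(ii) in each variable and reading off the resulting optimality condition would give $\partial(f_1+f_2)(\bx)\subset\partial f_1(x_1)+\partial f_2(x_2)+\eps\B^*$, the slack $\eps\B^*$ absorbing the gradient of the perturbation.

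The hard part, as always with fuzzy calculus, is not any single idea but the bookkeeping: the radius of the ball and the parameters of both the perturbation and the variational principle must be tuned in the right order so that one simultaneously gets $\norm{x_i-\bx}<\eps$ and $\abs{f_i(x_i)-f_i(\bx)}<\eps$ for $i=1,2$. The position estimates come out of the variational principle; the value estimate for $i=1$ should then follow from Lipschitz continuity of $f_1$ (controlling $\abs{f_1(x_1)-f_1(\bx)}$ by $\norm{x_1-\bx}$), and for $i=2$ from lower semicontinuity of $f_2$ together with the near-minimality furnished by the variational principle --- an upper bound on $f_2(x_2)$ being automatic, the matching lower bound coming from lower semicontinuity. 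Since all of this is routine and spelled out in the cited references, I would not reproduce it.
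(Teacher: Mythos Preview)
Your proposal is fine, but note that the paper does not actually prove this lemma: it is stated as a collection of known results, with references given in the paragraph immediately following the statement (Fabian~\cite{Fab89} and \cite[Theorem~2.33]{Mor06.1} for~(i), \cite[Proposition~1.107]{Mor06.1} for~(ii), and \cite[Theorem~2.8.7]{Zal02} for~(iii)). So there is no ``paper's own proof'' to compare against --- the paper's treatment is precisely what you do in your last sentence, namely defer to the literature.

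Your sketches are the standard arguments and are correct in outline. For~(ii), the direct computation from definition~\eqref{Frsd} is exactly right. For~(iii), your Hahn--Banach separation of $\epi f_1$ from the hypograph-type set is the classical Moreau--Rockafellar route. For~(i), your decoupling-plus-variational-principle scheme is the Fabian/Borwein--Preiss argument underlying \cite[Theorem~2.33]{Mor06.1}; one small remark: as the lemma is phrased, the points $x_1,x_2$ are asserted to work for the whole set $\partial(f_1+f_2)(\bx)$ at once, whereas your reduction ``assume $0\in\partial(f_1+f_2)(\bx)$'' produces $x_1,x_2$ depending on the chosen subgradient. In practice the rule is always applied to a single $x^*$ at a time, and this is how the cited references state and prove it, so the distinction is harmless here.
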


The first sum rule in the lemma above is known as the \emph{fuzzy} or \emph{approximate} sum rule (Fabian \cite{Fab89}; cf., e.g., \cite[Rule~2.2]{Kru03.1}, \cite[Theorem~2.33]{Mor06.1}) for Fr\'echet subdifferentials in Asplund spaces.
The other two are examples of \emph{exact} sum rules.
They are valid in arbitrary normed spaces (or even locally convex spaces in the case of the last rule).
Rule (ii) can be found, e.g., in \cite[Corollary~1.12.2]{Kru03.1} and \cite[Proposition~1.107]{Mor06.1}.
For rule (iii) we refer the readers to \cite[Theorem~0.3.3]{IofTik79} and
\cite[Theorem~2.8.7]{Zal02}.

Recall that a Banach space is \emph{Asplund} if the dual of each its separable subspace is separable; see, e.g., \cite{Mor06.1,BorZhu05} for discussions and characterizations of Asplund spaces.
Note that any \emph{Fr\'echet smooth} space, i.e., a Banach space which admits an equivalent norm, Fr\'echet differentiable at all nonzero points, is
Asplund.
Given a Fr\'echet smooth space, we will always assume that it is endowed with such a norm.

The (normalized) \emph{duality mapping} $J$ between a normed space $Y$ and its dual $Y^*$ is defined as (cf. \cite[Definition~3.2.6]{Lucc06})
\begin{gather}\label{J}
J(y):=\left\{y^*\in \mathbb{S}_{Y^*}\mid \langle y^*,y\rangle=\norm{y}\right\},\quad y\in Y.
\end{gather}
Note that $J(-y)=-J(y)$.
The following simple fact of convex analysis is well known (cf., e.g., \cite[Corollary~2.4.16]{Zal02}).

\begin{lemma}\label{ll01}
Let $(Y,\|\cdot\|)$ be a normed space.
\begin{enumerate}
\item
$\sd\|\cdot\|(y)=J(y)$ for any $y\ne0$.
\item
$\sd\|\cdot\|(0)=\B^*$.
\end{enumerate}
\end{lemma}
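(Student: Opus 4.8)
The plan is to reduce everything to the definition of the subdifferential of convex analysis. Since $\|\cdot\|$ is convex and finite-valued, the remark made right after \eqref{Fr} applies, so that $x^\ast\in\sd\|\cdot\|(y)$ if and only if the subgradient inequality
$\|u\|-\|y\|\ge\langle x^\ast,u-y\rangle$
holds for every $u\in Y$. Both parts are then extracted from this inequality by elementary substitutions, together with the elementary observation that ``$\langle x^\ast,u\rangle\le\|u\|$ for all $u$'' is the same as ``$\|x^\ast\|\le1$'' (apply it to $u$ and to $-u$).

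For part (ii) I would simply set $y=0$: the subgradient inequality becomes $\langle x^\ast,u\rangle\le\|u\|$ for all $u\in Y$, which by the observation above is exactly $\|x^\ast\|\le1$, i.e.\ $x^\ast\in\B^\ast$. Hence $\sd\|\cdot\|(0)=\B^\ast$.

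For part (i), fix $y\ne0$ and take $x^\ast\in\sd\|\cdot\|(y)$. Testing the subgradient inequality at $u=0$ gives $\langle x^\ast,y\rangle\ge\|y\|$, and testing at $u=2y$ gives $\langle x^\ast,y\rangle\le\|y\|$; hence $\langle x^\ast,y\rangle=\|y\|$. Feeding this back into the subgradient inequality leaves $\langle x^\ast,u\rangle\le\|u\|$ for all $u$, i.e.\ $\|x^\ast\|\le1$; since $\langle x^\ast,y/\|y\|\rangle=1$ with $y/\|y\|$ a unit vector, in fact $\|x^\ast\|=1$. Thus $x^\ast\in J(y)$ by \eqref{J}. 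The reverse inclusion is a one-line estimate: if $x^\ast\in J(y)$, then for all $u$,
$\langle x^\ast,u-y\rangle=\langle x^\ast,u\rangle-\|y\|\le\|x^\ast\|\,\|u\|-\|y\|=\|u\|-\|y\|$,
so $x^\ast\in\sd\|\cdot\|(y)$.

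I do not expect any genuine difficulty here; the lemma is a standard fact of convex analysis, invoked only for notational convenience later. The one place demanding a moment's attention — and the only place where the hypothesis $y\ne0$ is used, which is precisely what separates part (i) from part (ii) — is the step upgrading $\|x^\ast\|\le1$ to $\|x^\ast\|=1$, where one divides by $\|y\|$.
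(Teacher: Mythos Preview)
Your proof is correct and complete. The paper does not actually supply a proof of this lemma; it simply records it as a well-known fact of convex analysis with a reference to \cite[Corollary~2.4.16]{Zal02}. Your argument is the standard elementary derivation from the convex subgradient inequality, and every step checks out---including the one point you flag, namely the division by $\|y\|$ needed to upgrade $\|x^\ast\|\le1$ to $\|x^\ast\|=1$, which is exactly where the hypothesis $y\ne0$ enters and what distinguishes (i) from (ii).
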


The structure of the article is as follows.
In the next section, we present a survey and some extensions of the error bound criteria for a special family of extended-real-valued functions on products of either metric or Banach/Asplund spaces.
The criteria are formulated in terms of several kinds of primal and subdifferential slopes.
The relationships between the slopes are presented.
In Section~\ref{S5}, we demonstrate how H\"older metric subregularity of \SVM s can be treated in the framework of the theory of error bounds.
A collection of slopes for a \SVM\ are discussed.
The final Section~\ref{Criteria} is dedicated to primal and subdifferential criteria for H\"older metric subregularity.

% ----------------------------------------------------------------
\section{Error bounds and slopes}\label{S1}
In this section, we discuss a general model of error bounds for an extended-real-valued function $f:X\times Y\to\R_\infty$ on a product of metric spaces considered in \cite{Kru15} and recall several error bound criteria in terms of (several kinds of) slopes.

It is assumed that $f(\bx,\by)=0$, and $f$ depends on its second variable in a special way:
\begin{itemize}
\item [(P1)]
$f(x,y)>0$ if $y\ne\by$,
\item [(P2)]
$\ds\liminf_{f(x,y)\downarrow0}\frac{f(x,y)}{d(y,\by)}>0$.
\end{itemize}
In particular, condition $f(x,y)\downarrow0$ implies that $y\to\by$.

Function $f$ is said to have an \emph{error bound} with respect to $x$ at $(\bx,\by)$ with constant $\tau>0$ if there exists a neighbourhood $U$ of $\bx$ such that
%\marg{$\tau$ can be included in $f$}
\begin{equation}\label{eb}
\tau d(x,S(f)) \le f_+(x,y)\quad \mbox{for all } x\in U,\; y\in Y,
\end{equation}
where
$$S(f):=\{x\in X\mid f(x,\by)\le0\}=\{x\in X|\ f(x,y)\le0 \mbox{ for some } y\in Y\}.$$
The second equality is a consequence of (P1).

The \emph{error bound modulus}
\begin{gather}\label{rr2}
\Er f(\bx,\by):=
\liminf_{\substack{x\to\bx\\f(x,y)>0}} \frac{f(x,y)}{d(x,S(f))}
\end{gather}
coincides with the exact upper bound of all $\tau>0$ such that \eqref{eb} holds true for some neighbourhood $U$ of $\bx$ and provides a quantitative characterization of the \emph{error bound property}.

Observe that the case of a function $f:X\to\R_\infty$ of a single variable can be covered by considering its extension $\tilde f:X\times Y\to\R_\infty$ defined, for some $\by\in Y$, by
\begin{equation*}\label{fti}
\tilde f(x,y)=
\begin{cases}
f(x) &\mbox{if } y=\by,
\\
\infty &\mbox{otherwise}.
\end{cases}
\end{equation*}
Conditions (P1) and (P2) are obviously satisfied.

Definition \eqref{rr2} admits the following equivalent representations.
\begin{proposition}\label{pr-f}
$\ds\Er f(\bx,\by)=
\liminf_{\substack{x\to\bx,\,y\to\by\\f(x,y)>0}} \frac{f(x,y)}{d(x,S(f))}=
\liminf_{\substack{x\to\bx,\,f(x,y)\downarrow0}} \frac{f(x,y)}{d(x,S(f))}$.
\end{proposition}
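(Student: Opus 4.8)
The plan is to prove two inequalities in each direction, exploiting that the three expressions differ only in the constraint region over which the $\liminf$ is taken. Write $L_1 := \Er f(\bx,\by)$ (the $\liminf$ over $x\to\bx$, $f(x,y)>0$), $L_2$ (the $\liminf$ over $x\to\bx$, $y\to\by$, $f(x,y)>0$), and $L_3$ (the $\liminf$ over $x\to\bx$, $f(x,y)\downarrow0$). The region defining $L_3$ is contained in that defining $L_2$, which is contained in that defining $L_1$; since a $\liminf$ over a smaller region is no smaller, we get the chain $L_1 \le L_2 \le L_3$ for free. The real content is the reverse chain $L_3 \le L_1$, i.e.\ $L_3\le L_2$ and $L_3\le L_1$, and for this the key is property (P2).

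First I would dispose of the trivial case $S(f)=X$ (or more precisely, the case where the relevant $\liminf$ is $+\infty$ because there are no admissible points, or $d(x,S(f))=0$ identically near $\bx$); in the genuine case one argues as follows. Take any sequence $(x_k,y_k)$ with $x_k\to\bx$ and $f(x_k,y_k)>0$ along which the ratio $f(x_k,y_k)/d(x_k,S(f))$ converges to $L_1$. The claim is that necessarily $y_k\to\by$ and $f(x_k,y_k)\to0$, so that this same sequence is admissible in the definitions of $L_2$ and $L_3$, forcing $L_2\le L_1$ and $L_3\le L_1$. To see $f(x_k,y_k)\to0$: if not, passing to a subsequence $f(x_k,y_k)\ge c>0$; but then, since $x_k\to\bx\in S(f)$, we have $d(x_k,S(f))\to0$, so the ratio tends to $+\infty$, contradicting convergence to the finite value $L_1$ (if $L_1=+\infty$ there is nothing to prove). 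Once $f(x_k,y_k)\to0$, condition (P1) combined with (P2) gives $y_k\to\by$: indeed (P2) says $\liminf_{f(x,y)\downarrow0} f(x,y)/d(y,\by) = \gamma >0$, so for large $k$, $d(y_k,\by)\le f(x_k,y_k)/(\gamma/2)\to0$. Hence the same minimizing sequence witnesses $L_2\le L_1$ and $L_3\le L_1$, and combined with the trivial chain we conclude $L_1=L_2=L_3$.

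One point that needs a little care — and is the main obstacle — is the legitimacy of extracting a sequence realizing the $\liminf$ and then passing to subsequences: the $\liminf$ in \eqref{rr2} is over a constrained region, and one must be sure this region is nonempty and accumulates at $\bx$ (otherwise the $\liminf$ is $+\infty$ by convention and all three are $+\infty$, so equality holds trivially). Assuming nonemptiness, standard properties of $\liminf$ give a sequence $(x_k,y_k)$ in the region with $x_k\to\bx$, $f(x_k,y_k)>0$, and $f(x_k,y_k)/d(x_k,S(f))\to L_1$; the rest is the subsequence argument above. A secondary subtlety is ensuring $d(x_k,S(f))>0$ so the ratios are well-defined: this holds automatically since $f(x_k,y_k)>0$ forces $x_k\notin S(f)$ by (P1) (as $f(x_k,\by)\le f(x_k,y_k)$ would be needed — more directly, $x_k\in S(f)$ would mean $f(x_k,\by)\le0$, but that is compatible with $f(x_k,y_k)>0$ for $y_k\ne\by$, so one instead notes that if $x_k\in S(f)$ then $d(x_k,S(f))=0$ and the ratio is $+\infty$, again contradicting convergence to finite $L_1$; if $L_1=+\infty$ there is nothing to prove). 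Thus in all cases the three expressions agree.
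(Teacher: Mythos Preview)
Your argument is correct. The paper states this proposition without proof, so there is nothing to compare the approach against; your sequence-based argument is the natural one.

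One small remark on presentation: you call the chain $L_1\le L_2\le L_3$ ``for free'' and say that (P2) is needed only for the reverse direction. In fact the inclusion of the $L_3$-region into the $L_2$-region (i.e.\ $L_2\le L_3$) already uses the implication ``$f(x,y)\downarrow0\Rightarrow y\to\by$'', which is precisely the consequence of (P2) the paper records immediately after stating it. So (P2) is used in both directions; only $L_1\le L_2$ is truly free. This does not affect the validity of your proof.

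Your final paragraph is a bit convoluted but reaches the right conclusion: the point is simply that if $L_1<\infty$ then along any sequence realizing $L_1$ one has $d(x_k,S(f))>0$ (otherwise the ratio is $+\infty$) and $d(x_k,S(f))\le d(x_k,\bx)\to0$, whence $f(x_k,y_k)\to0$ and, via (P2), $y_k\to\by$. This single observation closes the loop $L_3\le L_1$ and hence $L_1=L_2=L_3$; the separate verification of $L_2\le L_1$ is then redundant.
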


The roles of variables $x$ and $y$ in definitions \eqref{eb} and \eqref{rr2} are different.
To better reflect this,
we consider the following asymmetric maximum-type distance in $X\times Y$ depending on a positive parameter $\rho$:
\begin{equation}\label{drho}
d_\rho((x,y),(u,v)):=\max\{d(x,u),\rho d(y,v)\}.
\end{equation}
Alternatively, one can use the parametric sum-type distance:
\begin{gather}\label{drho1}
d_\rho^1((x,y),(u,v)):=d(x,u)+\rho d(y,v).
\end{gather}

To formulate (nonlocal) primal space characterizations of the error bound property \eqref{eb}, we are going to use the following two (possibly infinite) constants:
\begin{gather}\label{nls-f}
|\nabla{f}|_{\rho}^{\diamond}(x,y):=
\sup_{(u,v)\ne(x,y)}
\frac{[f(x,y)-f_+(u,v)]_+}{d_\rho((x,y),(u,v))},
\\\label{uss-f}
\overline{|\nabla{f}|}{}^{\diamond}(\bar{x},\by):=
\lim_{\rho\downarrow0}
\inf_{\substack{d(x,\bx)<\rho,\,0<f(x,y)<\rho}}\,
|\nabla{f}|{}^{\diamond}_{\rho}(x,y),
\end{gather}
which are called, respectively,
the \emph{nonlocal $\rho$-slope} of $f$ at $(x,y)$
and
the \emph{uniform strict outer slope}.
It is assumed in \eqref{nls-f} that
$f(x,y)<\infty$.
If $f(x,y)=\infty$, we set $|\nabla{f}|_{\rho}^{\diamond}(x,y)=\infty$.
In \eqref{uss-f}, the usual convention that the infimum of the empty set equals $+\infty$ is in force.

The word ``strict'' reflects the fact that $\rho$-slopes at nearby points contribute to definition \eqref{uss-f} making it an analogue of the strict derivative.
The word ``outer'' is used to emphasize that
only points outside the set $S(f)$ are taken into account.
The word ``uniform'' emphasizes the nonlocal (non-limiting) character of $|\nabla{f}|_\rho^{\diamond}(x,y)$ involved in definition \eqref{uss-f}.
Note that restricting definition \eqref{uss-f} as well as many other definitions of this kind in the rest of the article to only ``outer'' points, while reasonable for characterizing the error bound and (H\"older) metric subregularity properties, prevents one from developing a reasonable calculus of the corresponding derivative-like objects.
If this restriction in the corresponding definitions is dropped, one can still use the resulting objects for formulating certain (stronger!) sufficient conditions. 

In the sequel, superscript `$\diamond$' (diamond) will be used in all constructions derived from \eqref{nls-f} and its analogues to distinguish them from ``conventional'' (local) definitions.

Definition \eqref{nls-f} is a realization of the \emph{nonlocal slope} \cite{FabHenKruOut12} for the case of a function on a product space with the product metric defined by \eqref{drho}.
In definition \eqref{uss-f}, we have not only $x\to\bx$ and $f(x,y)\downarrow0$, but also the metric on $X\times Y$ used in the definition of the nonlocal $\rho$-slope $|\nabla{f}|{}^{\diamond}_{\rho}(x,y)$ changing with the contribution of the $y$ component diminishing as $\rho\downarrow0$.

The local analogues of \eqref{nls-f} and \eqref{uss-f} are defined as follows:
\begin{gather}\label{ls-f}
|\nabla{f}|_{\rho}(x,y):=
\limsup_{\substack{u\to x,\,v\to y\\
(u,v)\ne(x,y)}}
\frac{[f(x,y)-f(u,v)]_+}{d_\rho((u,v),(x,y))},
\\\label{ss-f}
\overline{|\nabla{f}|}{}^{>}(\bar{x},\by):=
\lim_{\rho\downarrow0}
\inf_{\substack{d(x,\bx)<\rho,\,0<f(x,y)<\rho}}\,
|\nabla{f}|_{\rho}(x,y)
\end{gather}
and are called, respectively, the \emph{$\rho$-slope} of $f$ at $(x,y)$ ($f(x,y)<\infty$) and the \emph{strict outer slope} of $f$ at $(\bx,\by)$.
Definition \eqref{ls-f} is a realization of the local (strong) \emph{slope} \cite{DMT} for the case of a function on a product space with the product metric defined by \eqref{drho}.

The next \emph{modified strict outer slope} of $f$ at $(\bx,\by)$ can be useful when estimating the error bound modulus:
\begin{gather}\label{mss-f}
\overline{|\nabla{f}|}{}^{>+}(\bar{x},\by):=
\lim_{\rho\downarrow0}
\inf_{\substack{d(x,\bx)<\rho,\,0<f(x,y)<\rho}}\,
\max\left\{|\nabla{f}|_{\rho}(x,y), \frac{f(x,y)}{d(x,\bx)}\right\}.
\end{gather}
Observe that, unlike the completely local constant \eqref{ss-f}, its modified counterpart \eqref{mss-f} contains under $\max$ a nonlocal (when $(x,y)$ is fixed) component $f(x,y)/d(x,\bx)$.

In normed linear spaces, one can use for estimating slopes and hence error bounds other tools based on either directional derivatives or subdifferentials.
Below we describe some tools from the second group.

Suppose $X$ and $Y$ are normed linear spaces.
In the product space $X\times Y$, along with the usual maximum-type norm
\begin{gather*}
\|(u,v)\|=\max\{\|u\|,\|v\|\},\quad (u,v)\in X\times Y,
\end{gather*}
we consider the $\rho$-norm $\|\cdot\|_\rho$ being the realization of the $\rho$-metric \eqref{drho}:
\begin{gather*}\label{nrho}
\|(u,v)\|_\rho=\max\{\|u\|,\rho\|v\|\},\quad (u,v)\in X\times Y.
\end{gather*}
The corresponding dual norm (we keep the same notation $\|\cdot\|_\rho$ for it) is of the form:
\begin{gather*}\label{nrho*}
\|(u^*,v^*)\|_\rho=\|u^*\|+\rho\iv\|v^*\|,\quad (u^*,v^*)\in X^*\times Y^*.
\end{gather*}

One can define subdifferential counterparts of the local slopes \eqref{ls-f}--\eqref{mss-f}:
the \emph{subdifferential $\rho$-slope} \begin{equation}\label{sds-f}
|\sd{f}|_{\rho}(x,y):=
\inf_{\substack{(x^*,y^*)\in\sd f(x,y),\,
\|y^*\|<\rho}} \|x^*\|
\end{equation}
of $f$ at $(x,y)$ ($f(x,y)<\infty$) and the \emph{strict outer} and, respectively, \emph{modified strict outer subdifferential slopes}
\begin{gather}\label{ssds-f}
\overline{|\sd{f}|}{}^{>}(\bar{x},\by):=
\lim_{\rho\downarrow0}
\inf_{\substack{\|x-\bx\|<\rho,\,0<f(x,y)<\rho}}\,
|\sd{f}|_{\rho}(x,y),
\\\label{mssds-f}
\overline{|\sd{f}|}{}^{>+}(\bar{x},\by):=
\lim_{\rho\downarrow0}
\inf_{\substack{\|x-\bx\|<\rho,\,0<f(x,y)<\rho}}
\max\left\{|\sd{f}|_{\rho}(x,y), \frac{f(x,y)}{\|x-\bx\|}\right\}
\end{gather}
of $f$ at
$(\bx,\by)$.

\begin{remark}\label{RM3}
Definitions \eqref{uss-f}, \eqref{ss-f}, \eqref{mss-f}, \eqref{ssds-f}, and \eqref{mssds-f} corresponding to the lower $0$-le\-vel set $S(f)$ of $f$ can be easily extended to the case of the general lower level set $\{x\in X\mid f(x,\by)\le f(\bx,\by)\}$ with an arbitrary finite $f(\bx,\by)$.
It is sufficient to replace $f$ in (P1), (P2), \eqref{nls-f}, \eqref{uss-f}, \eqref{ss-f}, \eqref{mss-f}, \eqref{ssds-f}, and \eqref{mssds-f} with function $(x,y)\mapsto f(x,y)-f(\bx,\by)$.
\end{remark}

\begin{remark}
$\rho$-slopes \eqref{nls-f}, \eqref{ls-f}, and \eqref{sds-f} are nondecreasing functions of $\rho$.
This makes the infimums in \eqref{uss-f}, \eqref{ss-f}, \eqref{mss-f}, \eqref{ssds-f}, and \eqref{mssds-f} nondecreasing functions of $\rho$ too.
This observation allows one to simplify estimates involving strict slopes, e.g., in Corollary~\ref{3C1.1} below.
\end{remark}

The next proposition summarizes the relationships between the slopes.
It extends \cite[Proposition 3.2 and Theorem 3.3]{Kru15}.

\begin{proposition}[Relationships between slopes]\label{nc}
\begin{enumerate}
\item
$\ds|\nabla{f}|_{\rho}^{\diamond}(x,y)\ge
\max\left\{|\nabla{f}|_{\rho}(x,y), \frac{f(x,y)}{d_\rho((x,y),(\bx,\by))}\right\}$\\
for all $\rho>0$ and all $(x,y)\in X\times Y$ with $0<f(x,y)<\infty$;
\item
$\ds\overline{|\nabla{f}|}{}^{\diamond}(\bar{x},\by)\ge
\overline{|\nabla{f}|}{}^{>+}(\bx,\by)\ge
\overline{|\nabla{f}|}{}^{>}(\bx,\by)$.
\cnta
\end{enumerate}
Suppose $X$ and $Y$ are normed linear spaces.
\begin{enumerate}
\cntb
\item
$\ds\overline{|\sd{f}|}{}^{>+}(\bx,\by)\ge
\overline{|\sd{f}|}{}^{>}(\bx,\by)$;
\item
$|\nabla{f}|_{\rho}(x,y)\le
|\sd{f}|_{\rho^2}(x,y)+\rho$\\ for all $\rho>0$ and all $(x,y)\in X\times Y$ with $f(x,y)<\infty$;
\item
$\overline{|\nabla{f}|}{}^{>}(\bar{x},\by)\le
\overline{|\sd{f}|}{}^{>}(\bx,\by)$ and
$\overline{|\nabla{f}|}{}^{>+}(\bar{x},\by)\le
\overline{|\sd{f}|}{}^{>+}(\bx,\by)$;
\item\label{nc.5}
$\overline{|\nabla{f}|}{}^{>}(\bar{x},\by)=
\overline{|\sd{f}|}{}^{>}(\bx,\by)$ and $\overline{|\nabla{f}|}{}^{>+}(\bar{x},\by)=
\overline{|\sd{f}|}{}^{>+}(\bx,\by)$\\
provided that one of the following conditions is satisfied:
\begin{enumerate}
\item
$X$ and $Y$ are Asplund and $f_+$ is \lsc\ near $(\bar{x},\by)$;
\item
$f$ is convex near $(\bar{x},\by)$;
\item
$f$ is Fr\'echet differentiable near $(\bar{x},\by)$ except $(\bx,\by)$;
\item
$f=f_1+f_2$, where $f_1$ is convex near $(\bar{x},\by)$ and $f_2$ is Fr\'echet differentiable near $(\bar{x},\by)$ except $(\bx,\by)$;
\end{enumerate}
\item
if $f$ is convex near $(\bx,\by)$, then \begin{enumerate}
\item
$|\nabla{f}|_{\rho}^{\diamond}(x,y)= |\nabla{f}|_{\rho}(x,y)$\\
for all $\rho>0$ and all $(x,y)\in X\times Y$ near $(\bx,\by)$ with $0<f(x,y)<\infty$;
\item
$\ds\overline{|\nabla{f}|}{}^{\diamond}(\bar{x},\by)=
\overline{|\nabla{f}|}{}^{>+}(\bx,\by)=
\overline{|\nabla{f}|}{}^{>}(\bx,\by)= \overline{|\sd{f}|}{}^{>+}(\bx,\by)= \overline{|\sd{f}|}{}^{>}(\bx,\by)$.
\end{enumerate}
\end{enumerate}
\end{proposition}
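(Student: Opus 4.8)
I would prove the eight items essentially in the stated order, treating (i) and (iv) as the two ``pointwise'' engines, deriving (ii), (iii), (v) from them by taking infima and limits, and spending the real effort on (vi) and on (vii)(a).

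\textit{Items (i)--(iii).} The starting point is the elementary identity $[f(x,y)-f_+(u,v)]_+=\min\{f(x,y),[f(x,y)-f(u,v)]_+\}$, valid whenever $f(x,y)>0$ (check the three cases $f(u,v)\le0$, $0<f(u,v)<f(x,y)$, $f(u,v)\ge f(x,y)$). Putting $(u,v)=(\bx,\by)$ in \eqref{nls-f} and using $f_+(\bx,\by)=0$ gives $|\nabla f|_\rho^\diamond(x,y)\ge f(x,y)/d_\rho((x,y),(\bx,\by))$. Comparing the supremum in \eqref{nls-f} with the limit superior in \eqref{ls-f} along a maximizing sequence for the latter, and splitting that sequence according to the sign of $f(u,v)$ (on the part where $f(u,v)\ge0$ the two numerators coincide by the identity; on the part where $f(u,v)<0$ one has $f(x,y)-f(u,v)>f(x,y)$, so that sequence already drives both slopes to $+\infty$), yields $|\nabla f|_\rho^\diamond(x,y)\ge|\nabla f|_\rho(x,y)$; together these prove (i). In (ii), $\overline{|\nabla f|}{}^{>+}\ge\overline{|\nabla f|}{}^{>}$ and, in (iii), $\overline{|\sd f|}{}^{>+}\ge\overline{|\sd f|}{}^{>}$, are just $\max\{a,b\}\ge b$. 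For $\overline{|\nabla f|}{}^{\diamond}\ge\overline{|\nabla f|}{}^{>+}$ I would combine (i) with (P2): it furnishes $c,\de>0$ with $f(x,y)\ge c\,d(y,\by)$ once $0<f(x,y)<\de$, hence $f(x,y)/d_\rho((x,y),(\bx,\by))\ge\min\{f(x,y)/d(x,\bx),\,c/\rho\}$ on the relevant set; the trivial inequality $\max\{a,\min\{b,t\}\}\ge\min\{\max\{a,b\},t\}$ then gives $|\nabla f|_\rho^\diamond(x,y)\ge\min\bigl\{\max\{|\nabla f|_\rho(x,y),f(x,y)/d(x,\bx)\},\,c/\rho\bigr\}$, and taking the infimum over $\{d(x,\bx)<\rho,\ 0<f(x,y)<\rho\}$ and then $\rho\downarrow0$ makes the $c/\rho$ term disappear.

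\textit{Items (iv)--(v).} For (iv), since $\sd$ is invariant under renorming (noted after \eqref{Fr}), $\sd f(x,y)$ may be computed in $(X\times Y,\|\cdot\|_\rho)$, and the definition \eqref{Frsd} directly gives $|\nabla f|_\rho(x,y)\le\|(x^*,y^*)\|_\rho=\|x^*\|+\rho\iv\|y^*\|$ for every $(x^*,y^*)\in\sd f(x,y)$; restricting to pairs with $\|y^*\|<\rho^2$ bounds $\rho\iv\|y^*\|<\rho$, whence $|\nabla f|_\rho(x,y)\le|\sd f|_{\rho^2}(x,y)+\rho$. Item (v) is then a rescaling: as $d(x,\bx)=\|x-\bx\|$ and, for $\si\in(0,1)$, $\{\|x-\bx\|<\si,\,0<f<\si\}\subseteq\{\|x-\bx\|<\sqrt\si,\,0<f<\sqrt\si\}$, applying (iv) with $\rho=\sqrt\si$ gives $\inf_{d(x,\bx)<\sqrt\si,0<f<\sqrt\si}|\nabla f|_{\sqrt\si}\le\inf_{\|x-\bx\|<\si,0<f<\si}|\sd f|_{\si}+\sqrt\si$, and $\si\downarrow0$ yields $\overline{|\nabla f|}{}^{>}\le\overline{|\sd f|}{}^{>}$; carrying along the extra term $f(x,y)/\|x-\bx\|$ (which is common to both modified slopes) and using $\max\{a+\eps,c\}\le\max\{a,c\}+\eps$ gives the modified version.

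\textit{Item (vi).} This is the main work: I must prove the reverse estimates $\overline{|\sd f|}{}^{>}\le\overline{|\nabla f|}{}^{>}$ and $\overline{|\sd f|}{}^{>+}\le\overline{|\nabla f|}{}^{>+}$ under each of (a)--(d). Fix $\eps>0$, put $\mu:=\overline{|\nabla f|}{}^{>}$ (the case $\mu=\infty$ being covered by (v)); for each small $\rho$ pick $(x,y)$ with $d(x,\bx)<\rho$, $0<f(x,y)<\rho$, $|\nabla f|_\rho(x,y)<\mu+\eps$. That last inequality says exactly that $(x,y)$ is a strict local minimizer, in the $\|\cdot\|_\rho$ geometry, of $g:=f+(\mu+\eps)\|\cdot-(x,y)\|_\rho$, so $0\in\sd g(x,y)$. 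Now apply Lemma~\ref{l02}. In case (a), $f(x,y)>0$ and $f_+$ \lsc\ force $f=f_+$ near $(x,y)$, so $f$ is \lsc\ there; since $X\times Y$ is Asplund and $\|\cdot-(x,y)\|_\rho$ is Lipschitz, the fuzzy sum rule produces $(x',y')$ near $(x,y)$ and $(x^*,y^*)\in\sd f(x',y')$ with $\|x^*\|+\rho\iv\|y^*\|<\mu+\eps+\tau$, where $\tau$ is an error controlled by the rule's tolerance and the inclusion $\sd\|\cdot\|_\rho(\cdot)\subseteq\{(x^*,y^*):\|x^*\|+\rho\iv\|y^*\|\le1\}$ from Lemma~\ref{ll01}. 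In case (b), $g$ is convex and the convex sum rule gives the same with $(x',y')=(x,y)$ and $\tau=0$; in case (c), $f$ is Fr\'echet differentiable at $(x,y)\ne(\bx,\by)$ and $\nabla f(x,y)\in\sd f(x,y)$ already satisfies $\|\nabla_xf(x,y)\|+\rho\iv\|\nabla_yf(x,y)\|=|\nabla f|_\rho(x,y)<\mu+\eps$; in case (d), writing $g=[f_1+(\mu+\eps)\|\cdot-(x,y)\|_\rho]+f_2$ and combining the differentiable and convex sum rules reduces matters to case (b). In every case $\|x^*\|<\mu+\eps+\tau$ and $\|y^*\|<\rho(\mu+\eps+\tau)$, so the point at which the subgradient lives sits in the region defining $\overline{|\sd f|}{}^{>}$ at a scale $\rho'\to0$ and $|\sd f|_{\rho'}\le\mu+\eps+\tau$ there; letting $\rho\downarrow0$ (so $\tau\downarrow0$) and then $\eps\downarrow0$ finishes it. For the modified slopes one keeps $f(x,y)/\|x-\bx\|$ in play: points with $x=\bx$ contribute $+\infty$ and are discarded, and choosing the fuzzy-rule tolerance $\ll\|x-\bx\|$ makes $f(x',y')/\|x'-\bx\|$ as close to $f(x,y)/\|x-\bx\|$ as desired. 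I expect this variational step, and in particular the simultaneous bookkeeping of the parameters $\rho,\rho',\eps,\tau$, to be the principal obstacle.

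\textit{Item (vii).} For (vii)(a), ``$\ge$'' is (i); for ``$\le$'' I would use that convexity near $(\bx,\by)$ makes $f$ continuous there, hence (with (P1) and $f(\bx,\by)=0$) $f\ge0$ on the convex neighbourhood $W$ and $\bx\in S(f)$; shrink the neighbourhood of $(\bx,\by)$ so that $f(x,y)<\min\{\de,c\eta\}$, where $\de,c$ are the (P2)-constants and $\eta$ the radius of a max-norm ball inside $W$. Using the identity from (i), the supremum in \eqref{nls-f} decomposes: over $(u,v)\in W$ the monotonicity of convex difference quotients bounds each ratio by $|\nabla f|_\rho(x,y)$; over $(u,v)\notin W$ with $v\ne\by$ the numerator vanishes by (P1)--(P2) and the choice of radius; over $(u,v)\notin W$ with $v=\by$ one has $\|u-\bx\|>\eta$, so $d_\rho((x,y),(u,v))\ge\|u-x\|>\tfrac34\eta\ge\max\{\|x-\bx\|,\rho\,d(y,\by)\}$ (this last inequality requiring, for large $\rho$, that the neighbourhood be small relative to $\rho$), whence the ratio is $\le f(x,y)/d_\rho((x,y),(\bx,\by))\le|\nabla f|_\rho(x,y)$ by the convexity estimate at $(\bx,\by)$. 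Finally (vii)(b) is assembled: (ii) gives $\overline{|\nabla f|}{}^{\diamond}\ge\overline{|\nabla f|}{}^{>+}\ge\overline{|\nabla f|}{}^{>}$, case (b) of (vi) gives $\overline{|\nabla f|}{}^{>}=\overline{|\sd f|}{}^{>}$ and $\overline{|\nabla f|}{}^{>+}=\overline{|\sd f|}{}^{>+}$, and passing (vii)(a) to the limit closes the chain $\overline{|\nabla f|}{}^{\diamond}=\overline{|\nabla f|}{}^{>}$ --- this limit is legitimate because, by (P2), every $(x,y)$ with $0<f(x,y)<\rho$ has $d(y,\by)<\rho/c$, so for small $\rho$ the whole region $\{d(x,\bx)<\rho,\ 0<f<\rho\}$ lies where (vii)(a) is available.
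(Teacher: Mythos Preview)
Your plan for items (i)--(vi) matches the paper's approach and is correct; you spell out arguments the paper defers to \cite{Kru15}, and your variational step in (vi) (local minimum of $f+(\mu+\eps)\|\cdot-(x,y)\|_\rho$ followed by the appropriate sum rule) is exactly what the paper does. One minor slip: in (vi) the point $(x,y)$ is a \emph{local} minimizer of $g$, not a strict one---but this is irrelevant since you only need $0\in\sd g(x,y)$.

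The real divergence is (vii)(a). The paper's proof is a single convexity step: for any $(u,v)$ with $f(u,v)<f(x,y)$, monotonicity of convex difference quotients along the segment gives
\[
\frac{f(x,y)-f_+(u,v)}{d_\rho((x,y),(u,v))}
\le\lim_{t\downarrow0}\frac{f(x,y)-f((1-t)(x,y)+t(u,v))}{d_\rho((1-t)(x,y)+t(u,v),(x,y))}
\le|\nabla f|_\rho(x,y),
\]
and taking the supremum over $(u,v)$ finishes it. Your case decomposition over $(u,v)\in W$ versus $(u,v)\notin W$ is both unnecessary and flawed. First, ``convexity near $(\bx,\by)$ makes $f$ continuous there'' is unjustified in infinite dimensions without an extra hypothesis (boundedness above, or lower semicontinuity). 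Second, and fatally, the claim that ``over $(u,v)\notin W$ with $v\ne\by$ the numerator vanishes'' is false: take $(u,v)$ with $\|u-\bx\|\ge\eta$ but $\|v-\by\|$ tiny (so $(u,v)\notin B_\eta(\bx,\by)$ yet $v\ne\by$); then (P1)--(P2) give only $f(u,v)\ge c\,d(v,\by)$, which can be far smaller than $f(x,y)$, so $[f(x,y)-f(u,v)]_+>0$. Third, in the case $v=\by$, your inequality $\tfrac34\eta\ge\rho\,d(y,\by)$ forces the neighbourhood to depend on $\rho$, but the statement of (vii)(a) asserts the equality for \emph{all} $\rho>0$ with a fixed neighbourhood. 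Drop the decomposition and use the paper's direct convexity argument instead.
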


\begin{proof}
(i) Inequality $|\nabla{f}|_{\rho}^{\diamond}(x,y)\ge
|\nabla{f}|_{\rho}(x,y)$ comes from \cite[Proposition 3.2(i)]{Kru15}, while
the other inequality is a direct consequence of definition \eqref{nls-f}.

(ii) The second inequality follows from comparing definitions \eqref{ss-f} and \eqref{mss-f}, while
the first inequality requires a modification of the proof of \cite[Proposition 3.2(iii)]{Kru15}.
Let $\overline{|\nabla{f}|}{}^{\diamond}(\bar{x},\by) <\ga<\infty$ and $\rho>0$.
By (P2), one can find a $\rho'\in(0,\rho)$ such that \begin{align}\label{ndi}
\frac{f(x,y)}{d(y,\by)}>\rho'\ga
\end{align}
as long as $0<f(x,y)<\rho'$.
By \eqref{uss-f}, there exists a point $(x,y)\in X\times Y$ with $d(x,\bx)<\rho'$ and $0<f(x,y)<\rho'$ such that $|\nabla{f}|{}^{\diamond}_{\rho'}(x,y)<\ga$, i.e., by \eqref{nls-f},
$$
\frac{f(x,y)-f_+(u,v)}{d_{\rho'}((x,y),(u,v))}<\ga
$$
for all $(u,v)\ne(x,y)$.
Observe that $(x,y)\ne(\bx,\by)$ since $f(x,y)>0$.
Hence, $|\nabla{f}|_{\rho}(x,y)\le\ga$ and
$$
\frac{f(x,y)}{d_{\rho'}((x,y),(\bx,\by))} =\min\left\{\frac{f(x,y)}{d(x,\bx)}, (\rho')\iv\frac{f(x,y)}{d(y,\by)}\right\}<\ga.
$$
Together with \eqref{ndi}, this implies $f(x,y)/d(x,\bx)<\ga$.
Thus,
$$
\max\left\{|\nabla{f}|_{\rho}(x,y), \frac{f(x,y)}{d(x,\bx)}\right\}\le\ga
$$
and consequently,
\begin{align*}
\inf_{\substack{d(x,\bx)<\rho,\,0<f(x,y)<\rho}}\,
\max\left\{|\nabla{f}|_{\rho}(x,y), \frac{f(x,y)}{d(x,\bx)}\right\}\le\ga.
\end{align*}
Taking limits as $\rho\downarrow0$ and $\ga\downarrow \overline{|\nabla{f}|}{}^{\diamond}(\bar{x},\by)$, we arrive at the claimed inequality.

(iii) is a consequence of definitions \eqref{ssds-f} and \eqref{mssds-f}.

(iv) comes from \cite[Theorem 3.3(i)]{Kru15}.

(v) is a consequence of (iv) and definitions \eqref{ssds-f} and \eqref{mssds-f}.

(\ref{nc.5}) Under condition (a), the first equality comes from \cite[Theorem 3.3(iii)]{Kru15}, while the second one requires a modification of the proof of that theorem.

Let $X$ and $Y$ be Asplund and $f_+$ be \lsc\ near $(\bar{x},\by)$.
Thanks to (v), we only need to prove that
$\overline{|\nabla{f}|}{}^{>+}(\bar{x},\by)\ge
\overline{|\sd{f}|}{}^{>+}(\bx,\by)$.
If $\overline{|\nabla{f}|}{}^{>+}(\bar{x},\by)=\infty$, the assertion is trivial.
Let $\overline{|\nabla{f}|}{}^{>+}(\bar{x},\by)< \gamma<\infty$.
Choose a $\ga'\in(\overline{|\nabla{f}|}{}^{>+}(\bar{x},\by),\gamma)$ and an arbitrary $\rho>0$.
Set $\rho':=\min\{1,\ga\iv\}\rho$.
By definitions \eqref{mss-f} and \eqref{ls-f},
one can find a point $(x,y)\in X\times Y$ such that $d(x,\bx)<\rho'$, $0<f(x,y)<\rho'$, $f(x,y)/d(x,\bx)<\ga'$, $f$ is \lsc\ near $(x,y)$, and
$$
f(x,y)-f(u,v)\le\ga'\|(u,v)-(x,y)\|_{\rho'}\quad \mbox{for all } (u,v) \mbox{ near } (x,y).
$$
In other words, $(x,y)$ is a point of local minimum of the function $$(u,v)\mapsto f(u,v)+\ga'\|(u,v)-(x,y)\|_{\rho'}.$$
Take an
$$\eps\in\left(0,\min\set{\rho-d(x,\bx),\rho-f(x,y), \ga-\ga',\frac{\ga d(x,\bx)-f(x,y)}{\ga+1}}\right)$$
sufficiently small such that $f$ is \lsc\ on $B_\eps((x,y))$ and $B_\eps(x)\cap S(f)=\emptyset$.
Applying the \emph{fuzzy sum rule} (Lemma~\ref{l02}(i)), we find points $(z,w)\in X\times Y$ and $(x^*,y^*)\in\sd f(z,w)$ such that 
$$d((z,w),(x,y))<\eps,\; f(z,w)<f(x,y)+\eps,\mbox{ and } \|(x^*,y^*)\|_{\rho'}<\ga'+\eps.$$
It follows that
$$d(z,\bx)<\rho,\;0<f(z,w)<\rho,\;f(z,w)/d(z,\bx)<\ga,\; \|x^*\|<\ga,\mbox{ and }\|y^*\|<{\rho'}\ga\le\rho.$$
Hence, $|\sd{f}|_{\rho}(z,w)<\ga$ and consequently $\overline{|\sd{f}|}{}^{>+}(\bar{x},\by)\le\ga$.
The claimed inequality follows after letting $\ga\to\overline{|\nabla{f}|}{}^{>+}(\bar{x},\by)$.

The validity of the equalities under each of the conditions (b)--(d) follows from the proof of \cite[Theorem 3.3(iii)]{Kru15} if one replaces the fuzzy sum rule for Fr\'echet subdifferentials in Asplund spaces there with the exact convex sum rule (Lemma~\ref{l02}(iii)) and differentiable sum rule (Lemma~\ref{l02}(ii)) valid in arbitrary normed spaces.

(vii) Let $\rho>0$, $(x,y)$ belong to a convex neighbourhood of $(\bx,\by)$ on which $f$ is convex and $0<f(x,y)<\infty$.
Take any $(u,v)\in X\times Y$ with $f(u,v)<f(x,y)$.
Then,
\begin{gather*}
\frac{f(x,y)-f_+(u,v)}{d_\rho((x,y),(u,v))}\le \lim_{t\downarrow0} \frac{f(x,y)-f((1-t)(x,y)+t(u,v))} {d_\rho((1-t)(x,y)+t(u,v),(x,y))}\le |\nabla{f}|_{\rho}(x,y),
\end{gather*}
and consequently, $|\nabla{f}|_{\rho}^{\diamond}(x,y)\le |\nabla{f}|_{\rho}(x,y)$.
Together with (i), this proves (a).
Equality
$\ds\overline{|\nabla{f}|}{}^{\diamond}(\bar{x},\by)=
\overline{|\nabla{f}|}{}^{>}(\bx,\by)$ follows from (a) and definitions \eqref{uss-f} and \eqref{ss-f}.
The other equalities in (b) follow from (ii) and (\ref{nc.5})(b).
\qed\end{proof}

\begin{remark}\label{R4}
One of the main tools in the proof of inequalities
\begin{align*}
\overline{|\nabla{f}|}{}^>(\bar{x},\by)
\ge\overline{|\partial{f}|}{}^>(\bar{x},\by),\qquad
\overline{|\nabla{f}|}{}^{>+}(\bar{x},\by)
\ge\overline{|\partial{f}|}{}^{>+}(\bar{x},\by)
\end{align*}
in item (a) of part (\ref{nc.5}) of the above proposition, which is crucial for the subdifferential sufficient error bound criteria in Corollaries~\ref{3C1.1} and \ref{C1-0} below, is the fuzzy sum rule (Lemma~\ref{l02}) for Fr\'echet subdifferentials in Asplund spaces.
There are two ways of extending this inequality to general Banach spaces.

1) Restricting the class of functions to those possessing a kind of sum rule for Fr\'echet subdifferentials as in items (b)--(d) of part (\ref{nc.5}) of the above proposition.

2) Replacing Fr\'echet subdifferentials with some other (possibly abstract) subdifferentials on the given space satisfying a certain set of natural properties including a kind of (fuzzy or exact) sum rule (\emph{trustworthy} subdifferentials \cite{Iof83,Iof00_}); cf. \cite[Proposition~1.13]{Aze03}, \cite[Proposition~2.3]{Aze06}, \cite[Proposition~4.1]{AzeCor04}, \cite[Proposition~6]{FabHenKruOut10}.
For instance, one can use for that purpose \emph{Ioffe approximate} or \emph{Clarke-Rockafellar} subdifferentials.
Note that the opposite inequalities in part (v) are specific for Fr\'echet subdifferentials and fail in general for other types of subdifferentials.
\end{remark}

The uniform strict outer slope \eqref{uss-f} provides the necessary and sufficient characterization of error bounds \cite[Theorem 4.1]{Kru15}.

\begin{theorem}\label{3T1}
\begin{enumerate}
\item
$\Er f(\bx,\by)\le
\overline{|\nabla{f}|}{}^{\diamond}(\bar{x},\by)$;
\item
if $X$ and $Y$ are complete and $f_+$ is lower semicontinuous
near $(\bar{x},\by)$,
then
$\Er f(\bx,\by)=
\overline{|\nabla{f}|}{}^{\diamond}(\bar{x},\by)$.
\end{enumerate}
\end{theorem}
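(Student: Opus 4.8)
The plan is to prove the two one-sided inequalities separately, following the approach of \cite[Theorem~4.1]{Kru15}: the estimate $\Er f(\bx,\by)\le\overline{|\nabla{f}|}{}^{\diamond}(\bar{x},\by)$ in (i) is ``nonlocal'' and needs no completeness, while the converse in (ii) is the substantive part and is obtained via the Ekeland variational principle on $X\times Y$ equipped with the metric $d_\rho$. For (i), fix $\gamma>\overline{|\nabla{f}|}{}^{\diamond}(\bar{x},\by)$. Since the $\rho$-slopes, hence the infima in \eqref{uss-f}, are nondecreasing in $\rho$, for all sufficiently small $\rho>0$ there is $(x,y)$ with $d(x,\bx)<\rho$, $0<f(x,y)<\rho$ and $|\nabla{f}|_{\rho}^{\diamond}(x,y)<\gamma$. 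Inserting into \eqref{nls-f} the test pairs $(u,\by)$ with $u\in S(f)$ (so that $f_+(u,\by)=0$) gives $f(x,y)<\gamma\max\{d(x,u),\rho\,d(y,\by)\}$ for each such $u$; if moreover $\rho<c/\gamma$, with $c>0$ the constant furnished by (P2), then $\rho\gamma\,d(y,\by)<f(x,y)$, so the maximum must be $d(x,u)$ and $f(x,y)\le\gamma\,d(x,S(f))$ with $d(x,S(f))>0$. Letting $\rho\downarrow0$ produces points with $x\to\bx$ and $f(x,y)\downarrow0$ along which $f(x,y)/d(x,S(f))\le\gamma$, so $\Er f(\bx,\by)\le\gamma$ by \eqref{rr2} and Proposition~\ref{pr-f}; now let $\gamma\downarrow\overline{|\nabla{f}|}{}^{\diamond}(\bar{x},\by)$.

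For (ii) it remains, by (i), to prove $\Er f(\bx,\by)\ge\overline{|\nabla{f}|}{}^{\diamond}(\bar{x},\by)$. Assume this fails and pick $\gamma$ with $\Er f(\bx,\by)<\gamma<\overline{|\nabla{f}|}{}^{\diamond}(\bar{x},\by)$. Since $\gamma$ exceeds the exact upper bound of the admissible error bound constants, for every $\delta>0$ there are $x_\delta$ with $d(x_\delta,\bx)<\delta$ and $y_\delta\in Y$ with $f_+(x_\delta,y_\delta)<\gamma\,d(x_\delta,S(f))$; because $\bx\in S(f)$ this forces $0<d(x_\delta,S(f))<\delta$, then (P1) forces $f(x_\delta,y_\delta)=f_+(x_\delta,y_\delta)>0$, and (P2) gives $y_\delta\to\by$ as $\delta\downarrow0$. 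Now fix a small $\rho>0$ and a closed $d_\rho$-ball $\bar B$ around $(\bx,\by)$ on which $f_+$ is \lsc; then $(\bar B,d_\rho)$ is complete, it contains $(x_\delta,y_\delta)$ for $\delta$ small, and $\inf_{\bar B}f_+=f_+(\bx,\by)=0$. Apply the Ekeland variational principle to $g:=f_+$ on $\bar B$ with $\eps_\delta:=f(x_\delta,y_\delta)$ and radius $\la_\delta:=\eps_\delta/\gamma$, noting that $\la_\delta<d(x_\delta,S(f))$. This produces $(\hat x,\hat y)\in\bar B$ with $g(\hat x,\hat y)\le\eps_\delta$, $d_\rho((\hat x,\hat y),(x_\delta,y_\delta))\le\la_\delta$, and $g(\hat x,\hat y)\le g(u,v)+\gamma\,d_\rho((u,v),(\hat x,\hat y))$ for all $(u,v)\in\bar B$.

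Three checks then finish the proof. (a) For $\delta$ small, $d(\hat x,\bx)<\rho$ and $\hat y\to\by$, since $d_\rho((\hat x,\hat y),(x_\delta,y_\delta))\le\la_\delta=\eps_\delta/\gamma<\delta$. (b) $\hat x\notin S(f)$, because $d(\hat x,x_\delta)\le\la_\delta<d(x_\delta,S(f))$; consequently, by (P1), $0<f(\hat x,\hat y)=g(\hat x,\hat y)\le\eps_\delta$, which is $<\rho$ once $\delta$ is small. (c) Since $f(\hat x,\hat y)>0$, the Ekeland inequality rewrites as $[f(\hat x,\hat y)-f_+(u,v)]_+\le\gamma\,d_\rho((\hat x,\hat y),(u,v))$ for all $(u,v)\in\bar B$; for $(u,v)\notin\bar B$ the same estimate is automatic, because the numerator is $\le\eps_\delta$ while $d_\rho((\hat x,\hat y),(u,v))$ stays bounded away from $0$ (for $\delta$ small). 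Therefore $|\nabla{f}|_{\rho}^{\diamond}(\hat x,\hat y)\le\gamma$, so the infimum in \eqref{uss-f} for this $\rho$ is $\le\gamma$; as these infima are nondecreasing in $\rho$, letting $\rho\downarrow0$ yields $\overline{|\nabla{f}|}{}^{\diamond}(\bar{x},\by)\le\gamma$, contradicting the choice of $\gamma$.

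The main obstacle is in (ii): one must choose the Ekeland radius $\la_\delta$ \emph{strictly} below $d(x_\delta,S(f))$ — possible precisely because the error bound inequality fails with a \emph{strict} inequality $f_+(x_\delta,y_\delta)<\gamma\,d(x_\delta,S(f))$ — so that the perturbed point $(\hat x,\hat y)$ is forced to stay outside $S(f)$, keeping $f(\hat x,\hat y)>0$ and making the outer slope $|\nabla{f}|_{\rho}^{\diamond}$ meaningful there; the rest (running Ekeland on a ball where $f_+$ is \lsc, and handling separately the test points outside that ball) is routine bookkeeping. Properties (P1) and (P2) enter exactly where positivity of $f$ at the perturbed point and the domination $\rho\,d(y,\by)\ll f(x,y)$ are required.
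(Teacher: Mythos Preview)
Your argument is correct and follows the standard route that the paper invokes by citing \cite[Theorem~4.1]{Kru15}: the upper estimate in (i) is obtained by testing the nonlocal $\rho$-slope against points $(u,\by)$ with $u\in S(f)$ and using (P2) to kill the $\rho d(y,\by)$ term, while (ii) is the Ekeland step with $\lambda_\delta=\varepsilon_\delta/\gamma<d(x_\delta,S(f))$ ensuring $\hat x\notin S(f)$. Two cosmetic points worth tightening: for small $\rho$ a $d_\rho$-ball is elongated in the $y$-direction, so to keep $\bar B$ inside the region where $f_+$ is \lsc\ you must take its $d_\rho$-radius of order $\rho$ (or simply work on a fixed closed product ball, which is still complete for $d_\rho$); and your monotonicity remark about the infima is not actually needed---you have already produced, for every small $\rho$, a point with $|\nabla f|_\rho^{\diamond}\le\gamma$, which directly gives $\overline{|\nabla f|}{}^{\diamond}(\bar x,\bar y)\le\gamma$.
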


It follows from Theorem~\ref{3T1}  that inequality
$\overline{|\nabla{f}|}{}^{\diamond}(\bar{x},\by)>0$ is crucial for determining the error bound property of $f$ at $(\bx,\by)$.

\begin{remark}\label{rm2}
The nonlocal $\rho$-slope \eqref{nls-f} depends on the choice of $\rho$-metric on the product space.
If instead of the maximum-type metric $d_\rho$, defined by \eqref{drho}, one employs in \eqref{nls-f} the sum-type metric $d_\rho^1$, defined by \eqref{drho1},
it will produce a different number.
We say that a $\rho$-metric $d'_\rho$ on $X\times Y$ is admissible if $d_\rho\le d'_\rho\le d^1_\rho$.
Thanks to \cite[Proposition~4.2]{Kru15}, Theorem~\ref{3T1} is invariant on the choice of an admissible $\rho$-metric.
\end{remark}

Thanks to Theorem~\ref{3T1} and Proposition~\ref{nc}, one can formulate several quantitative and qualitative criteria of the error bound property in terms of various slopes.

\begin{corollary}[Quantitative criteria]\label{3C1.1}
Let $\ga>0$.
Consider the following conditions:
\renewcommand {\theenumi} {\alph{enumi}}
\begin{enumerate}
\item
$f$ has an error bound with respect to $x$ at $(\bx,\by)$ with some $\tau>0$;
\item
$\overline{|\nabla{f}|}{}^{\diamond}(\bar{x},\by)>\ga$,\\ i.e., for some $\rho>0$ and any $(x,y)\in X\times Y$ with $d(x,\bx)<\rho$ and $0<f(x,y)<\rho$, it holds $|\nabla{f}|_\rho^{\diamond}(x,y)>\ga$, and consequently there is a $(u,v)\in X\times Y$ such that
\begin{gather*}\label{3ree}
f(x,y)-f_+(u,v)>\ga d_\rho((u,v),(x,y));
\end{gather*}
\item
$\ds\liminf_{x\to\bar{x},\;f(x,y)\downarrow0} \frac{f(x,y)}{d(x,\bx)}>\ga$;
\item
$\overline{|\nabla{f}|}{}^{>}(\bar{x},\by)>\ga$,\\ i.e., for some $\rho>0$ and any $(x,y)\in X\times Y$ with $d(x,\bx)<\rho$ and $0<f(x,y)<\rho$, it holds $|\nabla{f}|_\rho(x,y)>\ga$ and consequently, for any $\eps>0$, there is a $(u,v)\in B_\eps(x,y)$ such that
\begin{gather}\label{3ree2}
f(x,y)-f(u,v)>\ga d_\rho((u,v),(x,y));
\end{gather}
\item
$\overline{|\nabla{f}|}{}^{>+}(\bar{x},\by)>\ga$,\\ i.e., for some $\rho>0$ and any $(x,y)\in X\times Y$ with $d(x,\bx)<\rho$, $0<f(x,y)<\rho$, and $f(x,y)/d(x,\bx)\le\ga$, it holds $|\nabla{f}|_\rho(x,y)>\ga$ and consequently, for any $\eps>0$, there is a $(u,v)\in B_\eps(x,y)$ such that \eqref{3ree2} holds true;
\item
$X$ and $Y$ are normed spaces and $\overline{|\sd{f}|}{}^{>}(\bar{x},\by)>\ga$,\\ i.e.,
for some $\rho>0$ and any $(x,y)\in X\times Y$ with $\|x-\bx\|<\rho$ and $0<f(x,y)<\rho$, it holds $|\sd{f}|_\rho(x,y)>\ga$ and consequently $\|x^*\|>\ga$ for all $(x^*,y^*)\in\sd f(x,y)$ with $\|y^*\|<\rho$;
\item
$X$ and $Y$ are normed spaces and $\overline{|\sd{f}|}{}^{>+}(\bar{x},\by)>\ga$,\\  i.e.,
for some $\rho>0$ and any $(x,y)\in X\times Y$ with $\|x-\bx\|<\rho$, $0<f(x,y)<\rho$, and $f(x,y)/\|x-\bx\|\le\ga$, it holds $|\sd{f}|_\rho(x,y)>\ga$ and consequently $\|x^*\|>\ga$ for all $(x^*,y^*)\in\sd f(x,y)$ with $\|y^*\|<\rho$.
\end{enumerate}
\renewcommand {\theenumi} {\roman{enumi}}
The following implications hold true:
\begin{enumerate}
\item
{\rm (c) \folgt (e)},
{\rm (d) \folgt (e)},
{\rm (e) \folgt (b)},
{\rm (f) \folgt (g)};
\item
if $\ga<\tau$, then {\rm (a) \folgt (b)};
\item
if $\tau\le\ga$, $X$ and $Y$ are complete, and $f_+$ is lower semicontinuous near $(\bar{x},\by)$, then {\rm (b) \folgt (a)}.
\cnta
\end{enumerate}
Suppose $X$ and $Y$ are normed spaces.
\begin{enumerate}
\cntb
\item
{\rm (d) \folgt (f)} and {\rm (e) \folgt (g)};
\item
{\rm (d)~\iff (f) \folgt (e)~\iff (g)} provided that one of the conditions {\rm (a)--(d)} in part {\rm (\ref{nc.5})} of Proposition~\ref{nc} is satisfied;
\item
if $f$ is convex near $(\bx,\by)$, then {\rm (b)~\iff (d) \iff (e)~\iff (f)~\iff (g)}.
\end{enumerate}
\end{corollary}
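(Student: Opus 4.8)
The plan is to deduce everything from Theorem~\ref{3T1} and the slope comparisons collected in Proposition~\ref{nc}: once these are available, almost every implication is a one-line inequality chase, so the work is mainly organisational. I would treat the corollary part by part.

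For part~(i): {\rm (e)~\folgt~(b)} is the first inequality $\overline{|\nabla{f}|}{}^{\diamond}(\bar x,\by)\ge\overline{|\nabla{f}|}{}^{>+}(\bx,\by)$ of Proposition~\ref{nc}(ii), {\rm (d)~\folgt~(e)} is its second inequality $\overline{|\nabla{f}|}{}^{>+}(\bx,\by)\ge\overline{|\nabla{f}|}{}^{>}(\bx,\by)$, and {\rm (f)~\folgt~(g)} is Proposition~\ref{nc}(iii). The only implication in part~(i) that is not literally one of these inequalities is {\rm (c)~\folgt~(e)}, and for it I would return to definition~\eqref{mss-f}: the quotient $f(x,y)/d(x,\bx)$ sits under the $\max$ with no extra restriction, so if $\liminf_{x\to\bar x,\,f(x,y)\downarrow0}f(x,y)/d(x,\bx)>\ga$ then, choosing $\ga'$ strictly between $\ga$ and this $\liminf$, there is $\rho_0>0$ with $f(x,y)/d(x,\bx)\ge\ga'$ whenever $d(x,\bx)<\rho_0$ and $0<f(x,y)<\rho_0$; hence the inner infimum in \eqref{mss-f} is $\ge\ga'$ for every $\rho<\rho_0$, and letting $\rho\downarrow0$ gives $\overline{|\nabla{f}|}{}^{>+}(\bx,\by)\ge\ga'>\ga$, which is {\rm (e)}.

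Parts~(ii) and~(iii) are where Theorem~\ref{3T1} enters, together with the fact recalled after \eqref{rr2} that $\Er f(\bx,\by)$ is the supremum of the constants $\tau$ for which the error bound \eqref{eb} holds. For part~(ii), if {\rm (a)} holds with constant $\tau>\ga$ then $\Er f(\bx,\by)\ge\tau>\ga$, so Theorem~\ref{3T1}(i) gives $\overline{|\nabla{f}|}{}^{\diamond}(\bar x,\by)\ge\Er f(\bx,\by)>\ga$, i.e.\ {\rm (b)}. For part~(iii), completeness of $X,Y$ and lower semicontinuity of $f_+$ make Theorem~\ref{3T1}(ii) applicable, giving $\Er f(\bx,\by)=\overline{|\nabla{f}|}{}^{\diamond}(\bar x,\by)$; then {\rm (b)} forces $\Er f(\bx,\by)>\ga$, so \eqref{eb} holds with any constant $\tau\le\ga$ (in particular $\Er f(\bx,\by)>0$), which is {\rm (a)}.

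For the normed-space parts: {\rm (iv)} is immediate from the inequalities $\overline{|\nabla{f}|}{}^{>}(\bar x,\by)\le\overline{|\sd{f}|}{}^{>}(\bx,\by)$ and $\overline{|\nabla{f}|}{}^{>+}(\bar x,\by)\le\overline{|\sd{f}|}{}^{>+}(\bx,\by)$ of Proposition~\ref{nc}(v); under any of the hypotheses (a)--(d) of Proposition~\ref{nc}(\ref{nc.5}) these two inequalities become equalities, whence {\rm (d)~\iff~(f)} and {\rm (e)~\iff~(g)}, and together with {\rm (d)~\folgt~(e)} from part~(i) this is exactly the chain asserted in~(v); finally {\rm (vi)} follows because, by Proposition~\ref{nc}(vii)(b), convexity of $f$ near $(\bx,\by)$ makes $\overline{|\nabla{f}|}{}^{\diamond}$, $\overline{|\nabla{f}|}{}^{>+}$, $\overline{|\nabla{f}|}{}^{>}$, $\overline{|\sd{f}|}{}^{>+}$ and $\overline{|\sd{f}|}{}^{>}$ at $(\bx,\by)$ all equal, so that {\rm (b)}, {\rm (d)}, {\rm (e)}, {\rm (f)} and {\rm (g)} become mutually equivalent. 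The ``i.e.''\ reformulations appended to conditions {\rm (b)}--{\rm (g)} are just definitions \eqref{nls-f}, \eqref{ls-f}, \eqref{sds-f} and their strict-outer counterparts spelled out. I do not anticipate any serious obstacle; the only step calling for some care is {\rm (c)~\folgt~(e)}, where one must keep the order of the limit $\rho\downarrow0$ and the infimum in \eqref{mss-f} straight.
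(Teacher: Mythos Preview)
Your proposal is correct and matches the paper's intended approach: the paper does not spell out a proof of this corollary but explicitly presents it as a consequence of Theorem~\ref{3T1} together with the slope comparisons in Proposition~\ref{nc}, which is precisely what you do. The only step requiring a short direct argument, namely {\rm (c)~\folgt~(e)} via definition~\eqref{mss-f}, is handled correctly.
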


The conclusions of Corollary~\ref{3C1.1} are illustrated in
Fig.~\ref{fig.1}.

\begin{figure}[!htb]
$$\xymatrix{
&&{\rm (c)}\ar[d]
\\
{\rm (d)}\ar[rr]
\ar@/_/@{-->}[d]_{\substack{X,Y\\{\rm normed}}}
\ar @{} [drr] |{\substack{{\rm Proposition~\ref{nc}}\\{\rm (\ref{nc.5})(a)-(d)}}}
&&{\rm (e)}\ar[rr]
\ar@/^/@{-->}[d]^{\substack{X,Y\\{\rm normed}}}
\ar@/_/@{-->}[ll]_{f\,{\rm convex}}
&&{\rm (b)} \ar@/_/@{-->}[rr]_{\substack{\tau\le\ga\\X,Y\, {\rm complete}; f\,{\rm lsc}}}
\ar@/_/@{-->}[ll]_{f\,{\rm convex}}
&&{\rm (a)} \ar@/_/@{-->}[ll]_{\ga<\tau}
\\
{\rm (f)}\ar[rr]
\ar@/_/@{-->}[u]
&&{\rm (g)}
\ar@/^/@{-->}[u]
\ar@/^/@{-->}[ll]^{f\,{\rm convex}}
}$$
\caption{Corollary~\ref{3C1.1}} \label{fig.1}
\end{figure}

\begin{corollary}[Qualitative criteria]\label{C1-0}
Suppose $X$ and $Y$ are complete metric spaces and $f_+$ is lower semicontinuous near $(\bar{x},\by)$.
Then,
$f$ has an error bound with respect to $x$ at $(\bx,\by)$ provided that one of the following conditions holds true:
\renewcommand {\theenumi} {\alph{enumi}}
\begin{enumerate}
\item
$\overline{|\nabla{f}|}{}^{\diamond}(\bar{x},\by)>0$; \item
$\ds\liminf_{x\to\bar{x},\;f(x,y)\downarrow0} \frac{f(x,y)}{d(x,\bx)}>0$;
\item
$\overline{|\nabla{f}|}{}^{>}(\bar{x},\by)>0$;
\item
$\overline{|\nabla{f}|}{}^{>+}(\bar{x},\by)>0$, or equivalently,
$$\ds\lim_{\rho\downarrow0}
\inf_{\substack{d(x,\bx)<\rho,\,0<\frac{f(x,y)}{d(x,\bx)}<\rho}}\,
|\nabla{f}|_\rho(x,y)>0.$$
\cnta
\end{enumerate}
If $X$ and $Y$ are Banach spaces and
one of the conditions {\rm (a)--(d)} in part {\rm (\ref{nc.5})} of Proposition~\ref{nc} is satisfied, then the following conditions are also sufficient:
\begin{enumerate}
\cntb
\item
$\overline{|\sd{f}|}{}^{>}(\bar{x},\by)>0$;
\item
$\overline{|\sd{f}|}{}^{>+}(\bar{x},\by)>0$,
or equivalently,
$$\ds\lim_{\rho\downarrow0}
\inf_{\substack{\|x-\bx\|<\rho,\,0<\frac{f(x,y)}{\|x-\bx\|}<\rho}}\,
|\sd{f}|_\rho(x,y)>0.$$
\end{enumerate}
\renewcommand {\theenumi} {\roman{enumi}}
Moreover,
\begin{enumerate}
\item
condition {\rm (a)} is also necessary for the local error bound property of $f$ at $(\bx,\by)$;
\item
{\rm (b) \folgt (d)},
{\rm (c) \folgt (d)},
{\rm (d) \folgt (a)},
{\rm (e) \folgt (f)}.
\cnta
\end{enumerate}
Suppose $X$ and $Y$ are Banach spaces and
one of the conditions {\rm (a)--(d)} in part {\rm (\ref{nc.5})} of Proposition~\ref{nc} is satisfied.
\begin{enumerate}
\cntb
\item
{\rm (c) \iff (e)} and {\rm (d) \iff (f)};
\item
if $f$ is convex near $(\bx,\by)$, then {\rm (a) \iff (c) \iff (d) \iff \rm (e) \iff (f)}.
\end{enumerate}
\end{corollary}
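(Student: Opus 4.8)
The plan is to read Corollary~\ref{C1-0} as the limiting, $\gamma\downarrow0$, counterpart of the quantitative Corollary~\ref{3C1.1}: for each of the slopes $Q(\bx,\by)$ occurring in (a)--(f) one has $Q(\bx,\by)>0$ if and only if $Q(\bx,\by)>\gamma$ for some $\gamma>0$, so every assertion below will be a consequence of Theorem~\ref{3T1} together with the slope comparisons in Proposition~\ref{nc}. It is convenient to dispose at once of the two displayed reformulations in (d) and (f): each is obtained by elementary manipulation of definitions \eqref{mss-f} and \eqref{mssds-f}, splitting the infimum there according to whether the ratio $f(x,y)/d(x,\bx)$ (respectively $f(x,y)/\|x-\bx\|$) is below or above the running parameter $\rho$ and using the monotone dependence of the $\rho$-slopes $|\nabla{f}|_\rho$ and $|\sd{f}|_\rho$ on $\rho$; hence (d) and (f) may be used interchangeably with their reformulations.

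I would begin with the implication chain in part~(ii). The inclusions (c)\,$\Rightarrow$\,(d) and (d)\,$\Rightarrow$\,(a) are immediate from Proposition~\ref{nc}(ii), i.e.\ from $\overline{|\nabla{f}|}{}^{\diamond}(\bx,\by)\ge\overline{|\nabla{f}|}{}^{>+}(\bx,\by)\ge\overline{|\nabla{f}|}{}^{>}(\bx,\by)$. The inclusion (b)\,$\Rightarrow$\,(d) follows straight from definition \eqref{mss-f}: dropping $|\nabla{f}|_\rho(x,y)$ from the maximum there shows that $\overline{|\nabla{f}|}{}^{>+}(\bx,\by)$ dominates $\lim_{\rho\downarrow0}\inf\{f(x,y)/d(x,\bx):\ d(x,\bx)<\rho,\ 0<f(x,y)<\rho\}$, and the latter equals the $\liminf$ appearing in (b) because, as $\rho\downarrow0$, the constraints force $x\to\bx$ and $f(x,y)\downarrow0$, whence $y\to\by$ by (P2). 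Finally (e)\,$\Rightarrow$\,(f) is Proposition~\ref{nc}(iii). When, in addition, $X,Y$ are Banach and one of conditions (a)--(d) of Proposition~\ref{nc}(\ref{nc.5}) holds, that statement supplies the equalities $\overline{|\nabla{f}|}{}^{>}=\overline{|\sd{f}|}{}^{>}$ and $\overline{|\nabla{f}|}{}^{>+}=\overline{|\sd{f}|}{}^{>+}$ at $(\bx,\by)$, which is precisely part~(iii): (c)\,$\Leftrightarrow$\,(e) and (d)\,$\Leftrightarrow$\,(f).

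Sufficiency then follows quickly. Since $X,Y$ are complete and $f_+$ is \lsc\ near $(\bx,\by)$, Theorem~\ref{3T1}(ii) gives $\Er f(\bx,\by)=\overline{|\nabla{f}|}{}^{\diamond}(\bx,\by)$, so (a) is equivalent to $\Er f(\bx,\by)>0$, i.e.\ to the error bound property; by the chain above, (b), (c), (d) each imply (a), and, under the extra hypotheses, (e) and (f) reduce via (iii) to (c) and (d), so all six conditions are sufficient. This also yields the necessity in part~(i): if $f$ has an error bound with some $\tau>0$ then $\Er f(\bx,\by)\ge\tau>0$, hence $\overline{|\nabla{f}|}{}^{\diamond}(\bx,\by)\ge\Er f(\bx,\by)>0$ by Theorem~\ref{3T1}(i), which is (a). For part~(iv), convexity of $f$ near $(\bx,\by)$ is one of the admissible conditions in Proposition~\ref{nc}(\ref{nc.5}), and Proposition~\ref{nc}(vii)(b) then gives $\overline{|\nabla{f}|}{}^{\diamond}=\overline{|\nabla{f}|}{}^{>+}=\overline{|\nabla{f}|}{}^{>}=\overline{|\sd{f}|}{}^{>+}=\overline{|\sd{f}|}{}^{>}$ at $(\bx,\by)$; positivity of one of these numbers is therefore equivalent to positivity of all, i.e.\ (a)\,$\Leftrightarrow$\,(c)\,$\Leftrightarrow$\,(d)\,$\Leftrightarrow$\,(e)\,$\Leftrightarrow$\,(f).

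I do not expect a genuine obstacle, since the corollary is essentially a repackaging of Theorem~\ref{3T1} and Proposition~\ref{nc} via the $\gamma\downarrow0$ passage already performed in Corollary~\ref{3C1.1}. The only places calling for care are the verification of the two reformulations in (d) and (f) --- where one must keep in mind the double role of $\rho$, scaling both the $\rho$-slope and the neighbourhood of $\bx$, and invoke the monotonicity of $|\nabla{f}|_\rho$ and $|\sd{f}|_\rho$ in $\rho$ --- and the bookkeeping of strict inequalities when translating ``$>\gamma$'' statements into ``$>0$'' statements, in particular reading off from Theorem~\ref{3T1}(i) that a genuine error bound forces $\overline{|\nabla{f}|}{}^{\diamond}(\bx,\by)$ to be \emph{strictly} positive.
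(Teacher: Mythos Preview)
Your proposal is correct and follows exactly the route the paper intends: Corollary~\ref{C1-0} is stated without an explicit proof precisely because it is the $\gamma\downarrow0$ reading of Corollary~\ref{3C1.1}, with every implication supplied by Theorem~\ref{3T1} and the slope comparisons of Proposition~\ref{nc}; your derivation matches this, and your handling of the displayed reformulations in (d) and (f) via the monotonicity of the $\rho$-slopes is the appropriate extra bookkeeping.
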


The conclusions of Corollary~\ref{C1-0} are illustrated in
Fig.~\ref{fig.2}.

\begin{figure}[!htb]
$$\xymatrix@C=.8cm{
&*+[F]{\ds\liminf_{x\to\bar{x},f(x,y)\downarrow0} \frac{f(x,y)}{d(x,\bx)}>0}\ar[d]
\\
*+[F]{\overline{|\nabla{f}|}{}^{>}(\bar{x},\by)>0}
\ar[r]
\ar@{-->}[d]
\ar@{} [dr] |{\substack{X,Y\,{\rm Banach}\\{\rm Proposition~\ref{nc}}\\{\rm (\ref{nc.5})(a)-(d)}}}
&*+[F]{\overline{|\nabla{f}|}{}^{>+}(\bar{x},\by)>0}
\ar[r]
\ar@{-->}[d]
\ar@/_/@{-->}[l]_{f\,{\rm convex}}
&*+[F]{\overline{|\nabla{f}|}{}^{\diamond}(\bar{x},\by)>0}
\ar[r]
\ar@/_/@{-->}[l]_{f\,{\rm convex}}
&*+[F]{\Er f(\bx,\by)>0} \ar[l]
\\
*+[F]{\overline{|\sd{f}|}{}^{>}(\bar{x},\by)>0}
\ar[r]
\ar@{-->}[u]
&*+[F]{\overline{|\sd{f}|}{}^{>+}(\bar{x},\by)>0}
\ar@{-->}[u]
\ar@/^/@{-->}[l]^{f\,{\rm convex}}
}$$
\caption{Corollary~\ref{C1-0}}\label{fig.2}
\end{figure}
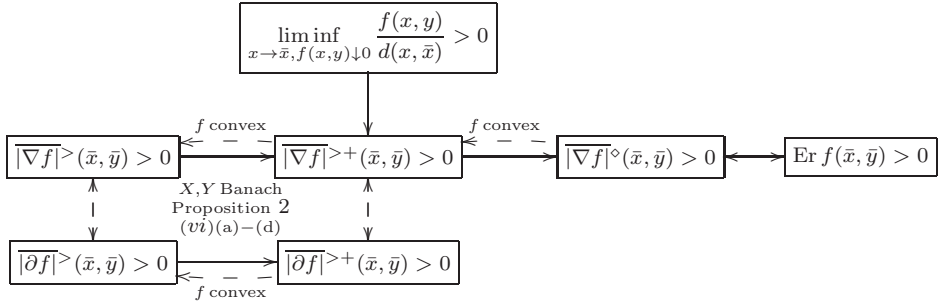

\section{H\"older metric subregularity}\label{S5}

From now on, $F:X\rightrightarrows Y$ is a set-valued mapping between metric spaces and $(\bx,\by)\in\gph F$.
We are targeting the H\"older metric subregularity property \eqref{02}, the main tool being the error bound criteria discussed in the previous section.
First, recall the definition.

\subsection{Definition}
Let a real number $q\in(0,1]$ be given.

A set-valued mapping $F:X\rightrightarrows Y$ between metric spaces is called H\"older metrically subregular of order $q$ at $(\bx,\by)\in\gph F$ with constant $\tau>0$ if there exists a neighbourhood $U$ of $\bx$ such that
\begin{equation}\label{HMR}
\tau d(x,F^{-1}(\by))\le (d(\by,F(x)))^q\quad \mbox{for all } x\in U,
\end{equation}
or equivalently,
\begin{equation*}
\tau d(x,F^{-1}(\by))\le (d(y,\by))^q\quad \mbox{for all } x\in U,\; y\in F(x).
\end{equation*}

The H\"older metric subregularity property can be characterized using the following (possibly infinite) constant:
\begin{gather}\label{7CMR-g}
\sr_q[F](\bx,\by):=
\liminf_{\substack{x\to\bx\\x\notin F\iv(\by)}} \frac{(d(\by,F(x)))^q}{d(x,F^{-1}(\by))},
\end{gather}
which coincides with the supremum of all positive $\tau$ such that \eqref{HMR} holds for some $U$ and, when positive, provides a quantitative characterization of this property.

Property \eqref{HMR} is exactly the error bound property \eqref{eb} for the function $f$ defined by
\begin{gather}\label{f}
f(x,y):=
\begin{cases}
(d(y,\by))^q & \text{if } (x,y)\in\gph F,\\
+\infty & \text{otherwise},
\end{cases}
\end{gather}
while constant \eqref{7CMR-g} coincides \eqref{rr2}.
Indeed, $f(\bx,\by)=0$,
conditions (P1) and (P2) are trivially satisfied, and $f(x,y)=0$ if and only if $y=\by$ and $x\in F^{-1}(\by)$.
Hence, $S(f)=F^{-1}(\by)$.
Observe also that condition $f(x,y)\downarrow0$ is equivalent to $y\to\by$ with $(x,y)\in\gph F$ and $y\ne\by$.

\subsection{Primal space slopes}
The nonlocal slopes \eqref{nls-f} and \eqref{uss-f} of $f$ in the current setting take the following form:
\begin{gather}\label{7nls}
|\nabla{F}|_{q,\rho}^{\diamond}(x,y):=
\sup_{\substack{(u,v)\ne(x,y)\\(u,v)\in\gph F}}
\frac{[(d(y,\by))^q-(d(v,\by))^q]_+}{d_\rho((u,v),(x,y))},
\\\label{7uss}
\overline{|\nabla{F}|}{}^{\diamond}_q(\bar{x},\by):=
\lim_{\rho\downarrow0}
\inf_{\substack{d(x,\bx)<\rho,\,d(y,\by)<\rho\\
(x,y)\in\gph F,\,x\notin F\iv(\by)}}\,
|\nabla{F}|{}^{\diamond}_{q,\rho}(x,y).
\end{gather}
We will call the above constants, respectively,
the \emph{nonlocal $(q,\rho)$-slope} of $F$ at $(x,y)\in\gph F$ and
the \emph{uniform strict $q$-slope} of $F$ at $(\bx,\by)$.

As the main primal space local tool, in this section we are going to use the \emph{$\rho$-slope} of $F$ at $(x,y)$:
\begin{gather}\label{srho}
|\nabla{F}|_{\rho}(x,y):=
\limsup_{\substack{(u,v)\to(x,y),\,
(u,v)\ne(x,y)\\
(u,v)\in\gph F}}
\frac{[d(y,\by)-d(v,\by)]_+} {d_\rho((u,v),(x,y))}
\end{gather}
defined for all $\rho>0$ and $(x,y)\in\gph F$.
Observe that constant \eqref{srho} does not depend on $q$.

Using some simple calculus, one can formulate the representation for the local $\rho$-slo\-pe \eqref{ls-f} in the case when $f$ is given by \eqref{f}.

\begin{proposition}\label{P3}
Suppose $(x,y)\in\gph F$, $y\ne\by$, $\rho>0$, and $f$ is given by \eqref{f}.
Then,
$$|\nabla{f}|_{\rho}(x,y)
=q(d(y,\by))^{q-1}\;|\nabla{F}|_{\rho}(x,y).$$
\end{proposition}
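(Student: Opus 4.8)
The plan is to reduce the local $\rho$-slope \eqref{ls-f} of $f$ to the $\rho$-slope \eqref{srho} of $F$ by a direct computation from the definitions, using the fact that $f(x,y)=(d(y,\by))^q$ is finite precisely on $\gph F$, together with a first-order (differentiability of $t\mapsto t^q$) argument to pass between the numerators $[(d(y,\by))^q-(d(v,\by))^q]_+$ and $[d(y,\by)-d(v,\by)]_+$. First I would write out
$$
|\nabla{f}|_{\rho}(x,y)=\limsup_{\substack{(u,v)\to(x,y),\,(u,v)\ne(x,y)}}\frac{[f(x,y)-f(u,v)]_+}{d_\rho((u,v),(x,y))},
$$
and observe that only points $(u,v)$ with $f(u,v)<f(x,y)$, hence with $(u,v)\in\gph F$ and (for $(u,v)$ close enough to $(x,y)$, since $y\ne\by$) with $v$ bounded away from $\by$, contribute to the $\limsup$; all other points give a zero numerator. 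So the $\limsup$ is effectively over $(u,v)\in\gph F$ near $(x,y)$, matching the index set in \eqref{srho}.

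The key step is the first-order reduction of the numerator. Since $y\ne\by$, the scalar $r_0:=d(y,\by)>0$; as $(u,v)\to(x,y)$ along $\gph F$ we have $r:=d(v,\by)\to r_0$, and by the mean value theorem applied to $t\mapsto t^q$ on the interval between $r$ and $r_0$,
$$
r_0^q-r^q=q\,\xi^{\,q-1}(r_0-r)
$$
for some $\xi$ between $r$ and $r_0$, with $\xi\to r_0$. Hence $[r_0^q-r^q]_+$ and $q\,r_0^{\,q-1}[r_0-r]_+$ differ by a factor tending to $1$. Dividing by $d_\rho((u,v),(x,y))$ and taking $\limsup$ therefore yields
$$
|\nabla{f}|_{\rho}(x,y)=q\,r_0^{\,q-1}\,\limsup_{\substack{(u,v)\to(x,y),\,(u,v)\ne(x,y)\\(u,v)\in\gph F}}\frac{[d(y,\by)-d(v,\by)]_+}{d_\rho((u,v),(x,y))}=q\,(d(y,\by))^{q-1}\,|\nabla{F}|_{\rho}(x,y),
$$
which is the claimed identity. (One must also note the harmless case where the inner $\limsup$ is $0$: then only points with $v=\by$, i.e. $d(v,\by)=d(y,\by)=r_0$ would be needed to make the numerator positive, impossible, so both sides are $0$ and there is no indeterminacy with the factor $r_0^{q-1}$, which is a finite positive constant anyway.)

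The main obstacle — such as it is — is purely bookkeeping: one must be careful that, when restricting the $\limsup$ in \eqref{ls-f} to $(u,v)\in\gph F$, points outside $\gph F$ genuinely contribute nothing (they do, since then $f(u,v)=+\infty$ and the positive part of $f(x,y)-f(u,v)$ is $0$), and that the "sandwich" factor $q\,\xi^{\,q-1}$ converges uniformly enough to $q\,r_0^{\,q-1}$ to be pulled out of the $\limsup$. Because $r_0>0$ is fixed and $\xi\to r_0$, this uniformity is automatic: for every $\eta>0$ there is a neighbourhood of $(x,y)$ on which $q\,\xi^{\,q-1}\in q\,r_0^{\,q-1}(1-\eta,1+\eta)$, so the $\limsup$ of the product equals $q\,r_0^{\,q-1}$ times the $\limsup$ of the remaining quotient. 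Letting $\eta\downarrow0$ finishes the argument. No completeness or subdifferential machinery is needed here; this is elementary calculus on top of the definitions.
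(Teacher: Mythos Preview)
Your proposal is correct and follows essentially the same approach as the paper's proof: restrict the $\limsup$ to $(u,v)\in\gph F$ (since $f(u,v)=+\infty$ otherwise), then linearize the numerator via the differentiability of $t\mapsto t^q$ at $r_0=d(y,\by)>0$ and pull the resulting factor $q\,r_0^{\,q-1}$ outside the $\limsup$. The only cosmetic difference is that the paper phrases the linearization as a Taylor expansion with an $o(d(v,y))$ remainder, whereas you invoke the mean value theorem with an intermediate point $\xi\to r_0$; these are equivalent formulations of the same first-order argument.
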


\begin{proof}
By \eqref{ls-f}, \eqref{f}, and \eqref{srho},
\begin{align*}
|\nabla{f}|_{\rho}(x,y)
&=\limsup_{\substack{(u,v)\to(x,y),\,
(u,v)\ne(x,y)\\(u,v)\in\gph F}}
\frac{[(d(y,\by))^q-(d(v,\by))^q]_+} {d_\rho((u,v),(x,y))}
\\
&=\limsup_{\substack{(u,v)\to(x,y),\,
(u,v)\ne(x,y)\\(u,v)\in\gph F}}
\frac{[q(d(y,\by))^{q-1}(d(y,\by)-d(v,\by))+o(d(v,y))]_+} {d_\rho((u,v),(x,y))}
\\
&=q(d(y,\by))^{q-1} \limsup_{\substack{(u,v)\to(x,y),\,
(u,v)\ne(x,y)\\(u,v)\in\gph F}}
\frac{[d(y,\by)-d(v,\by)]_+} {d_\rho((u,v),(x,y))}
\\
&=q(d(y,\by))^{q-1}\;|\nabla{F}|_{\rho}(x,y).
\end{align*}
In the above formula, $o(\cdot)$ is a function from $\R_+$ to $\R_+$ with the property $o(t)/t\to0$ as $t\downarrow0$.
\qed\end{proof}
The \emph{strict $q$-slope} and the \emph{modified strict $q$-slope} of $F$ at $(\bx,\by)$ are defined as follows:
\begin{gather}\label{q-ss}
\overline{|\nabla{F}|}{}_q(\bar{x},\by):=
q\lim_{\rho\downarrow0}
\inf_{\substack{d(x,\bx)<\rho,\,d(y,\by)<\rho\\
(x,y)\in\gph F,\,x\notin F\iv(\by)}}\,
(d(y,\by))^{q-1}\;|\nabla{F}|_{\rho}(x,y),
\\\label{aq-ss}
\overline{|\nabla{F}|}{}_q^{+}(\bar{x},\by):=
\lim_{\rho\downarrow0}
\inf_{\substack{d(x,\bx)<\rho,\,d(y,\by)<\rho\\
(x,y)\in\gph F,\,x\notin F\iv(\by)}}
\max\left\{q(d(y,\by))^{q-1}|\nabla{F}|_{\rho}(x,y), \frac{(d(y,\by))^q}{d(x,\bx)}\right\}.
\end{gather}

In view of Proposition~\ref{P3}, these constants coincide, respectively, with the corresponding strict outer slopes \eqref{ss-f} and \eqref{mss-f} of $f$.

\subsection{Subdifferential slopes}
In this subsection, $X$ and $Y$ are normed spaces.
We define, respectively, the \emph{subdifferential $\rho$-slope} and the \emph{approximate subdifferential $\rho$-slope} ($\rho>0$) of $F$ at $(x,y)\in\gph F$ with $y\ne\by$ as
\begin{gather}\label{6srs}
|\sd{F}|_{\rho}(x,y)
:=\inf_{\substack{x^*\in D^*F(x,y)(J(y-\by)+\rho\B^*)}}
\|x^*\|,
\\\label{6asrs}
|\sd{F}|^a_{\rho}(x,y)
:=\liminf_{\substack{v\to y-\by}}\
\inf_{\substack{x^*\in D^*F(x,y)(J(v)+\rho\B^*)}}
\|x^*\|,
\end{gather}
where $J$ is the duality mapping defined by \eqref{J}.
These constants do not depend on $q$.

In the rest of the section, when $Y$ is a normed space, we use the notation
$$\xi_q(y):=\|y-\by\|^{1-q}/q.$$

The next proposition gives representations for the subdifferential $\rho$-slope \eqref{sds-f} in the case when $f$ is given by \eqref{f}.

\begin{proposition}\label{6P1}
Suppose $(x,y)\in\gph F$, $y\ne\by$, $\rho>0$, and $f$ is given by \eqref{f}.
\begin{enumerate}
\item
If $X$ and $Y$ are Asplund and $\gph F$ is locally closed near $(x,y)$, then
\begin{gather*}
\hspace{-.3cm}
\ds|\sd{f}|_{\rho}(x,y)
\ge q\|y-\by\|^{q-1}\liminf_{\substack{ (x',y')\to(x,y),\,y''\to y\\(x',y')\in\gph F}}\; \inf_{\substack{x^*\in D^*F(x',y')(J(y''-\by)+\xi_q(y'')\rho\B^*)}} \|x^*\|.
\end{gather*}
\item
If either
$F$ is convex near $(x,y)$ and $q=1$ or $Y$ is Fr\'echet smooth, then
$$\ds|\sd{f}|_{\rho}(x,y)
=q\|y-\by\|^{q-1}\;|\sd{F}|_{\xi_q(y)\rho}(x,y).$$
\end{enumerate}
\end{proposition}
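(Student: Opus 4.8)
The plan is to compute $\sd f(x,y)$ for $f$ given by \eqref{f} and then match the result with definition \eqref{sds-f}. Write $f = g \circ h$, more precisely $f(x,y) = \varphi(\|y-\by\|) + \iota_{\gph F}(x,y)$, where $\varphi(t) = t^q/q$ for $t\ge 0$ (extended suitably) and $\iota_{\gph F}$ is the indicator function of the graph; actually it is cleaner to keep $f(x,y) = (d(y,\by))^q + \iota_{\gph F}(x,y)$ directly. Since $y\ne\by$, the map $(x,y)\mapsto (d(y,\by))^q = \|y-\by\|^q$ is, near $(x,y)$, either Fr\'echet differentiable (when $Y$ is Fr\'echet smooth, because then the norm is Fr\'echet differentiable away from $0$, and $t\mapsto t^q$ is smooth on $(0,\infty)$) or convex when $q=1$ and one uses $\|y-\by\|$ itself. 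First I would record, via the chain rule together with Lemma~\ref{ll01}, that $\sd(\|\cdot-\by\|^q)(y) = q\|y-\by\|^{q-1}J(y-\by)$ in the smooth case, and $= \|y-\by\|^{q-1}\cdot q \cdot \sd\|\cdot-\by\|(y) = q\|y-\by\|^{q-1} J(y-\by)$ in the convex $q=1$ case (where $\xi_q(y)=1$).

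For part (ii), I would then apply the appropriate \emph{exact} sum rule from Lemma~\ref{l02}: rule (ii) (differentiable sum rule) when $Y$ is Fr\'echet smooth, and rule (iii) (convex sum rule) when $F$ is convex near $(x,y)$ and $q=1$ — noting that in the latter case $\iota_{\gph F}$ is convex and $\|\cdot-\by\|$ is continuous, so the qualification condition holds. This gives
$$\sd f(x,y) = \bigl(0, q\|y-\by\|^{q-1}J(y-\by)\bigr) + N_{\gph F}(x,y).$$
Hence $(x^*,y^*)\in\sd f(x,y)$ iff there is $\eta^*\in J(y-\by)$ with $(x^*, y^* - q\|y-\by\|^{q-1}\eta^*)\in N_{\gph F}(x,y)$, i.e. $x^*\in D^*F(x,y)\bigl(q\|y-\by\|^{q-1}\eta^* - y^*\bigr)$. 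Plugging into \eqref{sds-f}: we need $\|y^*\| < \rho$, which after the substitution $z^* := q\|y-\by\|^{q-1}\eta^* - y^*$ becomes $z^*\in q\|y-\by\|^{q-1}J(y-\by) + \rho\B^*$. Factoring out $q\|y-\by\|^{q-1}$ and recalling $\xi_q(y)=\|y-\by\|^{1-q}/q$, the ball $\rho\B^*$ rescales to $\xi_q(y)\rho\B^*$, so $z^*/(q\|y-\by\|^{q-1})\in J(y-\by) + \xi_q(y)\rho\B^*$, and $\|x^*\| = q\|y-\by\|^{q-1}\cdot\|x^*/(q\|y-\by\|^{q-1})\|$. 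Taking the infimum over admissible $x^*$ then yields exactly $q\|y-\by\|^{q-1}|\sd F|_{\xi_q(y)\rho}(x,y)$, which is (ii).

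For part (i), the same decomposition applies but now $\iota_{\gph F}$ is only \lsc\ (graph locally closed), so only the \emph{fuzzy} sum rule (Lemma~\ref{l02}(i)) is available, with $f_1 = \|\cdot-\by\|^q$ Lipschitz near $(x,y)$ (since $y\ne\by$) and $f_2 = \iota_{\gph F}$ \lsc. Given $(x^*,y^*)\in\sd f(x,y)$ and $\eps>0$, the fuzzy rule produces nearby points — one of them, say $(x',y')\in\gph F$, carrying a Fr\'echet normal, and another, say $y''$ near $y$, at which the smooth-in-the-limit part of $\sd\|\cdot-\by\|^q$ is evaluated, namely an element of $q\|y''-\by\|^{q-1}J(y''-\by)$ up to an $\eps$-ball — together with an error term $\eps\B^*$. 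Running the same change of variables as above, but now keeping the perturbed points $(x',y')\to(x,y)$, $y''\to y$ and the slack $\eps\to 0$, leads to the $\liminf$ over $(x',y')\in\gph F\to(x,y)$ and $y''\to y$ of $\inf_{x^*\in D^*F(x',y')(J(y''-\by)+\xi_q(y'')\rho\B^*)}\|x^*\|$, multiplied by $q\|y-\by\|^{q-1}$; and because fuzzy rules only give one-sided containment, this produces the inequality $\ge$ rather than equality. The main obstacle is bookkeeping the rescaling of the perturbation balls: the $\eps\B^*$ from the fuzzy sum rule and the $\rho$ from $\|y^*\|<\rho$ must be combined, divided by $q\|y''-\by\|^{q-1}$, and shown to fit inside $\xi_q(y'')\rho\B^*$ in the limit — this requires continuity of $y''\mapsto\|y''-\by\|^{q-1}$ at $y\ne\by$ to absorb the discrepancy between $\xi_q(y)$ and $\xi_q(y'')$ as $\eps\downarrow0$, and care that the Lipschitz modulus of $\|\cdot-\by\|^q$ near $(x,y)$ stays bounded so that the fuzzy-rule perturbations of $f_1$ are controlled.
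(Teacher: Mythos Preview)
Your proposal is correct and follows essentially the same route as the paper: decompose $f=g+i_{\gph F}$ with $g(v)=\|v-\by\|^q$, compute $\sd g(v)=q\|v-\by\|^{q-1}J(v-\by)$ via the composition rule and Lemma~\ref{ll01}, then apply the fuzzy sum rule (Lemma~\ref{l02}(i)) for part~(i) and the exact differentiable/convex sum rules (Lemma~\ref{l02}(ii),(iii)) for part~(ii), followed by the same rescaling $z^*\mapsto z^*/(q\|y-\by\|^{q-1})$ that turns the $\rho$-ball into the $\xi_q(y)\rho$-ball. The bookkeeping issue you flag --- absorbing the $\eps\B^*$ error from the fuzzy rule and passing from $\xi_q(y)$ to $\xi_q(y'')$ via continuity of $y''\mapsto\|y''-\by\|^{q-1}$ at $y\ne\by$ --- is exactly what the paper handles in its chain of equalities leading to the final $\liminf$.
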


\begin{proof}
(i) Suppose that $X$ and $Y$ are normed spaces and observe that
\begin{gather*}\label{gF2}
f(u,v)=g(v)+ i_{\gph F}(u,v),\quad (u,v)\in X\times Y,
\end{gather*}
where $g(v)=\|v-\by\|^q$ and $i_{\gph F}$ is the \emph{indicator function} of $\gph F$: $i_{\gph F}(u,v)=0$ if $(u,v)\in\gph F$ and $i_{\gph F}(u,v)=\infty$ otherwise.
In its turn, function $g$ is a composition of two functions: $v\mapsto \|v-\by\|$ on $Y$ and $t\mapsto t^q$ on $\R_+$.
The latter function is Fr\'echet differentiable on $(0,\infty)$.
It follows from the composition rule for Fr\'echet subdifferentials (cf., e.g., \cite[Corollary~1.14.1]{Kru03.1}) that, for any $v\ne\by$,
\begin{gather*}
\sd g(v)=q\|v-\by\|^{q-1}J(v-\by).
\end{gather*}

If $X$ and $Y$ are Asplund, then the \emph{fuzzy sum rule} (Lemma~\ref{l02}) is applicable to function $f$:
for any $\eps>0$,
\begin{gather*}\label{5f01}
\sd f(x,y)
\subset\bigcup_{\substack{
\|(x',y')-(x,y)\|<\eps,\
(x',y')\in\gph F\\
(x^*,y^*)\in N_{\gph F}(x',y')\\
\|y''-y\|<\eps,\,v^*\in\sd g(y'')}} \{x^*,y^*+v^*\}+\eps\B_{X^*\times Y^*}.
\end{gather*}
By definition \eqref{sds-f},
\begin{align*}
|\sd{f}|_{\rho}(x,y)
&\ge\lim_{\eps\downarrow0}\Biggl(\inf_{\substack{
\|(x',y')-(x,y)\|<\eps,\
(x',y')\in\gph F\\
(x^*,y^*)\in N_{\gph F}(x',y')\\
\|y''-y\|<\eps,\,v^*\in\sd g(y'')\\ \|y^*+v^*\|<\rho}} \|x^*\|-\eps\Biggr)
\\
&=\lim_{\eps\downarrow0}\inf_{\substack{
\|(x',y')-(x,y)\|<\eps,\
(x',y')\in\gph F\\
x^*\in D^*F(x',y')(y^*)\\
\|y''-y\|<\eps,\,v^*\in\sd g(y'')\\ \|y^*-v^*\|<\rho}} \|x^*\|
\\
&=\lim_{\eps\downarrow0}\inf_{\substack{
\|(x',y')-(x,y)\|<\eps,\
(x',y')\in\gph F\\
x^*\in D^*F(x',y')(\sd g(y'')+\rho\B^*)\\
\|y''-y\|<\eps}} \|x^*\|
\\
&=\lim_{\eps\downarrow0}\inf_{\substack{
\|(x',y')-(x,y)\|<\eps\\
(x',y')\in\gph F\\
\|y''-y\|<\eps}}\inf_{\substack{x^*\in D^*F(x',y')(\sd g(y'')+\rho\B^*)}} \|x^*\|
\\
&=\liminf_{\substack{ (x',y')\to(x,y),\,y''\to y\\(x',y')\in\gph F}}\; \inf_{\substack{x^*\in D^*F(x',y')(\sd g(y'')+\rho\B^*)}} \|x^*\|
\\
&=\liminf_{\substack{ (x',y')\to(x,y),\,y''\to y\\(x',y')\in\gph F}}\; \inf_{\substack{x^*\in D^*F(x',y')(q\|y''-\by\|^{q-1}J(y''-\by)+\rho\B^*)}} \|x^*\|
\\
&=\liminf_{\substack{ (x',y')\to(x,y),\,y''\to y\\(x',y')\in\gph F}}\; \inf_{\substack{x^*\in D^*F(x',y')(J(y''-\by)+\xi_q(y'')\rho\B^*)}} q\|y''-\by\|^{q-1}\|x^*\|
\\
&=q\|y-\by\|^{q-1}\liminf_{\substack{ (x',y')\to(x,y),\,y''\to y\\(x',y')\in\gph F}}\; \inf_{\substack{x^*\in D^*F(x',y')(J(y''-\by)+\xi_q(y'')\rho\B^*)}} \|x^*\|.
\end{align*}

(ii) The proof is similar to that of (i).
Instead of the fuzzy sum rule, one can use either the differentiable sum rule (part (ii) of Lemma~\ref{l02}) when $Y$ is Fr\'echet smooth, or the convex sum rule (part (iii) of Lemma~\ref{l02}) when $F$ is convex and $q=1$ to write down the representation:
\begin{gather*}\label{5f05}
\sd f(x,y)
=\bigcup_{(x^*,y^*)\in N_{\gph F}(x,y)} \set{x^*,q\|y-\by\|^{q-1}J(y-\by)+y^*},
\end{gather*}
By definition \eqref{sds-f},
\begin{align*}
|\sd{f}|_{\rho}(x,y)
&=\inf_{\substack{
(x^*,y^*)\in N_{\gph F}(x,y)\\
v^*\in J(y-\by),\, \|y^*+q\|y-\by\|^{q-1}v^*\|<\rho}} \|x^*\|
\\
&=q\|y-\by\|^{q-1}\inf_{\substack{
(x^*,y^*)\in N_{\gph F}(x,y)\\
v^*\in J(y-\by),\, \|y^*+v^*\|<\xi_q(y)\rho}} \|x^*\|
\\
&=q\|y-\by\|^{q-1}\inf_{\substack{
x^*\in D^*F(x,y)(J(y-\by)+\xi_q(y)\rho\B^*)}} \|x^*\|
\\
&=q\|y-\by\|^{q-1}\;|\sd{F}|_{\xi_q(y)\rho}(x,y).
\end{align*}
\qed\end{proof}

\subsection{Strict subdifferential slopes}

Using subdifferential $\rho$-slopes \eqref{6srs} and \eqref{6asrs}, we define now the \emph{strict subdifferential $q$-slope} and the \emph{approximate strict subdifferential $q$-slope} of $F$ at $(\bx,\by)$:
\begin{align}\label{q-sss}
\overline{|\sd{F}|}{}_q(\bar{x},\by):=&
q\lim_{\rho\downarrow0}
\inf_{\substack{\|x-\bx\|<\rho,\,\|y-\by\|<\rho\\
(x,y)\in\gph F,\,x\notin F\iv(\by)}}\,
\|y-\by\|^{q-1}\;|\sd{F}|_{\xi_q(y)\rho}(x,y),
\\\label{q-asss}
\overline{|\sd{F}|}{}^a_q(\bar{x},\by):=&
q\lim_{\rho\downarrow0}
\inf_{\substack{\|x-\bx\|<\rho,\,\|y-\by\|<\rho\\
(x,y)\in\gph F,\,x\notin F\iv(\by)}}\,
\|y-\by\|^{q-1}\;|\sd{F}|^a_{\xi_q(y)\rho}(x,y)
\end{align}
and their modified analogues:
\begin{multline}\label{q-msss}
\overline{|\sd{F}|}{}_q^{+}(\bar{x},\by):=\\
\lim_{\rho\downarrow0}
\inf_{\substack{\|x-\bx\|<\rho,\,\|y-\by\|<\rho\\
(x,y)\in\gph F,\,
x\notin F\iv(\by)}}
\max\left\{q\|y-\by\|^{q-1}|\sd{F}|_{\xi_q(y)\rho}(x,y), \frac{\|y-\by\|^{q}}{\|x-\bx\|}\right\},
\end{multline}
\begin{multline}\label{q-masss}
\overline{|\sd{F}|}{}_q^{a+}(\bar{x},\by):=\\
\lim_{\rho\downarrow0}
\inf_{\substack{\|x-\bx\|<\rho,\,\|y-\by\|<\rho\\
(x,y)\in\gph F,\,
x\notin F\iv(\by)}}
\max\left\{q\|y-\by\|^{q-1}|\sd{F}|^a_{\xi_q(y)\rho}(x,y), \frac{\|y-\by\|^{q}}{\|x-\bx\|}\right\}.
\end{multline}

With the help of Proposition~\ref{6P1}, we can formulate representations for the strict outer subdifferential slopes \eqref{ssds-f} and \eqref{mssds-f} in the case when $f$ is given by \eqref{f}.

\begin{proposition}\label{P5}
Let $f$ be given by \eqref{f}.
\begin{enumerate}
\item
If $X$ and $Y$ are Asplund and $\gph F$ is locally closed near $(\bx,\by)$, then
\begin{gather*}
\overline{|\sd{f}|}{}^>(\bar{x},\by) \ge\overline{|\sd{F}|}{}^a_q(\bar{x},\by)
\quad\mbox{and}\quad \overline{|\sd{f}|}{}^{>+}(\bar{x},\by) \ge\overline{|\sd{F}|}{}^{a+}_q(\bar{x},\by).
\end{gather*}
\item
If either
$F$ is convex near $(\bx,\by)$ and $q=1$ or $Y$ is Fr\'echet smooth, then
\begin{gather*}
\overline{|\sd{f}|}{}^>(\bar{x},\by) =\overline{|\sd{F}|}{}_q(\bar{x},\by) \quad\mbox{and}\quad \overline{|\sd{f}|}{}^{>+}(\bar{x},\by) =\overline{|\sd{F}|}{}^+_q(\bar{x},\by).
\end{gather*}
\end{enumerate}
\end{proposition}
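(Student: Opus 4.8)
The plan is to derive Proposition~\ref{P5} from the pointwise relations of Proposition~\ref{6P1} by feeding them into the definitions \eqref{ssds-f} and \eqref{mssds-f} of the strict outer subdifferential slopes of $f$ and recognizing the outcome as the definitions \eqref{q-sss}--\eqref{q-masss} of the strict subdifferential $q$-slopes of $F$. This is the subdifferential counterpart of the primal identities recorded after \eqref{aq-ss}, with Proposition~\ref{6P1} playing the role that Proposition~\ref{P3} played there. Two preliminary observations are used throughout: for $f$ given by \eqref{f} the constraint $0<f(x,y)<\rho$ under the infima in \eqref{ssds-f}, \eqref{mssds-f} amounts to $(x,y)\in\gph F$ with $0<\|y-\by\|<\rho^{1/q}$; and $f(x,y)/\|x-\bx\|=\|y-\by\|^{q}/\|x-\bx\|$, which is precisely the nonlocal term occurring under the maximum in \eqref{mssds-f} and in \eqref{q-msss}, \eqref{q-masss}. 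Moreover, since the subdifferential $\rho$-slopes are monotone in $\rho$, the exact radius of the neighbourhood ($\rho^{1/q}$ versus $\rho$) and the exact subscript ($\xi_q(y)\rho$) are immaterial once $\rho\downarrow0$; and since $\xi_q(\cdot)$ and $t\mapsto t^{q-1}$ are continuous away from $\by$ and from $0$, the quantities $\xi_q(y'')$, $\|y''-\by\|^{q-1}$ appearing in Proposition~\ref{6P1}(i) may be traded for $\xi_q(y)$, $\|y-\by\|^{q-1}$ up to an error that disappears in the limit.

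For part (ii), I would substitute the exact identity $|\sd{f}|_\rho(x,y)=q\|y-\by\|^{q-1}\,|\sd{F}|_{\xi_q(y)\rho}(x,y)$ of Proposition~\ref{6P1}(ii) (valid, under either of its two hypotheses, for $(x,y)\in\gph F$ near $(\bx,\by)$ with $y\ne\by$) into \eqref{ssds-f}. This turns $\overline{|\sd{f}|}{}^{>}(\bx,\by)$ into $q\lim_{\rho\downarrow0}\inf\|y-\by\|^{q-1}|\sd{F}|_{\xi_q(y)\rho}(x,y)$, which, after the reparametrisations above, is exactly the right-hand side of \eqref{q-sss}, i.e. $\overline{|\sd{F}|}{}_q(\bx,\by)$. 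Carrying the same identity through the maximum in \eqref{mssds-f}, together with the term $\|y-\by\|^{q}/\|x-\bx\|$ kept unchanged, gives \eqref{q-msss}, i.e. $\overline{|\sd{f}|}{}^{>+}(\bx,\by)=\overline{|\sd{F}|}{}^+_q(\bx,\by)$.

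For part (i), the same route is followed with the inequality of Proposition~\ref{6P1}(i) in place of the identity: $|\sd{f}|_\rho(x,y)\ge q\|y-\by\|^{q-1}A_\rho(x,y)$, where $A_\rho(x,y)$ is the iterated lower limit, over $(x',y')\to(x,y)$ in $\gph F$ and $y''\to y$, of $\inf\{\|x^*\|\mid x^*\in D^*F(x',y')(J(y''-\by)+\xi_q(y'')\rho\B^*)\}$. Taking the infimum over outer points $(x,y)$ near $(\bx,\by)$ and letting $\rho\downarrow0$, the inner lower limit over $(x',y')$ --- whose points are themselves near $(\bx,\by)$, and, as $y''\to y\ne\by$, eventually bounded away from $\by$ --- is absorbed by the outer infimum in \eqref{q-asss}/\eqref{q-masss}, while the lower limit over $y''$ becomes the lower limit over $v=y''-\by\to y-\by$ built into the approximate subdifferential $\rho$-slope \eqref{6asrs}. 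After the continuity adjustments on $\xi_q$ and the power $q-1$ mentioned above, this yields $\overline{|\sd{f}|}{}^{>}(\bx,\by)\ge\overline{|\sd{F}|}{}^a_q(\bx,\by)$, and, with the extra maximum term carried along, $\overline{|\sd{f}|}{}^{>+}(\bx,\by)\ge\overline{|\sd{F}|}{}^{a+}_q(\bx,\by)$.

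The step I expect to be most delicate is, in part (i), the bookkeeping just sketched --- collapsing the double lower limit of Proposition~\ref{6P1}(i) onto the single-$v$ lower limit of \eqref{6asrs} with the inequality pointing in the correct direction, and absorbing the $(x',y')$-limit into the outer infimum without disturbing the $\|y-\by\|^{q-1}$ prefactor. A second, more elementary, point common to both parts is the matching of the ``outer'' index sets: the infima in \eqref{ssds-f}, \eqref{mssds-f} range over $(x,y)$ with $0<f(x,y)$, i.e. $(x,y)\in\gph F$ and $y\ne\by$, whereas \eqref{q-sss}--\eqref{q-masss} additionally require $x\notin F\iv(\by)$. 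This causes no change in the infima: just as for the primal slopes (cf. the text after \eqref{aq-ss}), a pair $(x,y)\in\gph F$ with $x\in F\iv(\by)$ and $y\ne\by$ is negligible because, testing a Fr\'echet subgradient $(x^*,y^*)\in\sd f(x,y)$ against $(x,\by)\in\gph F$, one finds $\|y^*\|$ bounded below by essentially $\|y-\by\|^{q-1}$, so for the small $\rho$ relevant in the limit such points contribute $+\infty$ to $|\sd{f}|_\rho(x,y)$. Granting these points, the argument is otherwise a direct transcription of Proposition~\ref{6P1}.
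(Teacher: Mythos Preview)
Your overall approach --- substitute the pointwise estimates of Proposition~\ref{6P1} into \eqref{ssds-f} and \eqref{mssds-f} and identify the outcome with \eqref{q-sss}--\eqref{q-masss} --- is exactly the paper's route; the ``delicate'' collapse of the $(x',y')$-limit into the outer infimum in part~(i) is carried out there along the lines you sketch, using local closedness of $F^{-1}(\by)$ to transfer $x\notin F^{-1}(\by)$ to nearby $x'$.

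The flaw is in your handling of the index-set mismatch you flag at the end. Your fix --- test $(x^*,y^*)\in\sd f(x,y)$ against $(x,\by)\in\gph F$ --- needs a \emph{global} subgradient inequality, which the Fr\'echet subdifferential does not supply: its defining condition is a $\liminf$ as $(u,v)\to(x,y)$, and $(x,\by)$ need not be approachable through graph points near $(x,y)$. In the convex case with $q=1$, convexity of $\gph F$ furnishes the segment from $(x,y)$ to $(x,\by)$ and your argument does yield $\|y^*\|\ge1$; outside that case it breaks down. Take $F:\R\rightrightarrows\R$ with $F(x)=\{0,x\}$ for $x\ge0$ and $F(x)=\{0\}$ for $x<0$, $(\bx,\by)=(0,0)$, $q=1$: here $F^{-1}(0)=\R$, so every infimum over $x\notin F^{-1}(\by)$ in \eqref{q-sss}--\eqref{q-masss} is vacuous ($=+\infty$), yet at $(x,x)$ with $x>0$ one computes $\sd f(x,x)=\{(x^*,y^*):x^*+y^*=1\}$, whence $|\sd f|_\rho(x,x)=1-\rho$ and $\overline{|\sd f|}{}^>(0,0)=1$. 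Both the inequality in (i) and the Fr\'echet-smooth case of (ii) therefore fail for this $F$. The paper's own proof writes the outer infimum over $x\notin F^{-1}(\by)$ from its very first line --- citing only ``By \eqref{ssds-f} and Proposition~\ref{6P1}(i)'' --- without explaining why the extra points with $x\in F^{-1}(\by)$, $y\ne\by$ may be discarded; so it shares the gap you detected, but your remedy does not close it.
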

\begin{proof}
(i) By \eqref{ssds-f} and Proposition~\ref{6P1}(i),
\begin{align*}
\overline{|\sd{f}|}{}^>(\bar{x},\by)
\ge&\lim_{\rho\downarrow0}
\inf_{\substack{\|(x,y)-(\bx,\by)\|<\rho\\
(x,y)\in\gph F,\,
x\notin F\iv(\by)}}
\\&
q\|y-\by\|^{q-1}
\lim_{\eps\downarrow0}
\inf_{\substack{\|(x',y')-(x,y)\|<\eps,\, \|y''-y\|<\eps\\x^*\in D^*F(x',y')(J(y''-\by)+\xi_q(y'')\rho\B^*)\\ (x',y')\in\gph F}}
\|x^*\|.
\end{align*}
For fixed $\rho\in(0,1)$ and $(x,y)$ with $x\notin F\iv(\by)$ and a sufficiently small $\eps>0$, it holds $B_\eps(x)\cap F\iv(\by)=\emptyset$, $\|(x,y)-(\bx,\by)\|+\eps<\rho$, and $\|y-\by\|^{q-1}\ge(1-\rho)\|y'-\by\|^{q-1}$ for all $y'\in B_\eps(y)$.
Besides, $\|y''-y'\|\le \|y''-y\|+\|y'-y\|<2\eps$.
Hence,
\begin{align*}
\overline{|\sd{f}|}{}^>(\bar{x},\by)
&\ge q\lim_{\rho\downarrow0}
\inf_{\substack{\|(x',y')-(\bx,\by)\|<\rho\\
(x',y')\in\gph F,\,
x'\notin F\iv(\by)}}
\\&
\qquad\qquad (1-\rho)\|y'-\by\|^{q-1}\lim_{\eps\downarrow0}
\inf_{\substack{\|y''-y'\|<2\eps\\x^*\in D^*F(x',y')(J(y''-\by)+\xi_q(y'')\rho\B^*)}}
\|x^*\|
\\&
=q\lim_{\rho\downarrow0}
\inf_{\substack{\|(x',y')-(\bx,\by)\|<\rho\\
(x',y')\in\gph F,\,
x'\notin F\iv(\by)}}\,\|y'-\by\|^{q-1}\, |\sd{F}|^a_{\xi_q(y')\rho}(x,y)
=\overline{|\sd{F}|}{}^a_q(\bar{x},\by).
\end{align*}
The proof of the other inequality goes along the same lines.

(ii) follows from Proposition~\ref{6P1}(ii) in view of definitions \eqref{ssds-f}, \eqref{mssds-f}, \eqref{q-sss}, and \eqref{q-msss}.
\qed\end{proof}

The next proposition provides some simplifications in the representations \eqref{q-sss}--\eqref{q-masss}.

\begin{proposition}\label{7P3}
The following assertions hold true:
\begin{enumerate}
\item
$\ds\overline{|\sd{F}|}{}_q(\bar{x},\by)\ge
q\lim_{\rho\downarrow0}
\inf_{\substack{\|x-\bx\|<\rho,\,\|y-\by\|<\rho\\
(x,y)\in\gph F,\,x\notin F\iv(\by)}}\,
\|y-\by\|^{q-1}\;|\sd{F}|_{\rho}(x,y)$;
\item
$\ds\overline{|\sd{F}|}{}^a_q(\bar{x},\by)\ge
q\lim_{\rho\downarrow0}
\inf_{\substack{\|x-\bx\|<\rho,\,\|y-\by\|<\rho\\
(x,y)\in\gph F,\,x\notin F\iv(\by)}}\,
\|y-\by\|^{q-1}\;|\sd{F}|^a_{\rho}(x,y)$;
\item
$\ds\overline{|\sd{F}|}{}_q^{+}(\bar{x},\by)\ge
\lim_{\rho\downarrow0}
\inf_{\substack{\|x-\bx\|<\rho,\,\|y-\by\|<\rho\\
(x,y)\in\gph F,\,
x\notin F\iv(\by)}}
\max\left\{q\|y-\by\|^{q-1}|\sd{F}|_{\rho}(x,y), \frac{\|y-\by\|^{q}}{\|x-\bx\|}\right\}$;
\item
$\ds\overline{|\sd{F}|}{}_q^{a+}(\bar{x},\by)\ge
\lim_{\rho\downarrow0}
\inf_{\substack{\|x-\bx\|<\rho,\,\|y-\by\|<\rho\\
(x,y)\in\gph F,\,
x\notin F\iv(\by)}}
\max\left\{q\|y-\by\|^{q-1}|\sd{F}|^a_{\rho}(x,y), \frac{\|y-\by\|^{q}}{\|x-\bx\|}\right\}$.
\end{enumerate}
If $q=1$, the above relations hold as equalities.
\end{proposition}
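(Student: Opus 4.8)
The plan is to deduce all four relations from one elementary observation: the subdifferential $\rho$-slopes in \eqref{6srs} and \eqref{6asrs} are \emph{nonincreasing} in $\rho$, together with the fact that the scaling $\xi_q(y)$ satisfies $\xi_q(y)\le1$ for $y$ near $\by$ when $q<1$ and $\xi_q(y)\equiv1$ when $q=1$.

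First I would record the monotonicity. For $0<\sigma_1\le\sigma_2$ one has $J(y-\by)+\sigma_1\B^*\subseteq J(y-\by)+\sigma_2\B^*$; this inclusion propagates through the coderivative, so by \eqref{6srs} $|\sd{F}|_{\sigma_1}(x,y)\ge|\sd{F}|_{\sigma_2}(x,y)$, and applying the same inclusion under the liminf in \eqref{6asrs} gives $|\sd{F}|^a_{\sigma_1}(x,y)\ge|\sd{F}|^a_{\sigma_2}(x,y)$. I would also note that whenever $(x,y)\in\gph F$ and $x\notin F\iv(\by)$ one necessarily has $y\ne\by$, so $\|y-\by\|^{q-1}$ and $\xi_q(y)$ are finite and strictly positive on the feasible sets in \eqref{q-sss}--\eqref{q-masss}.

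The case $q=1$ is then immediate, since $\xi_1(y)=\|y-\by\|^{0}/1\equiv1$, so $\xi_q(y)\rho=\rho$ identically and the right-hand sides of (i)--(iv) reduce termwise to the definitions \eqref{q-sss}, \eqref{q-asss}, \eqref{q-msss}, \eqref{q-masss}. For $q\in(0,1)$, I would set $\delta:=q^{1/(1-q)}>0$ and observe that $\|y-\by\|<\delta$ forces $\|y-\by\|^{1-q}<q$, i.e. $\xi_q(y)<1$ and hence $\xi_q(y)\rho\le\rho$. Then, for every $\rho\in(0,\delta)$ and every feasible $(x,y)$, the monotonicity recorded above yields the pointwise bounds $|\sd{F}|_{\xi_q(y)\rho}(x,y)\ge|\sd{F}|_{\rho}(x,y)$ and $|\sd{F}|^a_{\xi_q(y)\rho}(x,y)\ge|\sd{F}|^a_{\rho}(x,y)$. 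Multiplying by the positive factor $\|y-\by\|^{q-1}$ for (i) and (ii) — or inserting these bounds into the first argument of the maximum for (iii) and (iv), the other argument $\|y-\by\|^{q}/\|x-\bx\|$ not involving the slope — and then passing to the infimum over feasible $(x,y)$ and to the limit as $\rho\downarrow0$ delivers the four claimed inequalities.

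I do not anticipate a real obstacle; the only thing to watch is the bookkeeping of signs — the $\rho$-slopes decrease as their index increases while the exponent $q-1$ is negative — so one must check that the surviving multiplier $\|y-\by\|^{q-1}$ is nevertheless positive (it is) and that restricting to $\rho<\delta$ loses nothing, which it does not since the constants in \eqref{q-sss}--\eqref{q-masss} are limits as $\rho\downarrow0$.
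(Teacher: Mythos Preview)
Your argument is correct. Both your proof and the paper's rest on the same key observation --- that $\xi_q(y)=\|y-\by\|^{1-q}/q$ becomes small as $y\to\by$, so that eventually $\xi_q(y)\rho\le\rho$ --- but they package it differently. The paper argues by contrapositive: assuming $\overline{|\sd{F}|}{}_q(\bar{x},\by)<\gamma$, it picks a nearby $(x,y)$ and an explicit $x^*\in D^*F(x,y)(y^*)$ with $y^*$ close to $J(y-\by)$ at scale $\xi_q(y)\rho'$, then checks that this same $x^*$ is feasible for $|\sd F|_\rho$ at a larger scale $\rho$, forcing the right-hand side below $\gamma$. You instead abstract that element-chasing into the monotonicity lemma $\sigma\mapsto|\sd F|_\sigma(x,y)$ nonincreasing, and compare the two sides pointwise for a single scale $\rho<q^{1/(1-q)}$. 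Your route is a little cleaner and avoids the auxiliary parameters $\alpha,\rho'$; the paper's has the minor advantage of being written out at the level of individual $(x^*,y^*)$, which makes it transparent that no topological or continuity assumption on $D^*F$ is needed. Either way, the $q=1$ equality is immediate since $\xi_1\equiv1$.
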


\begin{proof}
We consider the first inequality.
The others can be treated in the same way.
If $\overline{|\sd{F}|}{}_q(\bar{x},\by)=\infty$, the inequality holds trivially.
Let $\overline{|\sd{F}|}{}_q(\bar{x},\by)<\ga<\infty$.
Fix an arbitrary $\rho>0$ and choose an $\al>0$ and a $\rho'\in(0,\rho)$ such that $\al(\rho')^{1-q}<q$ and $\rho'<\al\rho$.
By \eqref{q-sss}, there exists an $(x,y)\in\gph F$ with $\|x-\bx\|<\rho'$, $\|y-\by\|<\rho'$ and $x\notin F\iv(\by)$; a $y^*\in Y^*$, an $x^*\in D^*F(x,y)(y^*)$, and a $v^*\in J(y-\by)$ such that $\|v^*-y^*\|\le\|y-\by\|^{1-q}\rho'/q$ and $q\|y-\by\|^{q-1}\|x^*\|<\ga$.
Hence, $\|x-\bx\|<\rho$, $\|y-\by\|<\rho$, and $\|v^*-y^*\|\le\al\iv\rho'<\rho$, and consequently the \RHS\ of (i) is less than $\ga$.
The conclusion follows since $\ga$ was chosen arbitrarily.

If $q=1$, then the \RHS s of (i) and \eqref{q-sss} coincide.
\qed\end{proof}

The next statement summarizes the relationships between the $q$-slopes.
It is a consequence of the definitions and Propositions~\ref{P3} and \ref{P5}.

\begin{proposition}[Relationships between slopes]\label{7P4}
\begin{enumerate}
\item
$\ds|\nabla{F}|_{q,\rho}^{\diamond}(x,y)\ge
\max\left\{q(d(y,\by))^{q-1}\;|\nabla{F}|_{\rho}(x,y), \frac{(d(y,\by))^q} {d_\rho((x,y),(\bx,\by))}\right\}$\\
for all $\rho>0$ and $(x,y)\in\gph F$ with $y\ne\by$;
\item
$\ds\overline{|\nabla{F}|}{}_q^{\diamond}(\bar{x},\by)\ge
\overline{|\nabla{F}|}{}^+_{q}(\bx,\by)\ge
\overline{|\nabla{F}|}{}_{q}(\bx,\by)$.
\cnta
\end{enumerate}
Suppose $X$ and $Y$ are normed spaces.
\begin{enumerate}
\cntb
\item
$\overline{|\sd{F}|}{}^a_q(\bar{x},\by) \le\overline{|\sd{F}|}{}_q(\bar{x},\by)\le \overline{|\sd{F}|}{}^+_q(\bar{x},\by)$ and\\
$\overline{|\sd{F}|}{}^a_q(\bar{x},\by) \le\overline{|\sd{F}|}{}^{a+}_q(\bar{x},\by) \le\overline{|\sd{F}|}{}^+_q(\bar{x},\by)$;
\item
$\overline{|\nabla{F}|}{}_q(\bar{x},\by)
\ge
\overline{|\sd{F}|}{}^{a}_q(\bar{x},\by)$ and $\overline{|\nabla{F}|}{}^+_q(\bar{x},\by)
\ge
\overline{|\sd{F}|}{}^{a+}_q(\bar{x},\by)$\\
provided that $X$ and $Y$ are Asplund and $\gph F$ is locally closed near $(\bar{x},\by)$;
\item
$\ds\overline{|\nabla{F}|}{}_q(\bar{x},\by) =\overline{|\sd{F}|}{}_q(\bar{x},\by)$ and $\ds\overline{|\nabla{F}|}{}^+_q(\bar{x},\by) =\overline{|\sd{F}|}{}^+_q(\bar{x},\by)$\\
provided that $Y$ is Fr\'echet smooth and one of the following conditions is satisfied:
\begin{enumerate}
\item
$X$ is Asplund and $\gph F$ is locally closed near $(\bar{x},\by)$;
\item
$F$ is convex near $(\bar{x},\by)$;
\end{enumerate}
\item
$\ds\overline{|\nabla{F}|}{}^{\diamond}_1(\bar{x},\by)=
\overline{|\nabla{F}|}{}^{+}_1(\bx,\by)=
\overline{|\nabla{F}|}{}_1(\bx,\by)= \overline{|\sd{F}|}{}^{+}_1(\bx,\by)= \overline{|\sd{F}|}{}_1(\bx,\by)$\\
provided that
$F$ is convex near $(\bar{x},\by)$.
\end{enumerate}
\end{proposition}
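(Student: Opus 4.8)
Every claim will follow by transcribing, via the translation formulas of Propositions~\ref{P3} and~\ref{P5}, the corresponding assertion of Proposition~\ref{nc} about the particular function $f$ given by \eqref{f}. Recall from the discussion following \eqref{f} that for this $f$ one has $f(\bx,\by)=0$, $f_+=f$, $S(f)=F\iv(\by)$, conditions (P1)--(P2), and that $f$ is \lsc\ near $(\bx,\by)$ exactly when $\gph F$ is locally closed near $(\bx,\by)$; moreover \eqref{7nls} is \eqref{nls-f} written for this $f$ and \eqref{7uss} is \eqref{uss-f} written for this $f$, while Proposition~\ref{P3} (and the remark following \eqref{aq-ss}) gives $\overline{|\nabla{f}|}{}^{>}(\bx,\by)=\overline{|\nabla{F}|}{}_q(\bx,\by)$ and $\overline{|\nabla{f}|}{}^{>+}(\bx,\by)=\overline{|\nabla{F}|}{}_q^{+}(\bx,\by)$, and Proposition~\ref{P5} relates $\overline{|\sd{f}|}{}^{>}(\bx,\by)$ and $\overline{|\sd{f}|}{}^{>+}(\bx,\by)$ to the subdifferential $q$-slopes \eqref{q-sss}--\eqref{q-masss} (with an inequality toward the approximate ones in the Asplund case, and with equality in the Fr\'echet smooth or convex-$q=1$ case).

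Given these identifications, part (i) is Proposition~\ref{nc}(i) applied to $f$ (using $f(x,y)=(d(y,\by))^q$ and Proposition~\ref{P3}, legitimate since $y\ne\by$), and part (ii) is Proposition~\ref{nc}(ii) applied to $f$. For part (iii), the inequalities $\overline{|\sd{F}|}{}_q\le\overline{|\sd{F}|}{}_q^{+}$ and $\overline{|\sd{F}|}{}_q^{a}\le\overline{|\sd{F}|}{}_q^{a+}$ are immediate from comparing \eqref{q-sss} with \eqref{q-msss} and \eqref{q-asss} with \eqref{q-masss}, since the modified constants only insert a $\max$ with the nonnegative term $\|y-\by\|^q/\|x-\bx\|$; the remaining two reduce to the pointwise bound $|\sd{F}|_\rho^a(x,y)\le|\sd{F}|_\rho(x,y)$, which holds because $J(tz)=J(z)$ for every $t>0$, so that $v\mapsto\inf\{\|x^*\|:x^*\in D^*F(x,y)(J(v)+\rho\B^*)\}$ is constant, equal to $|\sd{F}|_\rho(x,y)$, along the ray $\{t(y-\by):t>0\}$ through $y-\by$, and hence its $\liminf$ as $v\to y-\by$ is at most $|\sd{F}|_\rho(x,y)$; feeding this into \eqref{q-asss} and \eqref{q-masss} gives $\overline{|\sd{F}|}{}_q^{a}\le\overline{|\sd{F}|}{}_q$ and $\overline{|\sd{F}|}{}_q^{a+}\le\overline{|\sd{F}|}{}_q^{+}$.

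For the primal--subdifferential relations I would argue as follows. In part (iv), local closedness of $\gph F$ makes $f$ \lsc\ near $(\bx,\by)$, so Proposition~\ref{nc}(\ref{nc.5})(a) yields $\overline{|\nabla{f}|}{}^{>}=\overline{|\sd{f}|}{}^{>}$ and $\overline{|\nabla{f}|}{}^{>+}=\overline{|\sd{f}|}{}^{>+}$; combined with $\overline{|\sd{f}|}{}^{>}\ge\overline{|\sd{F}|}{}_q^{a}$ and $\overline{|\sd{f}|}{}^{>+}\ge\overline{|\sd{F}|}{}_q^{a+}$ from Proposition~\ref{P5}(i) and with the primal identities above, this gives (iv). In part (v) we are given $Y$ Fr\'echet smooth, hence Asplund; under (a), $f$ is in addition \lsc, so Proposition~\ref{nc}(\ref{nc.5})(a) again supplies $\overline{|\nabla{f}|}{}^{>}=\overline{|\sd{f}|}{}^{>}$ and $\overline{|\nabla{f}|}{}^{>+}=\overline{|\sd{f}|}{}^{>+}$; under (b), $f=i_{\gph F}+g$ with $i_{\gph F}$ convex near $(\bx,\by)$ and $g(u,v):=\|v-\by\|^q$ Fr\'echet differentiable at every point with $v\ne\by$ (because $Y$ is Fr\'echet smooth), so near each outer point $(x,y)$ entering the outer slopes (for which $y\ne\by$) the function $f$ is locally the sum of a convex function and a Fr\'echet differentiable one, which is the structure underpinning Proposition~\ref{nc}(\ref{nc.5})(d), whence the same two equalities. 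In either case Proposition~\ref{P5}(ii) identifies $\overline{|\sd{f}|}{}^{>}=\overline{|\sd{F}|}{}_q$ and $\overline{|\sd{f}|}{}^{>+}=\overline{|\sd{F}|}{}_q^{+}$, and together with the primal identities this gives (v). Finally, for part (vi), $q=1$ and $F$ convex near $(\bx,\by)$ make $f(x,y)=d(y,\by)$ convex near $(\bx,\by)$, so Proposition~\ref{nc}(vii)(b) provides the chain $\overline{|\nabla{f}|}{}^{\diamond}=\overline{|\nabla{f}|}{}^{>+}=\overline{|\nabla{f}|}{}^{>}=\overline{|\sd{f}|}{}^{>+}=\overline{|\sd{f}|}{}^{>}$, which rewrites as (vi) by translating each term through Propositions~\ref{P3} and~\ref{P5}(ii) specialized to $q=1$.

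The substance of the proof is this bookkeeping of identifications; the one spot that truly demands care is part~(v)(b). It is tempting to quote convexity of $f$, but for $q<1$ the function $v\mapsto\|v-\by\|^q$ is not convex, so the convex sum rule is unavailable directly; one must instead use that a Fr\'echet smooth $Y$ renders it Fr\'echet differentiable away from $\by$ and run the exact--sum--rule version of the proof of Proposition~\ref{nc}(\ref{nc.5}), which is precisely why the hypothesis ``$Y$ Fr\'echet smooth'' is imposed in (v). The remaining thing to watch is the rescaling $\rho\mapsto\xi_q(y)\rho$ built into \eqref{q-sss}--\eqref{q-masss}: keeping track of it is what lets Proposition~\ref{P5} produce clean equalities rather than $\rho$-dependent estimates, and no idea beyond that is required.
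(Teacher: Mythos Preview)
Your proposal is correct and follows essentially the same approach as the paper, which simply states that the proposition ``is a consequence of the definitions and Propositions~\ref{P3} and \ref{P5}''. You have fleshed out this one-line justification by explicitly routing each item through Proposition~\ref{nc} via the identifications provided by Propositions~\ref{P3} and~\ref{P5}, and you correctly flagged the only delicate point, namely that in (v)(b) the literal hypothesis of Proposition~\ref{nc}(\ref{nc.5})(d) is not met (since $g(u,v)=\|v-\by\|^q$ fails to be differentiable along $v=\by$), but that only outer points $y\ne\by$ enter the strict outer slopes, so the exact sum rule is still available where needed.
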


\subsection{Limiting outer $q$-coderivative}
In this subsection, $X$ and $Y$ are finite dimensional normed linear spaces.

In finite dimensions, strict subdifferential $q$-slopes \eqref{q-sss} and \eqref{q-asss} coincide and can be equivalently expressed in terms of the \emph{limiting outer $q$-coderivative} $\overline{D}{}^{*>}_q F(\bx,\by)$ of $F$ at $(\bx,\by)$ defined by its graph as follows:
\begin{align}\notag
\gph\overline{D}{}^{*>}_q F& (\bx,\by):= \Big\{(y^*,x^*)\in Y^*\times X^*\mid
\\\notag
&\exists (x_k,y_k,x^*_k,y^*_k,v^*_k)\subset X\times Y\times X^*\times Y^*\times Y^*\;\mbox{such that}
\\\notag
&(x_k,y_k)\in\gph F,\;x_k\notin F\iv(\by),
\\\notag
&(y_k^*,x_k^*)\in\gph{D}{}^{*}F(x_k,y_k),\;
v^*_k\in J(y_k-\by),
\\\notag
&(x_k,y_k)\to(\bx,\by),\; y^*_k-q\|y_k-\by\|^{q-1}v^*_k\to0,
\\\label{D*31}
&\|y^*\|x^*_k\to x^*,\;
\mbox{if}\; y^*\ne0,\; \mbox{then}\; \frac{y^*_k}{\|y_k^*\|}\to\frac{y^*}{\|y^*\|}\Big\}.
\end{align}
This set is a closed cone in $X\times Y$.
Hence, the limiting outer $q$-coderivative is a closed positively homogeneous \SVM.

\begin{proposition}\label{ows3}
$\ds\overline{|\sd{F}|}{}_q(\bar{x},\by) =\overline{|\sd{F}|}{}^{a}_q(\bar{x},\by) =\inf\limits_ {\substack{x^*\in\overline{D}{}^{*>}_q F(\bx,\by) (\Sp^*_{Y^*})}} \|x^*\|.$
\end{proposition}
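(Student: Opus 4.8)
The plan is to establish a chain of equalities, reducing the strict subdifferential $q$-slope to an infimum over the limiting outer $q$-coderivative by a careful diagonal/limiting argument. First I would recall that since $X$ and $Y$ are finite dimensional, $Y$ is automatically Fr\'echet smooth (after endowing it with a smooth renorming, as the excerpt always assumes), so Proposition~\ref{7P4}(f) is not what we need directly; rather, the equality $\overline{|\sd{F}|}{}_q(\bar{x},\by)=\overline{|\sd{F}|}{}^{a}_q(\bar{x},\by)$ should follow from the fact that in finite dimensions the $\liminf_{v\to y-\by}$ in \eqref{6asrs} defining the approximate subdifferential $\rho$-slope can be absorbed into the outer limit defining the strict slope. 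Concretely, I would use Proposition~\ref{7P3}(i)--(ii) to pass to the simplified representations with $|\sd{F}|_\rho$ and $|\sd{F}|^a_\rho$ in place of $|\sd{F}|_{\xi_q(y)\rho}$, observe that $\xi_q$ is continuous and bounded away from $0$ and $\infty$ on a punctured neighbourhood of $\by$, and then note that the extra $v\to y-\by$ limit in the definition of $|\sd{F}|^a_\rho$ merely perturbs $(x,y)$ within the already-present outer infimum, so the two strict slopes agree.

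Second, for the main equality with the coderivative infimum, I would argue both inequalities separately. For ``$\ge$'': take any $\ga>\overline{|\sd{F}|}{}^a_q(\bar x,\by)$; unwinding definition \eqref{q-asss} together with Proposition~\ref{7P3}(ii), for each $k$ pick $\rho_k\downarrow0$ and points $(x_k,y_k)\in\gph F$ with $x_k\notin F\iv(\by)$, $\|x_k-\bx\|<\rho_k$, $\|y_k-\by\|<\rho_k$, together with $v_k''\to y_k-\by$ and $x_k^*\in D^*F(x_k,y_k)(J(v_k'')+\rho_k\B^*)$ such that $q\|y_k-\by\|^{q-1}\|x_k^*\|<\ga$. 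Write the coderivative condition as $(y_k^*,x_k^*)\in\gph D^*F(x_k,y_k)$ with $y_k^*=w_k^*+\rho_k b_k^*$ for some $w_k^*\in J(v_k'')$, $b_k^*\in\B^*$. Then $y_k^*-q\|y_k-\by\|^{q-1}v_k^*\to0$ for a suitable choice $v_k^*\in J(y_k-\by)$ (using continuity of $J$ along $v_k''\to y_k-\by$ and the fact that duality mappings in finite-dimensional smooth spaces are single-valued and continuous). Normalizing: set $\tilde x_k^*:=q\|y_k-\by\|^{q-1}x_k^*$, which is bounded, so along a subsequence $\tilde x_k^*\to x^*$ with $\|x^*\|\le\ga$; and the normalized $y_k^*/\|y_k^*\|$ converges along a further subsequence to some unit $y^*$ (or the $y^*=0$ case is handled separately). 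This produces $(y^*,x^*)\in\gph\overline{D}{}^{*>}_qF(\bx,\by)$ after matching the scaling in \eqref{D*31} — here one must be careful that $\|y^*\|x_k^*\to x^*$ in the sense of \eqref{D*31} corresponds exactly to $\tilde x_k^*\to x^*$, which forces identifying the normalization constant; I expect to exploit positive homogeneity of the coderivative to rescale. Hence $\inf_{x^*\in\overline{D}{}^{*>}_qF(\bx,\by)(\Sp^*_{Y^*})}\|x^*\|\le\ga$, and letting $\ga\downarrow$ gives ``$\ge$''.

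For ``$\le$'': conversely, take $(y^*,x^*)\in\gph\overline{D}{}^{*>}_qF(\bx,\by)$ with $\|y^*\|=1$ (so $x^*\in\overline{D}{}^{*>}_qF(\bx,\by)(\Sp^*_{Y^*})$), realize it by sequences as in \eqref{D*31}, and read off that for large $k$ the point $(x_k,y_k)$ and the dual element $y_k^*$ satisfy, up to $o(1)$ errors, $x_k^*\in D^*F(x_k,y_k)(q\|y_k-\by\|^{q-1}v_k^*+\eps_k\B^*)$ with $v_k^*\in J(y_k-\by)$ and $\eps_k\downarrow0$; rescaling by $\|y_k-\by\|^{1-q}/q$ puts this in the form governing $|\sd{F}|^a_{\xi_q(y_k)\rho}$ for any fixed $\rho>0$ once $k$ is large. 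Then $q\|y_k-\by\|^{q-1}\|x_k^*\|\to\|y^*\|\,\|x^*\|=\|x^*\|$ (here I use the $\|y^*\|x_k^*\to x^*$ clause together with $\|y^*\|=1$), and since this quantity bounds the relevant infimum in \eqref{q-asss} from above along the sequence, we get $\overline{|\sd{F}|}{}^a_q(\bx,\by)\le\|x^*\|$; taking the infimum over such $(y^*,x^*)$ yields ``$\le$''.

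The main obstacle I anticipate is the bookkeeping of the normalization in \eqref{D*31}: the clause ``$\|y^*\|x_k^*\to x^*$, and if $y^*\ne0$ then $y_k^*/\|y_k^*\|\to y^*/\|y^*\|$'' encodes a specific rescaling convention, and matching it precisely against the unnormalized dual pairs arising from the coderivative (where $x_k^*$ itself may blow up or vanish as $\|y_k-\by\|^{q-1}\to\infty$ when $q<1$) requires care — in particular one must track that it is $q\|y_k-\by\|^{q-1}x_k^*$, not $x_k^*$, that stays bounded, and reconcile this with the factor $\|y^*\|$ via positive homogeneity of the cone $\gph\overline{D}{}^{*>}_qF(\bx,\by)$. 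The secondary subtlety is the $y^*=0$ case, where one cannot normalize $y_k^*$; there the defining sequences still force $y_k^*\to0$, hence (via $v_k^*\in J(y_k-\by)$ being a unit vector) $q\|y_k-\by\|^{q-1}\to0$ is impossible for $q<1$ near $\by$, so this case either does not contribute or contributes only via a limiting argument that I would handle by a direct estimate. Everything else — continuity and single-valuedness of $J$ in the smooth finite-dimensional setting, boundedness/compactness to extract convergent subsequences, and the interchange of the $\liminf_{v\to y-\by}$ with the outer infimum — is routine in $\mathbb R^n$.
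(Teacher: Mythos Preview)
Your first step contains a genuine error: the claim that $\xi_q(y)=\|y-\by\|^{1-q}/q$ is ``bounded away from $0$ and $\infty$ on a punctured neighbourhood of $\by$'' is false when $q<1$, since then $\xi_q(y)\to0$ as $y\to\by$. This is exactly why Proposition~\ref{7P3} gives only inequalities (not equalities) for $q<1$, so you cannot use it to ``pass to simplified representations.'' Also, the $\liminf_{v\to y-\by}$ in \eqref{6asrs} perturbs the argument of $J$, not the base point $(x,y)$, so it is not absorbed by the outer infimum over $(x,y)$; the correct tool is the upper semicontinuity of $J$ in finite dimensions. Your worry about the $y^*=0$ case is also misdirected: the definition \eqref{D*31} does \emph{not} force $y_k^*\to0$ when $y^*=0$---the directional clause is simply dropped and $\|y^*\|x_k^*\to x^*$ forces $x^*=0$. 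In fact, for $q\le1$ the constraint $y_k^*-q\|y_k-\by\|^{q-1}v_k^*\to0$ with $\|v_k^*\|=1$ keeps $\|y_k^*\|$ away from zero, so the case does not arise in the ``$\ge$'' direction anyway.

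The paper sidesteps your first step entirely. It proves $\overline{|\sd F|}{}_q(\bx,\by)=\inf\{\|x^*\|:x^*\in\overline{D}{}^{*>}_qF(\bx,\by)(\Sp^*_{Y^*})\}$ directly, and only afterwards deduces the equality with $\overline{|\sd F|}{}^a_q(\bx,\by)$ in one line via upper semicontinuity of $J$. The normalization issue you flag is handled cleanly: starting from a minimizing sequence for the slope with $\hat x_k^*\in D^*F(x_k,y_k)(\hat v_k^*+\xi_q(y_k)\rho_k\B^*)$ and $q\|y_k-\by\|^{q-1}\|\hat x_k^*\|\to\overline{|\sd F|}{}_q(\bx,\by)$, one rescales via positive homogeneity of $N_{\gph F}$ to $x_k^*:=q\|y_k-\by\|^{q-1}\hat x_k^*$, $y_k^*:=q\|y_k-\by\|^{q-1}\hat v_k^*+\rho_k w_k^*$; then $(y_k^*,x_k^*)\in\gph D^*F(x_k,y_k)$, $y_k^*-q\|y_k-\by\|^{q-1}v_k^*\to0$, and $\|x_k^*\|$ is bounded. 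Compactness yields subsequential limits $x_k^*\to x^*$ and $y_k^*/\|y_k^*\|\to y^*\in\Sp^*_{Y^*}$, which fit \eqref{D*31} with $\|y^*\|=1$, so the clause $\|y^*\|x_k^*\to x^*$ is simply $x_k^*\to x^*$---no further rescaling is needed.
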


\begin{proof}
We first prove that
\begin{gather}\label{ows3.1}
\ds\overline{|\sd{F}|}{}_q(\bar{x},\by) =\inf\limits_ {\substack{x^*\in\overline{D}{}^{*>}_q F(\bx,\by) (\Sp^*_{Y^*})}} \|x^*\|.
\end{gather}
Let $(y^*,x^*)\in\gph\overline{D}{}^{*>}_qF(\bx,\by)$, $\|y^*\|=1$, and $\rho>0$.
Choose an arbitrary sequence $(x_k,y_k,x^*_k,y^*_k,v^*_k,\al_k)$ corresponding to $(y^*,x^*)$ in accordance with definition~\eqref{D*31}.
Then, for a sufficiently large $k$, it holds $\|x_k-\bx\|<\rho$, $\|y_k-\by\|<\rho$, $(x_k,y_k)\in\gph F$, $x_k\notin F\iv(\by)$, $y_k^*\in q\|y_k-\by\|^{q-1}J(y_k-\by)+\rho\B^*$, $x_k^*\in D^*F(x_k,y_k)(y_k^*)$, and $\|x_k^*-x^*\|<\rho$.
Hence, by \eqref{6srs} and \eqref{q-sss}, $\overline{|\sd{F}|}{}_q(\bar{x},\by) \le\|x_k^*\|<\|x^*\|+\rho$ and consequently,
\begin{gather}\label{ows3.2}
\overline{|\sd{F}|}{}_q(\bar{x},\by) \le\inf\limits_ {\substack{x^*\in\overline{D}{}^{*>}_qF(\bx,\by) (\Sp^*_{Y^*})}} \|x^*\|.
\end{gather}

Conversely, by definitions \eqref{6srs} and \eqref{q-sss}, there exist sequences $(x_k,y_k)\to(\bx,\by)$ with $(x_k,y_k)\in\gph F$, $x_k\notin F\iv(\by)$ and $(x_k^*,y_k^*,v_k^*)\in X^*\times Y^*\times Y^*$ with $(y_k^*,x_k^*)\in\gph{D}{}^{*}F(x_k,y_k)$,  $v^*_k\in J(y_k-\by)$ such that $y^*_k-q\|y_k-\by\|^{q-1}v^*_k\to0$ and $\|x_k^*\|\to\overline{|\sd{F}|}{}_q(\bar{x},\by)$.
Passing to subsequences if necessary, we can assume that $x_k^*\to x^*\in X^*$ and either $y_k^*\ne0$ for all $k\in\N$, or $y_k^*=0$ for all $k\in\N$. In the first case, we can assume that $y_k^*/\|y_k^*\|\to y^*\in\Sp_{Y^*}^*$, and consequently, by definition \eqref{D*31}, $(x^*,y^*)\in\gph\overline{D}{}^{*>}_qF(\bx,\by)$.
In the second case, $(x^*,y^*)\in\gph\overline{D}{}^{*>}_qF(\bx,\by)$ for any $y^*\in Y^*$.
Hence,
$$
\overline{|\sd{F}|}{}_q(\bar{x},\by)=\|x^*\| \ge\inf\limits_ {\substack{x^*\in\overline{D}{}^{*>}_qF(\bx,\by) (\Sp^*_{Y^*})}} \|x^*\|.
$$
This together with \eqref{ows3.2} proves \eqref{ows3.1}.

The remaining equality
\begin{gather*}
\ds\overline{|\sd{F}|}{}^a_q(\bar{x},\by) =\inf\limits_ {\substack{x^*\in\overline{D}{}^{*>}_q F(\bx,\by) (\Sp^*_{Y^*})}} \|x^*\|
\end{gather*}
follows from comparing definitions \eqref{6srs} and \eqref{6asrs} thanks to the upper semicontinuity of the duality mapping.
\qed\end{proof}

\begin{remark}
The above definition of the limiting outer $q$-coderivative follows the original idea of limiting coderivatives; cf. \cite{Mor06.1}.
In particular, it defines a positively homogeneous \SVM\ with a not necessarily convex graph.
However, there are also several important distinctions.
Firstly, similar to the corresponding definition introduced in \cite{IofOut08}, this is an ``outer'' object: only sequences $(x_k,y_k)\in\gph F$ with $x_k$ components lying outside of the set $F\iv(\by)$ are taken into account.
Secondly, as it is reflected in its name, the limiting outer $q$-coderivative depends on $q$.
It is not excluded in the definition that $\|v_k^*\|\to\infty$ and consequently $\|y_k^*\|\to\infty$, and nevertheless the sequence $(y_k^*)$ produces a finite element $y^*\in Y$.
Note that restricting definition \eqref{D*31} to only ``outer'' points, while reasonable for characterizing the (H\"older) metric subregularity properties, prevents one from developing a reasonable calculus of such objects.
If this restriction in the definition is dropped, one can still use the resulting limiting coderivatives for formulating certain (stronger!) sufficient conditions.
\end{remark}

\begin{remark}
Analyzing the definition of the limiting outer $q$-coderivative and the proof of Proposition~\ref{ows3}, one can notice that there is no need to care much about the convergence of the sequences in $Y^*$.
The limiting outer $q$-coderivative in Proposition~\ref{ows3} can be replaced by the corresponding limiting set in $X^*$ only:
\begin{align}\notag
{S}{}^{*>}_qF& (\bx,\by):= \{x^*\in X^*\mid
\exists (x_k,y_k,x^*_k,y^*_k,v^*_k)\subset X\times Y\times X^*\times Y^*\times Y^*
\\\notag
&\mbox{such that}\;
(x_k,y_k)\in\gph F,\;x_k\notin F\iv(\by),\;
(y_k^*,x_k^*)\in\gph{D}{}^{*}F(x_k,y_k),\; \\\notag
&v^*_k\in J(y_k-\by),\;
(x_k,y_k)\to(\bx,\by),\; y^*_k-q\|y_k-\by\|^{q-1}v^*_k\to0,\; x^*_k\to x^*\}.
\end{align}
Proposition~\ref{ows3} remains true if $\overline{D}{}^{*>}_qF(\bx,\by)(\Sp^*_{Y^*})$ there is replaced by ${S}{}^{*>}_qF(\bx,\by)$.
This way one can also relax the assumption that $\dim Y<\infty$.
\end{remark}

\begin{remark}\label{Rem9}
One can define also a $q$-coderivative (indirect) counterpart of the modified strict subdifferential $q$-slopes \eqref{q-msss} and \eqref{q-masss}.
It is sufficient to add to the list of properties in definition \eqref{D*31} an additional requirement that $\|y_k-\by\|^q/\|x_k-\bx\|\to0$ as $k\to\infty$.
The corresponding set can be used for characterizing metric $q$-sub\-regularity.
However, the analogues of the equalities in Proposition~\ref{ows3} would not hold for it.
\end{remark}

\section{Criteria of H\"older metric subregularity}\label{Criteria}
In this section, if not specified otherwise, $X$ and $Y$ are metric spaces.
The next theorem is a consequence of Theorem~\ref{3T1}.
It is invariant on the choice of an admissible metric on $X\times Y$ (see Remark~\ref{rm2}).

\begin{theorem}\label{7T1}
\begin{enumerate}
\item
$\sr_q[F](\bx,\by)\le
\overline{|\nabla{F}|}{}^{\diamond}_q(\bar{x},\by)$;
\item
if $X$ and $Y$ are complete and $\gph F$ is locally closed near $(\bar{x},\by)$,
then
$\sr_q[F](\bx,\by)=
\overline{|\nabla{F}|}{}^{\diamond}_q(\bar{x},\by)$.
\end{enumerate}
\end{theorem}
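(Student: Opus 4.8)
The plan is to derive Theorem~\ref{7T1} directly from Theorem~\ref{3T1} by specializing it to the function $f$ defined in \eqref{f}. The crucial preliminary observation, already recorded in the text surrounding \eqref{f}, is that for this $f$ one has $f(\bx,\by)=0$, conditions (P1) and (P2) hold trivially, $S(f)=F^{-1}(\by)$, and $f(x,y)\downarrow0$ is equivalent to $(x,y)\in\gph F$, $y\to\by$, $y\ne\by$. Moreover, by Proposition~\ref{pr-f}, the error bound modulus $\Er f(\bx,\by)$ defined by \eqref{rr2} admits the representation as a $\liminf$ over $x\to\bx$ with $f(x,y)\downarrow0$; substituting the explicit form of $f$ and using $S(f)=F^{-1}(\by)$ gives exactly
$$
\Er f(\bx,\by)=\liminf_{\substack{x\to\bx,\,(x,y)\in\gph F,\,y\to\by,\,y\ne\by}}\frac{(d(y,\by))^q}{d(x,F^{-1}(\by))}=\liminf_{\substack{x\to\bx\\x\notin F^{-1}(\by)}}\frac{(d(\by,F(x)))^q}{d(x,F^{-1}(\by))}=\sr_q[F](\bx,\by),
$$
where the middle equality uses that for fixed $x\notin F^{-1}(\by)$ the infimum of $(d(y,\by))^q$ over $y\in F(x)$ equals $(d(\by,F(x)))^q$ (monotonicity of $t\mapsto t^q$), and that the outer $\liminf$ over such $y$ combined with $x\to\bx$ reproduces the one-variable form. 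This identification $\Er f(\bx,\by)=\sr_q[F](\bx,\by)$ is stated in the excerpt (``constant \eqref{7CMR-g} coincides \eqref{rr2}'') and I would simply spell it out.

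Next I would match the right-hand sides. By definition \eqref{uss-f}, $\overline{|\nabla{f}|}{}^{\diamond}(\bx,\by)$ is built from the nonlocal $\rho$-slopes $|\nabla f|_\rho^{\diamond}(x,y)$ of \eqref{nls-f}; plugging in $f$ from \eqref{f} turns the numerator $[f(x,y)-f_+(u,v)]_+$ into $[(d(y,\by))^q-(d(v,\by))^q]_+$ and forces $(u,v)\in\gph F$ (points outside $\gph F$ give $f_+(u,v)=+\infty$, hence zero contribution), which is precisely \eqref{7nls}, i.e. $|\nabla f|_\rho^{\diamond}(x,y)=|\nabla F|_{q,\rho}^{\diamond}(x,y)$ for $(x,y)\in\gph F$ with $y\ne\by$. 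Likewise the index set in \eqref{uss-f}, namely $d(x,\bx)<\rho$, $0<f(x,y)<\rho$, becomes $d(x,\bx)<\rho$, $(x,y)\in\gph F$, $x\notin F^{-1}(\by)$, $0<(d(y,\by))^q<\rho$; since $0<(d(y,\by))^q<\rho$ is equivalent to $0<d(y,\by)<\rho^{1/q}$ and the defining limit is taken as $\rho\downarrow0$, the family of index sets is cofinal with the one in \eqref{7uss} (just reparametrize $\rho$), so $\overline{|\nabla f|}{}^{\diamond}(\bx,\by)=\overline{|\nabla F|}{}^{\diamond}_q(\bx,\by)$. I would note explicitly that this reparametrization does not affect the $\rho\downarrow0$ limit because $\rho\mapsto\rho^{1/q}$ is a homeomorphism of $(0,\infty)$ fixing $0$.

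With these two identifications in hand, both parts follow immediately: part (i) of Theorem~\ref{7T1} is part (i) of Theorem~\ref{3T1} applied to $f$, and part (ii) is part (ii) of Theorem~\ref{3T1}, once we verify its hypothesis — completeness of $X,Y$ is assumed, and ``$f_+$ is \lsc\ near $(\bx,\by)$'' translates, for $f$ given by \eqref{f}, exactly into ``$\gph F$ is locally closed near $(\bx,\by)$'': indeed $f_+=f$ here (as $f\ge0$), and lower semicontinuity of $(x,y)\mapsto(d(y,\by))^q+i_{\gph F}(x,y)$ near $(\bx,\by)$ is equivalent to closedness of $\gph F$ near $(\bx,\by)$, since $(d(\cdot,\by))^q$ is continuous. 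Finally I would invoke Remark~\ref{rm2} to record that the statement is independent of the choice of admissible $\rho$-metric on $X\times Y$. I do not expect a genuine obstacle here; the only point requiring a little care is the cofinality/reparametrization argument linking the index set $0<f(x,y)<\rho$ to $d(y,\by)<\rho$ in passing from \eqref{uss-f} to \eqref{7uss}, and the verification that local closedness of $\gph F$ is the right rendering of lower semicontinuity of $f_+$ — both are routine but should be stated rather than left implicit.
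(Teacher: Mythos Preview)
Your approach matches the paper's exactly: the text preceding Theorem~\ref{7T1} declares it ``a consequence of Theorem~\ref{3T1}'', and the identifications $\Er f(\bx,\by)=\sr_q[F](\bx,\by)$ and $\overline{|\nabla f|}{}^{\diamond}(\bx,\by)=\overline{|\nabla F|}{}^{\diamond}_q(\bx,\by)$ (together with the translation of the lower-semicontinuity hypothesis) are precisely what the paragraphs around \eqref{f}, \eqref{7nls}, \eqref{7uss} assert without further proof; you simply spell them out.

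One small slip to repair: the condition $0<f(x,y)$ translates to $(x,y)\in\gph F$ and $y\ne\by$, \emph{not} to $x\notin F^{-1}(\by)$. Hence, after your $\rho\mapsto\rho^{1/q}$ reparametrization, the index set coming from \eqref{uss-f} is still larger than the one in \eqref{7uss}: it also contains points $(x,y)\in\gph F$ with $x\in F^{-1}(\by)$ and $y\ne\by$. These extra points are harmless --- taking $(u,v)=(x,\by)$ in \eqref{7nls} gives
\[
|\nabla F|_{q,\rho}^{\diamond}(x,y)\;\ge\;\frac{(d(y,\by))^q}{\rho\,d(y,\by)}\;=\;\frac{(d(y,\by))^{q-1}}{\rho}\;\ge\;\rho^{-1/q}\;\to\;\infty\quad(\rho\downarrow0),
\]
so they never affect the limiting infimum --- but this step should be stated rather than absorbed into the (incorrect) claim that the two index sets coincide. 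The paper makes the same identification without comment, so this is a point where your write-up can be more careful than the original.
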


The estimate in the next proposition can be useful when formulating necessary conditions of $q$-subregu\-larity.

\begin{proposition}\label{itu}
Suppose $X$ and $Y$ are normed spaces and $F$ is convex near $(\bx,\by)$.
Then, $q\cdot\sr_q[F](\bar{x},\by) \le\overline{|\sd{F}|}_q(\bar{x},\by)$. \end{proposition}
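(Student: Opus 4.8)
The plan is to combine the basic characterization of H\"older metric subregularity in Theorem~\ref{7T1} with the subdifferential representation of the strict outer slope, using the convexity hypothesis to make all the relevant slopes coincide. More precisely, for the function $f$ defined by \eqref{f}, one has $\sr_q[F](\bx,\by)=\Er f(\bx,\by)$, and Theorem~\ref{3T1}(i) gives $\Er f(\bx,\by)\le\overline{|\nabla{f}|}{}^{\diamond}(\bar{x},\by)$. So it suffices to estimate $\overline{|\nabla{f}|}{}^{\diamond}(\bar{x},\by)$ from above by $q^{-1}\overline{|\sd{F}|}_q(\bar{x},\by)$. Since $F$ is convex near $(\bx,\by)$, the function $f$ of \eqref{f} is convex near $(\bx,\by)$ (as a composition of the convex norm-power with the graph), so Proposition~\ref{nc}(vii)(b) applies and yields $\overline{|\nabla{f}|}{}^{\diamond}(\bar{x},\by)=\overline{|\sd{f}|}{}^{>}(\bx,\by)$.

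**Next I would** invoke the second part of Proposition~\ref{P5} to identify $\overline{|\sd{f}|}{}^{>}(\bar{x},\by)$ with a slope of $F$. There is a mismatch to be careful about: Proposition~\ref{P5}(ii) requires either $q=1$ with $F$ convex, or $Y$ Fr\'echet smooth --- it does not directly cover the case $q<1$ in a general normed space. The route around this is to fall back on Proposition~\ref{6P1}(ii), whose proof uses only the convex sum rule (Lemma~\ref{l02}(iii)) when $q=1$, but for $q<1$ one needs the differentiability of $t\mapsto t^q$ \emph{together with} a sum rule; in the purely convex setting one can instead apply the convex subdifferential composition/chain rule directly to $g(v)=\|v-\by\|^q$, which is still convex (as a monotone convex function of the convex norm when $q\le 1$? --- no, $t\mapsto t^q$ is concave), so in fact $f$ need not be convex when $q<1$. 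Hence the natural reading is that this proposition is really about the quantity $q\cdot\sr_q$ compared with $\overline{|\sd{F}|}_q$ via the chain $q\cdot\sr_q[F](\bx,\by)=q\,\Er f(\bx,\by)\le q\,\overline{|\nabla{f}|}{}^{\diamond}(\bar x,\by)$ and then bounding this last quantity using the pointwise estimate $|\nabla{f}|_{\rho}^{\diamond}(x,y)=|\nabla{f}|_{\rho}(x,y)=q(d(y,\by))^{q-1}|\nabla F|_\rho(x,y)$ from Proposition~\ref{nc}(vii)(a) and Proposition~\ref{P3}, combined with the subdifferential bound $|\nabla{f}|_\rho(x,y)\le|\sd f|_{\rho^2}(x,y)+\rho$ from Proposition~\ref{nc}(iv) and the identity in Proposition~\ref{6P1}(ii).

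**Concretely, the steps in order are:** (1) set $f$ as in \eqref{f}, note $\sr_q[F](\bx,\by)=\Er f(\bx,\by)$ and $S(f)=F\iv(\by)$; (2) apply Theorem~\ref{3T1}(i): $\Er f(\bx,\by)\le\overline{|\nabla{f}|}{}^{\diamond}(\bar x,\by)$; (3) since $F$ is convex near $(\bx,\by)$, apply Proposition~\ref{nc}(vii)(a) to get, for $(x,y)$ near $(\bx,\by)$ in $\gph F$ with $y\ne\by$, the pointwise equality $|\nabla f|_\rho^{\diamond}(x,y)=|\nabla f|_\rho(x,y)$, and combine with Proposition~\ref{P3} to write $|\nabla f|_\rho(x,y)=q(d(y,\by))^{q-1}|\nabla F|_\rho(x,y)$; (4) take $\rho$-limits and infima over outer points to obtain $\overline{|\nabla f|}{}^{\diamond}(\bar x,\by)=\overline{|\nabla F|}_q(\bar x,\by)$ (this is essentially the observation recorded after \eqref{aq-ss}, together with Proposition~\ref{nc}(vii)(b)); (5) bound $\overline{|\nabla F|}_q(\bar x,\by)\le q^{-1}\overline{|\sd F|}_q(\bar x,\by)$ --- the cleanest way is: in the convex case, Proposition~\ref{6P1}(ii) holds (one can always renorm $Y$ to be Fr\'echet smooth, or handle $q=1$ by the convex sum rule; for $q<1$ the function $g(v)=\|v-\by\|^q$ has $\sd g(v)=q\|v-\by\|^{q-1}J(v-\by)$ by the convex scalar chain rule applied to the nondecreasing concave... ), so $|\sd f|_\rho(x,y)=q\|y-\by\|^{q-1}|\sd F|_{\xi_q(y)\rho}(x,y)$, and then Proposition~\ref{nc}(vii)(b)'s full chain of equalities identifies $\overline{|\nabla f|}{}^{\diamond}(\bar x,\by)=\overline{|\sd f|}{}^{>}(\bar x,\by)$, which by Proposition~\ref{P5}(ii) equals $\overline{|\sd F|}_q(\bar x,\by)$ --- wait, that gives $\overline{|\nabla F|}_q=\overline{|\sd F|}_q$, not the factor $q$; (6) reconcile the factor: $\overline{|\nabla f|}{}^{\diamond}=\overline{|\nabla F|}_q$ \emph{already contains} a factor $q$ built into definition \eqref{q-ss}, whereas the claimed inequality has $q\cdot\sr_q\le\overline{|\sd F|}_q$; chasing \eqref{q-ss} and \eqref{q-sss} shows $\overline{|\nabla F|}_q=\overline{|\sd F|}_q$ in the convex case (Proposition~\ref{7P4}(vi) for $q=1$ and Proposition~\ref{7P4}(v)(b) when $Y$ is Fr\'echet smooth), and combining with step (2)--(4) gives $\sr_q[F](\bx,\by)\le\overline{|\nabla F|}_q(\bar x,\by)$; the extra factor $q$ in the statement is the price for dropping the Fr\'echet smoothness of $Y$ --- in a general normed space only Proposition~\ref{6P1}(i)-type estimates are available and they lose a factor $q$, so one multiplies through.

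**I expect the main obstacle** to be exactly this bookkeeping of the factor $q$ and the precise role of convexity when $q<1$: the function $f$ in \eqref{f} is \emph{not} convex for $q<1$ even if $F$ is, so Proposition~\ref{nc}(vii) cannot be applied verbatim, and one must instead argue directly that the convexity of $\gph F$ still forces $|\nabla f|_\rho^{\diamond}(x,y)$ to be controlled by the subdifferential slope up to a factor $q$. The honest route is: for convex $F$, show the supremum defining $|\nabla F|_\rho^{\diamond}$ in \eqref{7nls} is attained along segments in $\gph F$, hence equals the corresponding local directional slope, then relate that to $D^*F$ via the convex normal cone calculus; the subhomogeneity $((d(y,\by))^q-(d(v,\by))^q)_+\le q(d(y,\by))^{q-1}(d(y,\by)-d(v,\by))_+$ (concavity of $t\mapsto t^q$) is what produces the $q$. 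So the technical heart is this one-sided inequality for $t^q$ plus the convex-analytic fact that convexity of $\gph F$ lets us replace the global $\diamond$-slope by a local one; everything else is assembling Theorem~\ref{7T1}(i), Proposition~\ref{6P1}(ii) in the form available without smoothness, and the definitions \eqref{q-sss} of $\overline{|\sd F|}_q$.
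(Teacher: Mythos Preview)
Your proposal has a genuine gap precisely where you yourself flag it: for $q<1$ the function $f$ of \eqref{f} is \emph{not} convex (the outer function $t\mapsto t^{q}$ is concave), so Proposition~\ref{nc}(vii) does not apply, and neither does the convex case of Proposition~\ref{6P1}(ii) or Proposition~\ref{P5}(ii). Your ``honest route'' at the end --- replace the global $\diamond$-slope by a directional one along segments in $\gph F$, then use concavity of $t\mapsto t^{q}$ to extract the factor~$q$ --- is only a heuristic: the passage from the local primal slope to the coderivative slope $|\sd F|_{\xi_q(y)\rho}$ still needs a sum rule, and in a general normed space with $q<1$ you have none available (the function is neither convex nor differentiable, and the spaces are not assumed Asplund or Fr\'echet smooth). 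So the chain $\sr_q=\Er f\le\overline{|\nabla f|}^{\diamond}\to\overline{|\sd f|}^{>}\to\overline{|\sd F|}_q$ breaks at both arrows for $q<1$, and the proposal does not close the gap.

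The paper's proof is completely different and much more elementary: it bypasses the error-bound machinery entirely and argues directly from the definitions. Fix $\tau<\sr_q[F](\bx,\by)$ and $\ga<q$, take any $(x,y)\in\gph F$ near $(\bx,\by)$ with $x\notin F^{-1}(\by)$, any $v^*\in J(y-\by)$ and any $x^*\in D^*F(x,y)(v^*+\xi_q(y)\rho\B^*)$. H\"older subregularity supplies a point $u\in F^{-1}(\by)$ with $\tau\|x-u\|<\|y-\by\|^{q}$. Convexity of $\gph F$ means the Fr\'echet normal cone is the convex one, so the pair $(x^*,-v^*-\xi_q(y)\rho w^*)$ (with $\|w^*\|\le1$) satisfies the global inequality
\[
\langle x^*,u-x\rangle\le\langle v^*,\by-y\rangle+\xi_q(y)\rho\|y-\by\|=-(1-\xi_q(y)\rho)\|y-\by\|.
\]
Combining this with the subregularity estimate gives $q\|y-\by\|^{q-1}\|x^*\|>\ga\tau$ after a short computation, and taking infima and limits yields $\overline{|\sd F|}_q(\bx,\by)\ge q\,\sr_q[F](\bx,\by)$. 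The factor $q$ appears not from concavity of $t^{q}$ but simply from the algebra $q\|y-\by\|^{q-1}\cdot\|y-\by\|=q\|y-\by\|^{q}$ when one multiplies through; no sum rule, no smoothness, no Asplund assumption is needed --- only the global variational inequality that defines the convex normal cone.
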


\begin{proof}
If $\sr_q[F](\bar{x},\by)=0$, the conclusion is trivial.
Suppose $0<\tau<\sr_q[F](\bar{x},\by)$ and $0<\ga<q$.
Then, there exists a $\rho\in(0,q-\ga)$ such that
\begin{gather}\label{iti1}
\tau d(x,F^{-1}(\by))<\|y-\by\|^q, \quad \forall x\in B_\rho(\bx)\setminus F\iv(\by),\; y\in F(x).
\end{gather}
Choose an arbitrary $(x,y)\in\gph F$ with $\|x-\bx\|<\rho$, $\|y-\by\|<\rho$, $x\notin F\iv(\by)$; $v^*\in J(y-\by)$; and $x^*\in D^*F(x,y)(v^*+\xi(y)\rho\B^*)$.
By \eqref{iti1}, one can find a point $u\in F\iv(\by)$ such that
\begin{gather}\label{iti4}
\tau\|x-u\|<\|y-\by\|^q.
\end{gather}
By the convexity of $F$, the Fr\'echet normal cone to its graph coincides with the normal cone in the sense of convex analysis and consequently, it holds
\begin{gather*}
\langle x^*,u-x\rangle\le\langle v^*,\by-y\rangle +\xi(y)\rho\|y-\by\|=-(1-\xi(y)\rho)\|y-\by\|.
\end{gather*}
Combining this with \eqref{iti4}, we have
\begin{align*}
q\|y-\by\|^{q-1}\|x^*\|\|u-x\| &\ge-q\|y-\by\|^{q-1}\langle x^*,u-x\rangle
\\
&\ge(q\|y-\by\|^{q-1}-\rho)\|y-\by\|
\\
&>q\|y-\by\|^q -(q-\ga)\|y-\by\|
\\
&\ge q\|y-\by\|^q -(q-\ga)\|y-\by\|^q
\\
&=\ga\|y-\by\|^q >\ga\tau\|u-x\|.
\end{align*}
Hence,
\begin{align*}
q\|y-\by\|^{q-1}\|x^*\|>\ga\tau,
\end{align*}
and it follows from definitions \eqref{q-sss} and \eqref{6srs} that $\overline{|\sd{F}|}_q(\bar{x},\by)\ge\ga\tau$.
Passing to the limit in the last inequality as $\ga\to q$ and $\tau\to\sr_q[F](\bar{x},\by)$, we arrive at the claimed inequality.
\qed\end{proof}

The next two corollaries summarize quantitative and qualitative criteria of H\"older metric subregularity of order $q$.

\begin{corollary}[Quantitative criteria]\label{7C1.1}
Let $\ga>0$.
Consider the following conditions:
\renewcommand {\theenumi} {\alph{enumi}}
\begin{enumerate}
\item
$F$ is H\"older metrically subregular of order $q$ at $(\bx,\by)$ with some $\tau>0$;
\item
$\overline{|\nabla{F}|}{}_q^{\diamond}(\bar{x},\by)>\ga$,\\ i.e., for some $\rho>0$ and any $(x,y)\in\gph F$ with $x\notin F\iv(\by)$, $d(x,\bx)<\rho$, and $d(y,\by)<\rho$, it holds $|\nabla{F}|_{q,\rho}^{\diamond}(x,y)>\ga$, and consequently there is a $(u,v)\in\gph F$ such that
\begin{gather*}\label{7ree}
(d(y,\by))^q-(d(v,\by))^q>\ga d_\rho((u,v),(x,y));
\end{gather*}
\item
$\ds\liminf_{\substack{x\to\bx\\x\notin F\iv(\by),\,y\in F(x)}} \frac{(d(y,\by))^q}{d(x,\bx)}>\ga$;
\item
$\overline{|\nabla{F}|}{}_q(\bar{x},\by)>\ga$,\\
i.e., for some $\rho>0$ and any $(x,y)\in\gph F$ with $x\notin F\iv(\by)$, $d(x,\bx)<\rho$, and $d(y,\by)<\rho$, it holds $q(d(y,\by))^{q-1}|\nabla{F}|_{\rho}(x,y)>\ga$, and consequently, for any $\eps>0$, there is a $(u,v)\in\gph F\cap B_\eps(x,y)$ such that
\begin{gather}\label{7phiree}
q(d(y,\by))^{q-1}(d(y,\by)-d(v,\by))>\ga d_\rho((u,v),(x,y));
\end{gather}
\item
$\overline{|\nabla{F}|}{}^+_q(\bar{x},\by)>\ga$,\\
i.e., for some $\rho>0$ and any $(x,y)\in X\times Y$ with $x\notin F\iv(\by)$, $d(x,\bx)<\rho$, $d(y,\by)<\rho$, and $(d(y,\by))^q/d(x,\bx)\le\ga$, it holds $q(d(y,\by))^{q-1}|\nabla{F}|_{\rho}(x,y)>\ga$ and consequently, for any $\eps>0$, there is a $(u,v)\in\gph F\cap B_\eps(x,y)$ such that \eqref{7phiree} holds true;
\sloppy
\item
$X$ and $Y$ are normed spaces and
$\overline{|\sd{F}|}{}_q^a(\bar{x},\by)>\ga$,\\
i.e.,
for some $\rho>0$ and any $(x,y)\in\gph F$ with $x\notin F\iv(\by)$, $\|x-\bx\|<\rho$, and $\|y-\by\|<\rho$, it holds
\begin{gather}\label{7f0}
q\|y-\by\|^{q-1}|\sd{F}|^a_{\xi(y)\rho}(x,y)>\ga
\end{gather}
and consequently, there exists an $\eps>0$ such that
\begin{gather}\label{7f}
\begin{aligned}
q\|y-\by\|^{q-1}&\|x^*\|>\ga\;\;\mbox{for all }\\ &x^*\in D^*F(x,y)(J(B_\eps(y-\by))+\xi(y)\rho\B^*);
\end{aligned}
\end{gather}
\item
$X$ and $Y$ are normed spaces and
$\overline{|\sd{F}|}{}_q^{a+}(\bar{x},\by)>\ga$,\\
i.e., for some $\rho>0$ and any $(x,y)\in X\times Y$ with $x\notin F\iv(\by)$, $\|x-\bx\|<\rho$, $\|y-\by\|<\rho$, and $\|y-\by\|^q/\|x-\bx\|\le\ga$, condition \eqref{7f0} holds and consequently, there exists an $\eps>0$ such that \eqref{7f} holds true;
\item
$X$ and $Y$ are normed spaces and
$\overline{|\sd{F}|}{}_q(\bar{x},\by)>\ga$,\\
i.e.,
for some $\rho>0$ and any $(x,y)\in\gph F$ with $x\notin F\iv(\by)$, $\|x-\bx\|<\rho$, and $\|y-\by\|<\rho$, it holds
\begin{gather}\label{7f20}
q\|y-\by\|^{q-1}|\sd{F}|_{\xi(y)\rho}(x,y)>\ga
\end{gather}
and consequently,
\begin{gather}\label{7f2}
\begin{aligned}
q\|y-\by\|^{q-1}&\|x^*\|>\ga\;\;\mbox{for all }\\ &x^*\in D^*F(x,y)(J(y-\by)+\xi(y)\rho\B^*);
\end{aligned}
\end{gather}
\item
$X$ and $Y$ are normed spaces and
$\overline{|\sd{F}|}{}^+_q(\bar{x},\by)>\ga$,\\
i.e., for some $\rho>0$ and any $(x,y)\in X\times Y$ with $x\notin F\iv(\by)$, $\|x-\bx\|<\rho$, $\|y-\by\|<\rho$, and $\|y-\by\|^q/\|x-\bx\|\le\ga$, condition
\eqref{7f20} holds, and consequently \eqref{7f2} holds true;
\item
$X$ and $Y$ are finite dimensional normed spaces and
\begin{gather*}
\|x^*\|>\ga\quad\mbox{for all } x^*\in \overline{D}{}^{*>}_q F(\bx,\by) (\Sp^*_{Y^*}).
\end{gather*}
\end{enumerate}
\renewcommand {\theenumi} {\roman{enumi}}
The following implications hold true:
\begin{enumerate}
\item
{\rm (c) \folgt (e)},
{\rm (d) \folgt (e)},
{\rm (e) \folgt (b)},
{\rm (f) \folgt (g) \folgt (i)}, {\rm (f)~\folgt (h)~\folgt (i)};
\item
if $\ga<\tau$, then {\rm (a) \folgt (b)};
\item
if $\tau\le\ga$, $X$ and $Y$ are complete, and $\gph F$ is locally closed near $(\bar{x},\by)$, then {\rm (b) \folgt (a)}.
\cnta
\end{enumerate}
Suppose $X$ and $Y$ are normed spaces.
\begin{enumerate}
\cntb
\item
If $F$ is convex, and $\ga<q\tau$, then {\rm (a) \folgt (h)};
\item
{\rm (f)~\folgt (d)} and {\rm (g)~\folgt (e)}\\
provided that $X$ and $Y$ are Asplund and $\gph F$ is locally closed near $(\bx,\by)$;
\item
{\rm (h)~\iff (d)} and {\rm (i)~\iff (e)}\\
provided that
$Y$ is Fr\'echet smooth and one of the following conditions is satisfied:
\begin{enumerate}
\item
$X$ is Asplund and $\gph F$ is locally closed near $(\bar{x},\by)$;
\item
$F$ is convex near $(\bar{x},\by)$;
\end{enumerate}
\item
{\rm (b)~\iff (d)~\iff (e)~\iff (h)~\iff (i)}\\
provided that
$F$ is convex near $(\bar{x},\by)$ and $q=1$;
\item
if $X$ and $Y$ are finite dimensional normed spaces, then {\rm (f)~\iff (h)~\iff (j)}.
%and  {\rm (g)~\iff (i)}.
\end{enumerate}
\end{corollary}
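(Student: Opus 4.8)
The plan is to obtain the corollary as the set-valued reformulation of Corollary~\ref{3C1.1} applied to the extended-real-valued function $f$ given by \eqref{f}. As recorded in Section~\ref{S5}, for this $f$ the H\"older metric subregularity property \eqref{HMR} of $F$ is exactly the error bound property \eqref{eb} of $f$, the constant $\sr_q[F](\bx,\by)$ equals the error bound modulus \eqref{rr2}, and $S(f)=F\iv(\by)$. The first step is to set up the dictionary between the $q$-slopes of $F$ and the slopes of $f$: comparing \eqref{nls-f} and \eqref{uss-f} with \eqref{7nls} and \eqref{7uss} gives $|\nabla f|_\rho^{\diamond}(x,y)=|\nabla F|_{q,\rho}^{\diamond}(x,y)$ and $\overline{|\nabla f|}{}^{\diamond}(\bx,\by)=\overline{|\nabla F|}{}_q^{\diamond}(\bx,\by)$; since $f(x,y)=(d(y,\by))^q$ on $\gph F$, condition (c) is condition (c) of Corollary~\ref{3C1.1} (up to the routine observation that dropping the constraint $d(y,\by)<\rho$ does not change the limit, since a large $d(y,\by)$ makes the ratio large); Proposition~\ref{P3} identifies $\overline{|\nabla f|}{}^{>}(\bx,\by)$ with $\overline{|\nabla F|}{}_q(\bx,\by)$ and $\overline{|\nabla f|}{}^{>+}(\bx,\by)$ with $\overline{|\nabla F|}{}_q^{+}(\bx,\by)$ through \eqref{q-ss} and \eqref{aq-ss}; and Propositions~\ref{6P1}, \ref{P5} and \ref{7P3} relate $\overline{|\sd f|}{}^{>}(\bx,\by)$ and $\overline{|\sd f|}{}^{>+}(\bx,\by)$ to the strict subdifferential $q$-slopes of $F$ (an inequality in the Asplund case, equalities when $Y$ is Fr\'echet smooth, or in the convex case with $q=1$), simultaneously yielding the explicit $\eps$-reformulations \eqref{7f0}--\eqref{7f2} by unwinding \eqref{6srs}, \eqref{6asrs} and \eqref{sds-f}. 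Under this dictionary the ``i.e.''\ clauses in conditions (b)--(j) are mere restatements of definitions.

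With the dictionary in place the implications follow. Items (ii) and (iii) come from Theorem~\ref{7T1}: part (i) of that theorem gives $\sr_q[F](\bx,\by)\le\overline{|\nabla F|}{}_q^{\diamond}(\bx,\by)$, so (a) with constant $\tau>\ga$ forces (b); part (ii) upgrades this to equality when $X,Y$ are complete and $\gph F$ is locally closed, so (b) yields $\overline{|\nabla F|}{}_q^{\diamond}(\bx,\by)>0$ and hence (a). The relations among (b)--(j) listed in items (i), (v), (vi) and (vii) are precisely the slope inequalities of Proposition~\ref{7P4}: its part (ii) gives (e) \folgt (b) and (d) \folgt (e) (and, through the $f$-version of Corollary~\ref{3C1.1}, (c) \folgt (e)); its part (iii) gives the chains (f) \folgt (h) \folgt (i) and (f) \folgt (g) \folgt (i); its part (iv) gives (f) \folgt (d) and (g) \folgt (e) under the Asplund/local-closedness hypotheses; its part (v) gives (h) \iff (d) and (i) \iff (e) when $Y$ is Fr\'echet smooth and one of the two side conditions holds; and its part (vi), valid for $q=1$ and $F$ convex, collapses (b), (d), (e), (h), (i). Item (iv) of the corollary is Proposition~\ref{itu}: if $F$ is convex and H\"older metrically subregular of order $q$ with constant $\tau$, then $\overline{|\sd F|}_q(\bx,\by)\ge q\,\sr_q[F](\bx,\by)\ge q\tau>\ga$, which is (h). Item (viii) is Proposition~\ref{ows3}: in finite dimensions $\overline{|\sd F|}{}^a_q(\bx,\by)=\overline{|\sd F|}{}_q(\bx,\by)=\inf\{\|x^*\|\mid x^*\in\overline{D}{}^{*>}_qF(\bx,\by)(\Sp^*_{Y^*})\}$, and since $\Sp^*_{Y^*}$ is compact and $\gph\overline{D}{}^{*>}_qF(\bx,\by)$ is closed this infimum is attained, so the three ``$>\ga$'' conditions in (f), (h) and (j) coincide.

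The one place calling for care, rather than a genuine obstacle, is the bookkeeping of the scaling factors $q\|y-\by\|^{q-1}$ and $\xi_q(y)=\|y-\by\|^{1-q}/q$ that appear when passing between $|\sd f|_\rho(x,y)$ and $|\sd F|_{\xi_q(y)\rho}(x,y)$ in Proposition~\ref{6P1}, together with the verification in Proposition~\ref{7P3} that in the \emph{strict} subdifferential $q$-slopes the radius $\xi_q(y)\rho$ may be replaced by the plain $\rho$ without affecting positivity of the limit for $q<1$. This is exactly why the thresholds in (f)--(i) are stated with radius $\xi_q(y)\rho$ while radius $\rho$ already suffices in the stated consequences, and why the two coincide when $q=1$. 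Once these scalings are tracked, the corollary is Corollary~\ref{3C1.1} transported through Propositions~\ref{P3}, \ref{6P1}, \ref{P5}, \ref{7P3} and \ref{7P4}, with Theorem~\ref{7T1} supplying items (ii) and (iii), Proposition~\ref{itu} item (iv), and Proposition~\ref{ows3} item (viii).
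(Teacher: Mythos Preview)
Your proposal is correct and matches the paper's intended approach: the paper does not spell out a proof of this corollary, presenting it as a direct assembly of Theorem~\ref{7T1}, Propositions~\ref{P3}, \ref{6P1}, \ref{P5}, \ref{7P3}, \ref{7P4}, \ref{itu}, and \ref{ows3} via the identification of the H\"older metric subregularity of $F$ with the error bound property of $f$ from \eqref{f}. Your dictionary and the attribution of each item to the corresponding proposition are accurate, including the care you take with the $\xi_q(y)\rho$ scaling and the attainment argument for item~(viii).
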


The conclusions of Corollary~\ref{7C1.1} are illustrated in
Fig.~\ref{fig.9}.

\begin{figure}[!htb]
$$\xymatrix{
&&&&{\rm (c)}\ar[d]
\\
&&{\rm (d)}\ar[rr]
\ar @{} [drr] |{\substack{X,Y\,{\rm Asplund}\\\gph F\, {\rm closed}}}
&&{\rm (e)}\ar[rr]
\ar@/_/@{-->}[ll]_{F\,{\rm convex},\,q=1}
\ar@/^1pc/@{-->}[dd]
&&{\rm (b)} \ar@/_/@{-->}[rr]_{\substack{\tau\le\ga\\X,Y\, {\rm complete}\\\gph F\, {\rm closed}}}
\ar@/_/@{-->}[ll]_{F\,{\rm convex},\,q=1}
&&{\rm (a)} \ar@/_/@{-->}[ll]_{\ga<\tau} \ar@/^5pc/@{-->}[lllllldd]|{\substack{X,Y\,{\rm normed}\\F\,{\rm convex}\\\ga<q\tau}}
\\
{\rm (j)}\ar@{-->}[rr]_(.65){\substack{\dim X<\infty\\\dim Y<\infty}}
\ar@{-->}[rrd]
&&{\rm (f)}\ar[rr]
\ar@{-->}[u]
\ar[d]
\ar@{-->}[ll]
&&{\rm (g)}
\ar@{-->}[u]
\ar[d]
\\
&&{\rm (h)}\ar[rr]\ar@{-->}[llu]\ar@/^/@{-->}[u]
&&{\rm (i)}
\ar@/_1pc/@{-->}[uu]_{\substack{[Y\,{\rm smooth}\smallskip\\(X\, {\rm Asplund}\\\gph F\, {\rm closed})\\{\rm or}\\ F\,{\rm convex}]\\{\rm or}\smallskip\\ F\,{\rm convex},\,q=1}}
\ar@/^/@{-->}[ll]^(.35){F\,{\rm convex},q=1}
}$$
\caption{Corollary~\ref{7C1.1} \label{fig.9}}
\end{figure}

\begin{corollary}[Qualitative criteria]\label{C10}
Suppose $X$ and $Y$ are complete metric spaces and $\gph F$ is locally closed near $(\bar{x},\by)$.
Then,
$F$ is H\"older metrically subregular of order $q$ at $(\bx,\by)$ provided that one of the following conditions holds true:
\renewcommand {\theenumi} {\alph{enumi}}
\begin{enumerate}
\item
$\overline{|\nabla{F}|}{}^{\diamond}_q(\bar{x},\by)>0$; \item
$\ds\liminf_{\substack{x\to\bx\\x\notin F\iv(\by),\,y\in F(x)}} \frac{(d(y,\by))^q}{d(x,\bx)}>0$;
\item
$\overline{|\nabla{F}|}{}_q(\bar{x},\by)>0$;
\item
$\overline{|\nabla{F}|}{}^+_q(\bar{x},\by)>0$,
or equivalently,
$$\ds\lim_{\rho\downarrow0}
\inf_{\substack{d(x,\bx)<\rho,\,d(y,\by)<\rho,\,\frac{(d(y,\by))^q}{d(x,\bx)}<\rho\\
(x,y)\in\gph F,\,x\notin F\iv(\by)}}\,
(d(y,\by))^{q-1}|\nabla{F}|_{\rho}(x,y)>0.$$
\cnta
\end{enumerate}
If $X$ and $Y$ are Asplund spaces, then the following conditions are also sufficient:
\begin{enumerate}
\cntb
\item
$\overline{|\sd{F}|}{}^a_q(\bar{x},\by)>0$;
\item
$\overline{|\sd{F}|}{}^{a+}_q(\bar{x},\by)>0$, or equivalently,
$$\ds\lim_{\rho\downarrow0}
\inf_{\substack{\|x-\bx\|<\rho,\,\|y-\by\|<\rho,\,\frac{\|y-\by\|^q}{\|x-\bx\|}<\rho\\
(x,y)\in\gph F,\,x\notin F\iv(\by)}}\,
\|y-\by\|^{q-1}|\sd{F}|_{\xi(y)\rho}^a(x,y)>0.$$
\cnta
\end{enumerate}
The next two conditions:
\begin{enumerate}
\cntb
\item
$\overline{|\sd{F}|}{}_q(\bar{x},\by)>0$;
\item
$\overline{|\sd{F}|}{}^+_q(\bar{x},\by)>0$, or equivalently,
\begin{gather*}\label{exc3}
\ds\lim_{\rho\downarrow0}
\inf_{\substack{\|x-\bx\|<\rho,\,\|y-\by\|<\rho,\,\frac{\|y-\by\|^q}{\|x-\bx\|}<\rho\\
(x,y)\in\gph F,\,x\notin F\iv(\by)}}\,
\|y-\by\|^{q-1}|\sd{F}|_{\xi(y)\rho}(x,y)>0,
\end{gather*}
\cnta
\end{enumerate}
are sufficient provided that $X$ and $Y$ are Banach spaces and one of the following conditions is satisfied:
\begin{itemize}
\item
$X$ is Asplund and $Y$ is Fr\'echet smooth,
\item
$F$ is convex near $(\bar{x},\by)$ and either $Y$ is Fr\'echet smooth or $q=1$.
\end{itemize}
If $X$ and $Y$ are finite dimensional normed spaces, then the following condition is also sufficient:
\begin{enumerate}
\cntb
\item
$0\notin \overline{D}{}^{*>}_q F(\bx,\by) (\Sp^*_{Y^*})$.
\end{enumerate}
\renewcommand {\theenumi} {\roman{enumi}}
Moreover,
\begin{enumerate}
\item
condition {\rm (a)} is also necessary for the metric $q$-subregularity of $F$ at $(\bx,\by)$;
\item
{\rm (b) \folgt (d)},
{\rm (c) \folgt (d)},
{\rm (d) \folgt (a)},
{\rm (e) \folgt (f)},
{\rm (g) \folgt (h)}.
\cnta
\end{enumerate}
Suppose $X$ and $Y$ are Banach spaces.
\begin{enumerate}
\cntb
\item
If $X$ and $Y$ are Asplund, then {\rm (e) \folgt (c)} and {\rm (f) \folgt (d)};
\item
if $Y$ is Fr\'echet smooth and either $X$ is Asplund or $F$ is convex near $(\bar{x},\by)$, then {\rm (e) \iff (c)} and {\rm (f) \iff (d)};
\item
if $F$ is convex near $(\bx,\by)$, then
condition {\rm (g)} is also necessary for the metric $q$-sub\-regularity of $F$ at $(\bx,\by)$;
\item
if $F$ is convex near $(\bx,\by)$ and $q=1$, then {\rm (a) \iff (c) \iff (d) \iff \rm (g) \iff (h)};
\item
if $X$ and $Y$ are finite dimensional normed spaces, then
{\rm (e) \iff (g) \iff (i)}.
\end{enumerate}
\end{corollary}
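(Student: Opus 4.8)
The plan is to derive the entire statement from Theorem~\ref{7T1} and Propositions~\ref{7P3}, \ref{7P4}, \ref{ows3} (plus Proposition~\ref{itu} for one necessity claim). Recall from Section~\ref{S5} that property \eqref{HMR} for $F$ is exactly the error bound property \eqref{eb} for the function $f$ of \eqref{f}, that $S(f)=F\iv(\by)$ and $\Er f(\bx,\by)=\sr_q[F](\bx,\by)$, that $f_+=f$ is \lsc\ near $(\bx,\by)$ if and only if $\gph F$ is locally closed there, and that the slopes of $f$ identify with the $q$-slopes of $F$ through \eqref{7nls} and \eqref{7uss}, Proposition~\ref{P3}, and Proposition~\ref{P5}. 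The backbone is Theorem~\ref{7T1}: part (i) gives $\sr_q[F](\bx,\by)\le\overline{|\nabla{F}|}{}^{\diamond}_q(\bx,\by)$ with no hypotheses, so (a) is necessary for $q$-subregularity; part (ii), available under the standing completeness and local-closedness assumptions, upgrades this to an equality, so (a) is also sufficient.

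Next I would reduce the other primal conditions to (a) by Proposition~\ref{7P4}(ii): since $\overline{|\nabla{F}|}{}^{\diamond}_q\ge\overline{|\nabla{F}|}{}^{+}_q\ge\overline{|\nabla{F}|}{}_q$, conditions (c) and (d) imply (a); and (b) implies (d) because the inner maximum in \eqref{aq-ss} dominates $(d(y,\by))^q/d(x,\bx)$, while along any sequence realizing the $\liminf$ in (b) one necessarily has $y\to\by$ (as $d(x,\bx)\to0$ forces $(d(y,\by))^q\to0$), so that this $\liminf$ bounds $\overline{|\nabla{F}|}{}^{+}_q(\bx,\by)$ from below. For the subdifferential conditions: when $X,Y$ are Asplund (hence complete) and $\gph F$ is locally closed, Proposition~\ref{7P4}(iv) yields $\overline{|\nabla{F}|}{}_q\ge\overline{|\sd{F}|}{}^{a}_q$ and $\overline{|\nabla{F}|}{}^{+}_q\ge\overline{|\sd{F}|}{}^{a+}_q$, whence (e)$\Rightarrow$(c) and (f)$\Rightarrow$(d), so (e), (f) are sufficient. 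When $Y$ is Fr\'echet smooth and in addition $X$ is Asplund (with $\gph F$ locally closed) or $F$ is convex, Proposition~\ref{7P4}(v) promotes these inequalities to equalities with $\overline{|\sd{F}|}{}_q$, $\overline{|\sd{F}|}{}^{+}_q$; and when $F$ is convex with $q=1$, Proposition~\ref{7P4}(vi) collapses $\overline{|\nabla{F}|}{}^{\diamond}_1$, $\overline{|\nabla{F}|}{}^{+}_1$, $\overline{|\nabla{F}|}{}_1$, $\overline{|\sd{F}|}{}^{+}_1$ and $\overline{|\sd{F}|}{}_1$ to a single number. In either case (g), (h) are sufficient and, under these hypotheses, equivalent to (c), (d). Finally, in finite dimensions Proposition~\ref{ows3} identifies the quantity in (i) with $\overline{|\sd{F}|}{}_q(\bx,\by)=\overline{|\sd{F}|}{}^{a}_q(\bx,\by)$, so (i) is sufficient and equivalent to (e), (g) there.

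The remaining implications come out the same way: (e)$\Rightarrow$(f) and (g)$\Rightarrow$(h) are the inequalities $\overline{|\sd{F}|}{}^{a}_q\le\overline{|\sd{F}|}{}^{a+}_q$ and $\overline{|\sd{F}|}{}_q\le\overline{|\sd{F}|}{}^{+}_q$ of Proposition~\ref{7P4}(iii). For the equivalences (e)$\Leftrightarrow$(c) and (f)$\Leftrightarrow$(d) in the Fr\'echet-smooth case one also needs $\overline{|\sd{F}|}{}^{a}_q=\overline{|\sd{F}|}{}_q$ (and its modified analogue), which follows by comparing \eqref{6srs} and \eqref{6asrs}: smoothness of $Y$ makes the duality mapping single-valued and continuous near $y-\by$, so the $\liminf_{v\to y-\by}$ in \eqref{6asrs} is superfluous — the same observation used at the close of the proof of Proposition~\ref{ows3}. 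The chain (a)$\Leftrightarrow$(c)$\Leftrightarrow$(d)$\Leftrightarrow$(g)$\Leftrightarrow$(h) for convex $F$ with $q=1$ then follows from Proposition~\ref{7P4}(vi) and Theorem~\ref{7T1}, and the necessity of (g) for convex $F$ is precisely Proposition~\ref{itu}: $q\cdot\sr_q[F](\bx,\by)\le\overline{|\sd{F}|}{}_q(\bx,\by)$, the left side being positive as soon as $F$ is $q$-subregular. The ``equivalently'' reformulations displayed in (d), (f), (h) come from the monotonicity of the $\rho$-slopes in $\rho$.

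Since the argument is essentially a transcription, the principal obstacle is bookkeeping rather than any single hard estimate: correctly matching each of the nine conditions to its slope and keeping track of which completeness, Asplund, smoothness, or convexity hypothesis is active in each clause. The one genuinely delicate case is $q<1$ with $F$ convex, where $v\mapsto\|v-\by\|^q$ is \emph{not} convex yet \emph{is} Fr\'echet differentiable on $Y\setminus\{\by\}$; this is why, for convex $F$, one must route (g), (h) through Proposition~\ref{P5}(ii) and Proposition~\ref{7P4}(v) (valid when $Y$ is Fr\'echet smooth) or through the special $q=1$ identity of Proposition~\ref{7P4}(vi), rather than through a pure convexity argument. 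A secondary point is verifying that confining the various infima to points $x\notin F\iv(\by)$ does not change their values: the excluded points have $d(x,F\iv(\by))=0$ and contribute arbitrarily large nonlocal slopes (infinite in the limit when $q<1$), so they are irrelevant both to the error bound and to every slope in play.
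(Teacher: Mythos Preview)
Your proposal is correct and matches the paper's approach: the paper gives no explicit proof of Corollary~\ref{C10}, presenting it (together with Fig.~\ref{fig.10}) as an immediate consequence of Theorem~\ref{7T1}, Propositions~\ref{7P4} and \ref{ows3}, and Proposition~\ref{itu}, which is precisely the chain you assemble. Your identification of the one nontrivial step---that in the Fr\'echet smooth case the continuity of the duality mapping forces $\overline{|\sd{F}|}{}^{a}_q=\overline{|\sd{F}|}{}_q$, needed for (c)$\Rightarrow$(e) in part~(iv)---is exactly the ingredient the paper relies on implicitly (it is the same upper-semicontinuity argument invoked at the end of the proof of Proposition~\ref{ows3}).
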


The conclusions of Corollary~\ref{C10} are illustrated in
Fig.~\ref{fig.10}.

\begin{figure}[!htb]
$$\xymatrix@C=1cm{
&&*+[F]{\sr_q[F](\bx,\by)>0} \ar[d]
\\
&*+[F]{\ds\liminf_{\substack{x\to\bx\\x\notin F\iv(\by)\\ y\in F(x)}} \frac{(d(y,\by))^q}{d(x,\bx)}>0}\ar[dr]
&*+[F]{\overline{|\nabla{F}|}{}^{\diamond}_q(\bar{x},\by)>0}
\ar[u]
\ar@/^/@{-->}[d]^{F\,{\rm convex},\,q=1}
\ar
%@/^9pc/
@{-->}`r /30pt[d] `^rd/4pt[dd] `_l[l] `[l]|{\substack{X,Y\,{\rm Banach}\\F\,{\rm convex}}} [lddd]
\\
&*+[F]{\overline{|\nabla{F}|}_q(\bar{x},\by)>0}
\ar[r]
\ar@/^/@{-->}[d]
\ar@{} [dr] |{\substack{X\,{\rm Banach},\,Y\,{\rm smooth}\smallskip\\X\,{\rm Asplund}\,{\rm or}\,F\,{\rm convex}}}
&*+[F]{\overline{|\nabla{F}|}{}^{+}_q(\bar{x},\by)>0}
\ar[u]
\ar@/_/@{-->}[d]
\ar@/_/@{-->}[l]_{F\,{\rm convex},\,q=1}
\\
*+[F]{0\notin \overline{D}{}^{*>}_q F(\bx,\by) (\Sp^*_{Y^*})}
\ar@{-->}[r]
\ar@{-->} [dr]^(.65){\substack{\dim X<\infty\\\dim Y<\infty}} &*+[F]{\overline{|\sd{F}|}{}^a_q(\bar{x},\by)>0}
\ar[r]
\ar@{-->}[l]
\ar@/^/@{-->}[u]^{X,Y\,{\rm Asplund}}
\ar[d]
&*+[F]{\overline{|\sd{F}|}{}^{a+}_q(\bar{x},\by)>0}
\ar@/_/@{-->}[u]_{X,Y\,{\rm Asplund}}
\ar[d]
\\
&*+[F]{\overline{|\sd{F}|}_q(\bar{x},\by)>0}
\ar[r]
\ar@{-->}@/^/[u]
\ar@{-->}[ul]
&*+[F]{\overline{|\sd{F}|}{}^{+}_q(\bar{x},\by)>0}
\ar@/_4.5pc/@{-->}[uu]|{\substack{X,Y\,{\rm Banach}\\F\,{\rm convex}\\q=1}}
\ar@/^/@{-->}[l]^{F\,{\rm convex},\,q=1}
}$$
\caption{Corollary~\ref{C10}}\label{fig.10}
\end{figure}
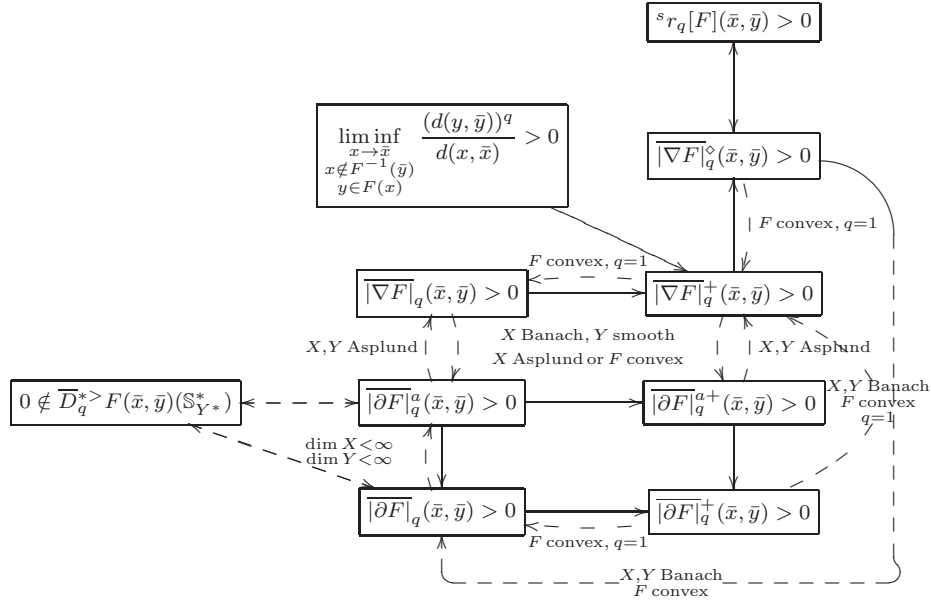

Another three subdifferential criteria of H\"older metric subregularity of order $q$ have been established recently by Li and Mordukhovich \cite{LiMor12} (cf. \cite{NgaiTronThe3}).
In the case when $Y$ is a Banach space, they introduced two modifications of the duality mapping $J$ defined by \eqref{J}, namely, the \emph{$q$-dua\-lity mapping} $J^q$ and, given an $\eps\ge0$, its \emph{normalized $\eps$-en\-largement} $J^q_\eps$ (both acting $Y\setminus\{0\}\rightrightarrows Y^*$):
\begin{gather}\notag
J^q(y):=q\|y\|^{q-1}J(y),\\\label{DuMap}
J^q_\eps(y):=\left\{\left.\frac{y^*+\eps v^*}{\|y^*+\eps v^*\|}\right|\, y^*\in J^q(y),\,\|v^*\|\le1,\,y^*+\eps v^*\ne0\right\}.
\end{gather}
Using \eqref{DuMap}, the authors defined for the mapping $F$ two nonnegative constants:
\begin{align}\notag
\al:=\sup_{\eps>0}\inf\bigl\{ &q\|x^*\|\cdot\|y'-\by\|^{q-1} \mid
(x,y)\in\gph F,\, x\notin F\iv(\by),\\\notag
&\|x-\bx\|<\eps,\, \|y-\by\|<\min\{\eps,\|x-\bx\|^\frac{1}{2}\},\\\label{alp}
&\|y'-y\|<\|x-\bx\|^\frac{1}{q},\,y'\ne\by,\, x^*\in D^*F(x,y)(J^q_\eps(y'-\by))\bigr\},
\\\notag
\be:=\sup_{\eps>0}\inf\bigl\{ &q\|x^*\|\cdot\|y-\by\|^{q-1} \mid
(x,y)\in\gph F,\, x\notin F\iv(\by),\\\notag
&\|x-\bx\|<\eps,\, \|y-\by\|<\min\{\eps,\|x-\bx\|^\frac{1}{2}\},\\\label{bet}
&x^*\in D^*F(x,y)(J^q_\eps(y-\by))\bigr\},
\end{align}
which played a crucial role when determining the H\"older metric subregularity of $F$.

Using the notation adopted in the current article and with $\al$ and $\be$ defined by \eqref{alp} and \eqref{bet}, respectively, \cite[Theorems~3.3, 5.1, and 5.3]{LiMor12} can be formulated in the following way.

\begin{theorem}\label{LM}
{\rm (i)}
Suppose $X$ and $Y$ are Asplund and $\gph F$ is locally closed near $(\bx,\by)$.
Then, 1)~$\al\le\sr_q[F](\bx,\by)$ and 2)~condition $\al>0$ is sufficient for H\"older metric subregularity of order $q$ of $F$ at $(\bx,\by)$.

{\rm (ii)}
Suppose $X$ is Asplund, $Y$ is Fr\'echet smooth, and $\gph F$ is locally closed near $(\bx,\by)$.
Then, 1)~$\be\le\sr_q[F](\bx,\by)$ and 2)~condition $\be>0$ is sufficient for H\"older metric subregularity of order $q$ of $F$ at $(\bx,\by)$.

{\rm (iii)}
Suppose $X$ is Banach, $Y$ is Fr\'echet smooth, and $\gph F$ is closed and convex.
Then, condition $\be>0$ is necessary and sufficient for H\"older metric subregularity of order $q$ of $F$ at $(\bx,\by)$.
\end{theorem}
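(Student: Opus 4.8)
The plan is to show that the Li--Mordukhovich constants $\al$ and $\be$ defined by \eqref{alp} and \eqref{bet} are squeezed inside the hierarchy of subdifferential $q$-slopes of Section~\ref{S5}, after which all three assertions follow from Theorem~\ref{7T1}, Propositions~\ref{7P4} and \ref{itu}, and Corollary~\ref{C10}. Concretely, the target is the pair of estimates
\begin{gather*}
\al\le\overline{|\sd{F}|}{}^{a+}_q(\bx,\by),\qquad \overline{|\sd{F}|}{}_q(\bx,\by)\le\be\le\overline{|\sd{F}|}{}^{+}_q(\bx,\by);
\end{gather*}
everything else is mechanical assembly.

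To establish these estimates one must reconcile three features of \eqref{alp}--\eqref{bet} with the definitions \eqref{q-sss}--\eqref{q-masss} and the simplifications of Proposition~\ref{7P3}. First, the normalized $q$-duality enlargement $J^q_\eps$ has to be traded for the perturbation $J(\cdot)+\rho\B^*$ used in \eqref{6srs}--\eqref{6asrs}: if $x^*\in D^*F(x,y)(J^q_\eps(y'-\by))$ and one writes the relevant element of $J^q_\eps(y'-\by)$ as $N\iv(q\|y'-\by\|^{q-1}\hat v^*+\eps v^*)$ with $\hat v^*\in J(y'-\by)$, $\|v^*\|\le1$ and $N=\|q\|y'-\by\|^{q-1}\hat v^*+\eps v^*\|$, then positive homogeneity of the Fr\'echet coderivative gives $(q\|y'-\by\|^{q-1})\iv N x^*\in D^*F(x,y)(\hat v^*+\eps\xi_q(y')v^*)\subset D^*F(x,y)(J(y'-\by)+\eps\xi_q(y')\B^*)$, and since $(q\|y'-\by\|^{q-1})\iv N=1+O(\eps\xi_q(y'))$ with $\eps\xi_q(y')\le\eps^{2-q}/q\to0$ uniformly over the admissible range of $y'$, multiplying back by $q\|y'-\by\|^{q-1}$ reproduces the $q$-weights in \eqref{q-sss}--\eqref{q-masss}. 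Second, the side restriction $\|y-\by\|<\min\{\eps,\|x-\bx\|^{1/2}\}$ in \eqref{alp}--\eqref{bet} plays the role of the weighting $\|y-\by\|^{q}/\|x-\bx\|$ built into the modified strict slopes \eqref{q-msss}, \eqref{q-masss}: every pair $(x,y)$ with $\|y-\by\|\ge\|x-\bx\|^{1/2}$ satisfies $\|y-\by\|^{q}/\|x-\bx\|>\|x-\bx\|^{q/2-1}\to\infty$, so such pairs contribute arbitrarily large values to the infimums defining the modified slopes and may be discarded there, while on the complementary set the modified-slope integrand dominates the $\al$- resp.\ $\be$-integrand. Third, the auxiliary variable $y'$ with $\|y'-y\|<\|x-\bx\|^{1/q}$, $y'\ne\by$ in \eqref{alp} realizes the inner $\liminf_{v\to y-\by}$ in \eqref{6asrs}, whereas the absence of such a perturbation in \eqref{bet} matches \eqref{6srs}; Proposition~\ref{7P3} lets one replace $\xi_q(y)\rho$ by $\rho$ and absorb the discrepancy between $\eps\xi_q(y')$ and $\xi_q(y)\rho$. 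Carrying all the nested $\sup$/$\inf$/$\lim$ to the limit — using that $\al$, $\be$ are suprema over $\eps>0$ of quantities monotone in $\eps$ while the strict slopes are limits as $\rho\downarrow0$ of quantities monotone in $\rho$ (cf.\ the remark following \eqref{mssds-f}) — yields the displayed estimates. This second step is where essentially all the care lies, and I expect the bookkeeping around $J^q_\eps$ and the three side constraints (in particular the uniform bound $\eps\xi_q(y')\le\eps^{2-q}/q$) to be the main obstacle; none of it is deep, but it is delicate.

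The assembly is then immediate. For (i): $X$, $Y$ Asplund (hence complete) and $\gph F$ locally closed give, by Theorem~\ref{7T1}(ii) and Proposition~\ref{7P4}(ii),(iv),
$$\sr_q[F](\bx,\by)=\overline{|\nabla{F}|}{}^{\diamond}_q(\bx,\by)\ge\overline{|\nabla{F}|}{}^{+}_q(\bx,\by)\ge\overline{|\sd{F}|}{}^{a+}_q(\bx,\by)\ge\al,$$
which is 1); and $\al>0$ forces $\overline{|\sd{F}|}{}^{a+}_q(\bx,\by)>0$, so Corollary~\ref{C10}(f) yields 2). For (ii): with $X$ Asplund, $Y$ Fr\'echet smooth and $\gph F$ locally closed, Proposition~\ref{7P4}(v) sharpens the last link to $\overline{|\nabla{F}|}{}^{+}_q(\bx,\by)=\overline{|\sd{F}|}{}^{+}_q(\bx,\by)\ge\be$, giving 1), while $\be>0$ gives $\overline{|\sd{F}|}{}^{+}_q(\bx,\by)>0$ and Corollary~\ref{C10}(h) (case $X$ Asplund, $Y$ Fr\'echet smooth) yields 2). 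For (iii): when $\gph F$ is closed and convex, sufficiency repeats the argument of (ii) but through the convex instance of Corollary~\ref{C10}(h), which needs only $F$ convex and $Y$ Fr\'echet smooth (not $X$ Asplund); for necessity, metric $q$-subregularity of $F$ means $\sr_q[F](\bx,\by)>0$, whence by Proposition~\ref{itu} $q\cdot\sr_q[F](\bx,\by)\le\overline{|\sd{F}|}{}_q(\bx,\by)\le\be$, so $\be>0$.
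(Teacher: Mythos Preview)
Your approach is essentially the paper's own: the key step is exactly the pair of estimates $\al\le\overline{|\sd{F}|}{}^{a+}_q(\bx,\by)$ and $\overline{|\sd{F}|}{}_q(\bx,\by)\le\be\le\overline{|\sd{F}|}{}^{+}_q(\bx,\by)$, which the paper packages as Proposition~\ref{LM2}, and the three reconciliation points you identify (positive homogeneity to convert $J^q_\eps$ into $J+\rho\B^*$ with the factor $q\|y'-\by\|^{q-1}$, the side constraint $\|y-\by\|<\|x-\bx\|^{1/2}$ absorbing the ratio $\|y-\by\|^q/\|x-\bx\|$, and the auxiliary $y'$ corresponding to the approximate slope) match the paper's calculations line by line. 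The final assembly via Theorem~\ref{7T1}(ii), Proposition~\ref{7P4}(ii),(iv),(v), Proposition~\ref{itu}, and Corollary~\ref{C10} is also the paper's route (the paper cites Proposition~\ref{7P4} directly rather than the derived Corollary~\ref{C10}, but the content is identical); your invocation of Proposition~\ref{7P3} is not actually needed, since the paper works directly with $\xi_q(y)\rho$ throughout.
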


The assertions of the above theorem are obvious consequences of Proposition~\ref{7P4} and Corollary~\ref{7C1.1} thanks to the next proposition.

\begin{proposition}\label{LM2}
Suppose $X$ and $Y$ are normed spaces.
Then,
\begin{enumerate}
\item
$\al\leq \overline{|\sd{F}|}{}^{+a}_q(\bar{x},\by)$,
\item
$\overline{|\sd{F}|}{}_q(\bar{x},\by) \le\be\leq\overline{|\sd{F}|}{}^{+}_q(\bar{x},\by)$.
\end{enumerate}
\end{proposition}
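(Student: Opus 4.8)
The plan is to derive all three inequalities from one scaling identity for the Fr\'echet coderivative together with a routine estimate of the normalisation built into the $q$-duality mapping $J^q_\eps$. The first thing I would record is that, since $D^*F(x,y)$ is positively homogeneous in its dual argument and $q\|y-\by\|^{q-1}\xi_q(y)=1$, one has, for every $(x,y)\in\gph F$ with $y\ne\by$ and every $\rho>0$,
\begin{gather*}
q\|y-\by\|^{q-1}\,D^*F(x,y)\bigl(J(y-\by)+\xi_q(y)\rho\B^*\bigr) =D^*F(x,y)\bigl(J^q(y-\by)+\rho\B^*\bigr),
\end{gather*}
and hence $q\|y-\by\|^{q-1}|\sd{F}|_{\xi_q(y)\rho}(x,y)=\inf\{\|x^*\|\mid x^*\in D^*F(x,y)(J^q(y-\by)+\rho\B^*)\}$, with the obvious variant (a $\liminf$ over $v\to y-\by$, with $J^q(v)$ in place of $J^q(y-\by)$) for the approximate slope $|\sd{F}|^a$. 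This rewrites \eqref{q-sss}--\eqref{q-masss} in a form directly comparable with \eqref{alp} and \eqref{bet}. The second ingredient I would need is the elementary two-sided comparison between $D^*F(x,y)(J^q_\eps(w))$ and $D^*F(x,y)(J^q(w)+\eps\B^*)$: if $x^*\in D^*F(x,y)\bigl((y^*+\eps v^*)/\|y^*+\eps v^*\|\bigr)$ with $y^*\in J^q(w)$ and $\|v^*\|\le1$, then positive homogeneity gives $\|y^*+\eps v^*\|\,x^*\in D^*F(x,y)(J^q(w)+\eps\B^*)$, while $\bigl|\,\|y^*+\eps v^*\|-q\|w\|^{q-1}\,\bigr|\le\eps$; conversely, every $z^*\in D^*F(x,y)(J^q(w)+\eps\B^*)$ is $\|y^*+u^*\|$ times an element of $D^*F(x,y)(J^q_\eps(w))$ as soon as $\|u^*\|\le\eps$. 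Thus moving between the two descriptions costs only the factor $q\|w\|^{q-1}/(q\|w\|^{q-1}\pm\eps)=1/(1\pm\eps\|w\|^{1-q}/q)$.

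With these in hand, for $\overline{|\sd{F}|}{}_q(\bar{x},\by)\le\be$ I would fix any $c<\overline{|\sd{F}|}{}_q(\bar{x},\by)$, choose (via the rewriting and the definition of $\overline{|\sd{F}|}{}_q$) a $\rho_0>0$ such that $\|z^*\|>c$ whenever $(x,y)\in\gph F$, $x\notin F\iv(\by)$, $\|x-\bx\|<\rho_0$, $\|y-\by\|<\rho_0$ and $z^*\in D^*F(x,y)(J^q(y-\by)+\rho_0\B^*)$, then pick $\eps\in(0,\rho_0)$ and apply this to an arbitrary point feasible for the infimum defining $\be$ in \eqref{bet}. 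Writing $x^*\in D^*F(x,y)\bigl((y^*+\eps v^*)/\|y^*+\eps v^*\|\bigr)$, the point $(x,y)$ lies in the neighbourhood above, $\|y^*+\eps v^*\|\,x^*$ lands in $D^*F(x,y)(J^q(y-\by)+\rho_0\B^*)$, and $\|y^*+\eps v^*\|\le q\|y-\by\|^{q-1}+\eps$, so with $\|y-\by\|<\eps$ one obtains $q\|x^*\|\,\|y-\by\|^{q-1}>c/(1+\eps\|y-\by\|^{1-q}/q)\ge c/(1+\sigma(\eps))$ with $\sigma(\eps)\to0$ as $\eps\downarrow0$. Hence the infimum in \eqref{bet} is $\ge c/(1+\sigma(\eps))$, so $\be\ge c$ after letting $\eps\downarrow0$; as $c<\overline{|\sd{F}|}{}_q$ was arbitrary, $\overline{|\sd{F}|}{}_q\le\be$.

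For $\be\le\overline{|\sd{F}|}{}^+_q(\bar{x},\by)$ I would run the same machine in reverse: fix $\eps>0$ and $c>\overline{|\sd{F}|}{}^+_q$, and for every small $\rho$ extract from the definition of the modified strict slope a point $(x,y)\in\gph F$, $x\notin F\iv(\by)$, $\|x-\bx\|<\rho$, $\|y-\by\|<\rho$, with both $q\|y-\by\|^{q-1}|\sd{F}|_{\xi_q(y)\rho}(x,y)<c$ and $\|y-\by\|^q/\|x-\bx\|<c$. The second inequality forces $\|y-\by\|<\|x-\bx\|^{1/2}$ once $\rho$ is small, so $(x,y)$ meets the neighbourhood constraints of \eqref{bet}; from the first, together with the rewriting, I get $z^*\in D^*F(x,y)(J^q(y-\by)+\rho\B^*)$ with $\|z^*\|<c$, and normalising the corresponding element of $J^q(y-\by)+\rho\B^*$ (legitimate since $\rho\le\eps$ eventually and $q\|y-\by\|^{q-1}-\rho>0$) produces $x^*\in D^*F(x,y)(J^q_\eps(y-\by))$ with $q\|x^*\|\,\|y-\by\|^{q-1}<c/(1-\sigma(\rho))$. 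Thus the infimum in \eqref{bet} is $\le c$ after $\rho\downarrow0$, hence $\be\le c$ after taking $\sup_\eps$, hence $\be\le\overline{|\sd{F}|}{}^+_q$ after $c\downarrow\overline{|\sd{F}|}{}^+_q$. The inequality $\al\le\overline{|\sd{F}|}{}^{a+}_q(\bar{x},\by)$ is obtained in exactly the same way, the only new feature being the auxiliary point $y'$ in \eqref{alp}: it is supplied by the $\liminf_{v\to y-\by}$ in the definition of $|\sd{F}|^a$ (take $v=y'-\by$ with $\|y'-y\|<\|x-\bx\|^{1/q}$), and the gap between the weights $\|y'-\by\|^{q-1}$ and $\|y-\by\|^{q-1}$ is swept into the factors tending to $1$ by continuity of $t\mapsto t^{q-1}$ at $\|y-\by\|>0$.

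The step I expect to be genuinely delicate --- everything else being radius bookkeeping --- is controlling the normalisation against the weight $\|y-\by\|^{q-1}$: because $q\le1$ makes $\|y-\by\|^{q-1}$ blow up as $y\to\by$, the $\eps\B^*$ (respectively $\rho\B^*$) perturbation is negligible only after one verifies that $\eps\|y-\by\|^{1-q}/q\to0$, which is precisely why the Li--Mordukhovich definitions tie $\|y-\by\|$ to $\eps$, and why the extra term $\|y-\by\|^q/\|x-\bx\|$ under the maximum in the modified slopes is exactly what is needed to keep $\|y-\by\|$ small relative to $\|x-\bx\|$ when converting a point of the modified-slope infimum into a point admissible in \eqref{bet} or \eqref{alp}.
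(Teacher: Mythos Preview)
Your proposal is correct and follows essentially the same route as the paper. Both proofs rest on the same three ingredients: positive homogeneity of $D^*F(x,y)$ together with $q\|y-\by\|^{q-1}\xi_q(y)=1$ to pass between $J$ and $J^q$; the two-sided comparison between $D^*F(x,y)(J^q_\eps(w))$ and $D^*F(x,y)(J^q(w)+\eps\B^*)$ up to the factor $q\|w\|^{q-1}\pm\eps$; and the observation that $\|y-\by\|^q/\|x-\bx\|<c$ forces $\|y-\by\|<\|x-\bx\|^{1/2}$ once $\|x-\bx\|<c^{-1/(1-q/2)}$. Your presentation isolates these ingredients up front, whereas the paper weaves them into each inequality separately; the content is the same.

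The one place where you are looser than the paper is the handling of the auxiliary point $y'$ in the inequality $\al\le\overline{|\sd F|}{}^{a+}_q$. You say the mismatch between $\|y'-\by\|^{q-1}$ and $\|y-\by\|^{q-1}$ is ``swept into factors tending to $1$ by continuity''. This is true, but since $t\mapsto t^{q-1}$ is not uniformly continuous near $0$ and both $y$ and $y'$ tend to $\by$, it deserves a word: the paper controls this by choosing $\eps$ (the bound on $\|y'-y\|$ coming from the $\liminf$) small relative to $\|y-\by\|$, so that $1-\rho<(\|y'-\by\|/\|y-\by\|)^{q-1}<2$, and then absorbing the extra factor into the $J^q_{2\rho}$ perturbation. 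Since the $\liminf$ allows $\|y'-y\|$ to be taken as small as one likes for fixed $(x,y)$, this is legitimate, and your sketch can be completed in the same way.
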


\begin{proof}
(i) If $\overline{|\sd{F}|}{}^{+a}_q(\bar{x},\by)=\infty$, the inequality is satisfied trivially.
Suppose
$\overline{|\sd{F}|}{}^{+a}_q(\bar{x},\by)< \ga<\infty$.
Then, by definition \eqref{q-masss},
$$\overline{|\sd{F}|}{}_q^{+a}(\bar{x},\by)\ge \lim_{\rho\downarrow0}
\inf_{\substack{\|x-\bx\|<\rho,\,\|y-\by\|<\rho\\
\|y-\by\|^q/\|x-\bx\|<\ga\\
(x,y)\in\gph F,\,x\notin F\iv(\by)}}
q\|y-\by\|^{q-1} |\sd{F}|_{\xi(y)\rho}^a(x,y).$$
If $0<\rho< \ga^{-\frac{1}{1-q/2}}$, $0<\|x-\bx\|<\rho$, and $\|y-\by\|^q/\|x-\bx\|<\ga$, then,
$$\|y-\by\|<(\ga\|x-\bx\|)^{\frac{1}{q}}= \|x-\bx\|^{\frac{1}{2}} \left(\ga\|x-\bx\|^{1-\frac{q}{2}}\right)^{\frac{1}{q}}< \|x-\bx\|^{\frac{1}{2}}.$$
Hence,
\begin{gather}\label{LM2-1}
\overline{|\sd{F}|}{}_q^{+a}(\bar{x},\by)\ge \lim_{\rho\downarrow0}
\inf_{\substack{\|x-\bx\|<\rho,\,\|y-\by\|<\rho\\
\|y-\by\|<\|x-\bx\|^{\frac{1}{2}}\\
(x,y)\in\gph F,\,x\notin F\iv(\by)}}
q\|y-\by\|^{q-1} |\sd{F}|_{\xi(y)\rho}^a(x,y).
\end{gather}
If $(x,y)\in\gph F$, $y\ne\by$, and $\eps\in(0,\|y-\by\|)$, then, by definition \eqref{6asrs}, it holds
\begin{align*}
|\sd{F}|_{\xi(y)\rho}^a(x,y)&\ge \inf_{\substack{\|v-(y-\by)\|<\eps\\ x^*\in D^*F(x,y)(J(v)+\xi(y)\rho\B^*)}}
\|x^*\|
\\&=
\inf_{\substack{\|y'-y\|<\eps\\ x^*\in D^*F(x,y)(J(y'-\by)+\xi(y)\rho\B^*)}}
\|x^*\|
\\&=
\inf_{\substack{\|y'-y\|<\eps\\ x^*\in D^*F(x,y)(y^*+\xi(y)\rho v^*)\\
y^*\in J(y'-\by),\,\|v^*\|\le1}}
\|x^*\|.
\end{align*}
Let $\rho\in(0,1)$, $(x,y)\in\gph F$, $x\ne\bx$, and $y\ne\by$ be such that $\|y-\by\|^{1-q}\le q$.
Choose an $\eps\in(0,\|y-\by\|)$ such that inequality $\|y'-y\|<\eps$ implies the following estimates:
\begin{gather}\label{LM2-2}
\|y'-y\|<\|x-\bx\|^{\frac{1}{q}},\quad 1-\rho<(\|y'-\by\|/\|y-\by\|)^{q-1}<2.
\end{gather}
Then, given any $y'\in Y$ with $\|y'-y\|<\eps$, $y^*\in J(y'-\by)$, and
$\|v^*\|\le1$, we have
\begin{gather*}
\|y^*+\xi(y)\rho v^*\|>1-\rho>0,
\\
\rho':=\rho(\|y'-\by\|/\|y-\by\|)^{q-1}<2\rho, \\
y^*+\xi(y)\rho v^*=
y^*+\frac{\|y-\by\|^{1-q}}{q}\rho v^*= \frac{\|y'-\by\|^{1-q}}{q} (q\|y'-\by\|^{q-1}y^*+\rho' v^*),
\\
\frac{y^*+\xi(y)\rho v^*}{\|y^*+\xi(y)\rho v^*\|}=\frac{q\|y'-\by\|^{q-1}y^*+\rho' v^*}{\|q\|y'-\by\|^{q-1}y^*+\rho' v^*\|}\in J_{\rho'}^q(y'-\by)\subset J_{2\rho}^q(y'-\by).
\end{gather*}
Hence,
\begin{align*}
D^*F(x,y)(y^*+\xi(y)\rho v^*)&= \|y^*+\xi(y)\rho v^*\|D^*F(x,y) \left(\frac{y^*+\xi(y)\rho v^*}{\|y^*+\xi(y)\rho v^*\|}\right)
\\&\subset
\|y^*+\xi(y)\rho v^*\|D^*F(x,y) \left(J_{2\rho}^q(y'-\by)\right),
\end{align*}
and consequently,
\begin{align*}
|\sd{F}|_{\xi(y)\rho}^a(x,y)&\ge(1-\rho) \inf_{\substack{\|y'-y\|<\|x-\bx\|^{\frac{1}{q}},\, y'\ne\by\\ x^*\in D^*F(x,y)\left(J_{2\rho}^q(y'-\by)\right)}}
\|x^*\|.
\end{align*}
Combining this estimate with \eqref{LM2-1} and making use of \eqref{LM2-2}, we obtain
\begin{align*}
\overline{|\sd{F}|}{}^{+a}_q(\bar{x},\by)
&\ge \lim_{\rho\downarrow0}(1-\rho)^2
&\inf_{\substack{
\|x-\bx\|<\rho,\,\|y-\by\|<\rho\\
\|y-\by\|<\|x-\bx\|^{\frac{1}{2}}\\
(x,y)\in\gph F,\,x\notin F\iv(\by)}}\,
&q\|y'-\by\|^{q-1} 
\\&&&
\inf_{\substack{\|y'-y\|<\|x-\bx\|^{\frac{1}{q}},\, y'\ne\by\\ x^*\in D^*F(x,y)\left(J_{2\rho}^q(y'-\by)\right)}}
\|x^*\|\\
&\ge \lim_{\rho\downarrow0}
&\inf_{\substack{
\|x-\bx\|<2\rho,\,\|y-\by\|<2\rho\\
\|y-\by\|<\|x-\bx\|^{\frac{1}{2}}\\
(x,y)\in\gph F,\,x\notin F\iv(\by)}}\,
&q\|y'-\by\|^{q-1} 
\\&&&
\inf_{\substack{\|y'-y\|<\|x-\bx\|^{\frac{1}{q}},\, y'\ne\by\\ x^*\in D^*F(x,y)\left(J_{2\rho}^q(y'-\by)\right)}}
\|x^*\|=\al.
\end{align*}

(ii) As in the proof of (i),
we have from definition \eqref{q-msss}
\begin{align*}\label{LM2-3}
\overline{|\sd{F}|}{}_q^{+}(\bar{x},\by)&\ge \lim_{\rho\downarrow0}
\inf_{\substack{\|x-\bx\|<\rho,\,\|y-\by\|<\rho\\
\|y-\by\|<\|x-\bx\|^{\frac{1}{2}}\\
(x,y)\in\gph F,\,x\notin F\iv(\by)}}
q\|y-\by\|^{q-1} |\sd{F}|_{\xi(y)\rho}(x,y),
\end{align*}
where, by definition \eqref{6srs} and
using again (simplified versions of) the same arguments as in the proof of (i),
\begin{align*}
|\sd{F}|_{\xi(y)\rho}(x,y)= \inf_{\substack{x^*\in D^*F(x,y)(y^*+\xi(y)\rho v^*)\\
y^*\in J(y-\by),\,\|v^*\|\le1}}
\|x^*\|\ge(1-\rho) \inf_{\substack{x^*\in D^*F(x,y)\left(J_{\rho}^q(y-\by)\right)}}
\|x^*\|.
\end{align*}
The second inequality in (ii) follows from the last two estimates and definition \eqref{bet}.

Let $0<\eps<q^{\frac{1}{2-q}}$, $(x,y)\in\gph F$, $0<\|y-\by\|<\eps$, and $w^*\in J^q_\eps(y-\by)$, that is,
$$
w^*=\frac{q\|y-\by\|^{q-1}y^*+\eps v^*} {\bigl\|q\|y-\by\|^{q-1}y^*+\eps v^*\bigr\|}
$$
for some $y^*\in J(y-\by)$ and $v^*\in\B^*$. Observe that $\xi(y)=\|y-\by\|^{1-q}/q<\eps^{1-q}/q$,
\begin{gather*}
\bigl\|q\|y-\by\|^{q-1}y^*+\eps v^*\bigr\|\ge q\eps^{q-1}-\eps=\eps^{q-1}(q-\eps^{2-q})>0,
\\
q\|y-\by\|^{q-1}y^*+\eps v^*= q\|y-\by\|^{q-1}(y^*+\eps\xi(y) v^*).
\end{gather*}
Hence,
\begin{align*}
\|w^*-y^*\| &=\left\|\frac{y^*+\eps\xi(y) v^*} {\|y^*+\eps\xi(y)v^*\|}-y^*\right\|
=\frac{\bigl\|y^*(1-\|y^*+\eps\xi(y)v^*\|) +\eps\xi(y)v^*\bigr\|} {\|y^*+\eps\xi(y)v^*\|}
\\
&\le\frac{\bigl|1-\|y^*+\eps\xi(y)v^*\|\bigr| +\eps\xi(y)} {1-\eps\xi(y)}\le \frac{2\eps\xi(y)} {1-\eps\xi(y)}
\le\frac{2\eps} {1-\eps^{2-q}/q}\xi(y).
\end{align*}
Given a $\rho>0$, one can always choose an $\eps>0$ such that $2\eps/(1-\eps^{2-q}/q)<\rho$, and consequently $D^*F(x,y)(w^*)\subset D^*F(x,y)(y^*+\xi(y)\rho\B^*)$.
The first inequality follows from definitions \eqref{q-sss}, \eqref{6srs}, and \eqref{bet}.
\qed\end{proof}

\begin{proof}\emph{of Theorem~\ref{LM}}
The conclusions follow from Theorem~\ref{7T1}(ii) thanks to Propositions~\ref{7P4} and \ref{LM2}.

(i) requires parts (ii) and (iv) of Proposition~\ref{7P4} and part (i) of Proposition~\ref{LM2}.

(ii) requires parts (ii) and (v)(a) of Proposition~\ref{7P4} and part (ii) of Proposition~\ref{LM2}.

(iii) requires parts (ii) and (v)(b) of Proposition~\ref{7P4}, part (ii) of Proposition~\ref{LM2}, and, additionally, Proposition~\ref{itu}.
\qed\end{proof}

\begin{remark}
Assertions (i) and (ii) of Theorem~\ref{LM} are in general weaker than the corresponding ones in Corollary~\ref{7C1.1}.
They can be strengthened if inequality $\|y-\by\|<\min\{\eps,\|x-\bx\|^\frac{1}{2}\}$ in definitions \eqref{alp} and \eqref{bet} is replaced by a more restrictive (when $\|x-\bx\|<1$) one: $\|y-\by\|<\min\{\eps,\|x-\bx\|^\frac{1}{p}\}$ where $p$ can be any number in the interval $(q,2)$.
Proposition~\ref{LM2} remains true in this situation.
The only change required in its proof is replacing inequality $\rho<\ga^{-\frac{1}{1-q/2}}$ by the following one: $\rho<\ga^{-\frac{1}{1-q/p}}$.
\end{remark}

The next example illustrates the computation of the constants involved in the definition and characterizations of H\"older metric subregularity.

\begin{example}
Consider a mapping $F:\R\to\R$ given by
$$
F(x):=(x_+)^2=
\begin{cases}
x^2 & \text{if } x\ge0,\\
0 & \text{otherwise},
\end{cases}
$$
cf. \cite[Example~3.8]{LiMor12}.
It is obviously H\"older metrically subregular of order $q=1/2$ at $(0,0)$.
Note that $F\iv(0)=(-\infty,0]$ and, if $x>0$, then $d(x,F\iv(0))=x$ and $d(0,F(x))=x^2$.
This allows us to compute the modulus of H\"older metric subregularity \eqref{7CMR-g}:
\begin{gather*}
\sr_q[F](0,0)=
\liminf_{x\downarrow0} \frac{(x^2)^\frac{1}{2}}{x}=1.
\end{gather*}

This result can also be deduced from Theorem~\ref{7T1}(ii).
For that, one needs to compute the uniform strict $q$-slope \eqref{7uss}.
Let $x>0$, $y=x^2$, and $\rho>0$.
Then the nonlocal $(q,\rho)$-slope \eqref{7nls} of $F$ at $(x,y)$ takes the following form:
\begin{gather}\label{8nls}
|\nabla{F}|_{q,\rho}^{\diamond}(x,y)=
\sup_{u\ne x}
\frac{[x-u_+]_+}{\max\{|u-x|,\rho|u_+^2-x^2|\}}.
\end{gather}
If $u>x$, then $x-u_+=x-u<0$ and the expression under $\sup$ in the \RHS\ of \eqref{8nls} equals 0.
If $u\le0$, it equals $x/\max\{|u-x|,\rho x^2\}$ and is continuous and increasing on $(-\infty,0]$ as a function of $u$, attaining its maximum at $0$.
Hence,
\begin{gather*}
|\nabla{F}|_{q,\rho}^{\diamond}(x,y)=
\sup_{0\le u<x}
\frac{x-u}{\max\{x-u,\rho(x^2-u^2)\}}=
\frac{1}{\max\{1,\rho x\}},
\end{gather*}
and consequently $|\nabla{F}|_{q,\rho}^{\diamond}(x,y)=1$ if $\rho x\le1$.
It follows immediately from definition \eqref{7uss} that
$\overline{|\nabla{F}|}{}^{\diamond}_q(0,0)=1$.

H\"older metric subregularity of $F$ can also be established from the estimates in Proposition~\ref{7P4} after computing any of the local strict $q$-slopes \eqref{q-ss}, \eqref{aq-ss}, \eqref{q-sss}--\eqref{q-masss} which, in turn, depend on local $\rho$-slopes \eqref{srho}, \eqref{6srs}, and \eqref{6asrs}.
Observe that the last three constants do not depend on $q$.

Similarly to the above, for $x>0$, $y=x^2$, and $\rho>0$, one has
\begin{align*}
|\nabla{F}|_{\rho}(x,y)&=
\limsup_{u\to x,\,u\ne x}
\frac{[x^2-u^2]_+} {\max\{|u-x|,\rho|u^2-x^2|\}}\\ &=\lim_{u\uparrow x}
\frac{x+u} {\max\{1,\rho(x+u)\}} =\frac{2x} {\max\{1,2\rho x\}}
\end{align*}
and consequently, $|\nabla{F}|_{\rho}(x,y)=2x$ if $\rho x\le1/2$.
Mapping $F$ is differentiable and $D^*F(x,y)(y^*)=\{2xy^*\}$ for any $x\ge0$, $y=x^2$, and $y^*\in\R$.
Duality mapping \eqref{J} in $\R$ has a simple representation: $J(v)=1$ if $v>0$ and $J(v)=-1$ if $v<0$.
Thus,
\begin{gather*}
|\sd{F}|_{\rho}(x,y)=|\sd{F}|^a_{\rho}(x,y)
=\inf_{\substack{x^*\in D^*F(x,y)(1+\rho\B^*)}}
|x^*|=2x(1-\rho).
\end{gather*}

Observing that $(d(y,0))^{q-1}=1/x$ and $\xi(y)=2x$ in \eqref{q-sss}--\eqref{q-masss}, we arrive at
\begin{align*}
\overline{|\nabla{F}|}{}_q(\bar{x},\by) &=\overline{|\nabla{F}|}{}^+_q(\bar{x},\by)\\
&=\overline{|\sd{F}|}{}_q(\bar{x},\by) =\overline{|\sd{F}|}{}^+_q(\bar{x},\by) =\overline{|\sd{F}|}{}^a_q(\bar{x},\by) =\overline{|\sd{F}|}{}^{+a}_q(\bar{x},\by) =1.
\end{align*}
By Proposition~\ref{7P4}, this ensures that $F$ is H\"older metrically subregular of order $\frac{1}{2}$ at $(0,0)$ with modulus not less than 1.
\qedtr
\end{example}

%\section{Concluding remarks}

\begin{acknowledgements}
The author wishes to thank the two anonymous referees for the careful reading of the manuscript and helpful comments and suggestions.
\end{acknowledgements}

\bibliographystyle{spmpsci}
\bibliography{buch-kr,kruger,kr-tmp}
\end{document}